\newtheorem{theorem}{Theorem}[section]
\newtheorem{lem}[theorem]{Lemma}
\newtheorem{cor}[theorem]{Corollary}
\newtheorem{prop}[theorem]{Proposition}
\theoremstyle{definition}
\newtheorem{defn}[theorem]{Definition}
\theoremstyle{remark}
\newtheorem{rem}[theorem]{Remark}
\newtheorem{theorem*}{Theorem}
\newtheorem{assumption*}[theorem*]{Assumption}
\renewcommand{\ge}{\geqslant}
\renewcommand{\le}{\leqslant}
\numberwithin{equation}{section} 
\newcommand{\eps}{\varepsilon}
\newcommand{\dd}{\,\mathrm{d}}
\newcommand{\ddd}{\,\textrm{\dj}}
\newcommand{\q}{{{q}}}
\newcommand{\qq}{Q}
\newcommand{\td}{\tilde}
\newcommand{\vp}{\varphi}
\newcommand{\ie}{\emph{i.e.}}
\renewcommand{\div}{\mathrm{div}\,}     
\renewcommand{\Re}{\mathop{\mathrm{Re}}}
\renewcommand{\Im}{\mathop{\mathrm{Im}}}
\newcommand{\he}{\upsilon}
\newcommand{\nonl}{\Phi}
\newcommand{\me}{\textsf{m}}
\newcommand{\1}{\mathsf{1}}
\newcommand{\uf}{u}
\newcommand{\ubb}{u_{BB}}
\newcommand{\vf}{v}
\newcommand{\f}{f}
\newcommand{\g}{g}
\renewcommand{\t}{\theta}
\newcommand{\tauz}{z}
\newenvironment{pdeq}{ \left\{ \begin{aligned}}{\end{aligned}\right.}
\newcommand{\vpp}[1]{\ensuremath{\left(#1\right)}}
\newcommand{\calf}{{\mathcal F}}
\newcommand{\calh}{{\mathcal H}}
\newcommand{\call}{{\mathcal L}}
\newcommand{\calm}{{\mathcal M}}
\newcommand{\calo}{{\mathcal O}}
\newcommand{\calr}{{\mathcal R}}
\newcommand{\cals}{{\mathcal S}}
\newcommand{\R}{\mathbb{R}}
\newcommand{\Z}{\mathbb{Z}}
\newcommand{\C}{\mathbb{C}}
\newcommand{\N}{\mathbb{N}}
\DeclareMathOperator{\id}{\textsf{id}}
\DeclareMathOperator{\supp}{supp}
\DeclareMathOperator*{\esssup}{ess\,sup}
\DeclareMathOperator{\sgn}{sgn}
\newcommand{\diffop}{\call}
\newcommand{\set}[1]{\ensuremath{\{#1\}}}
\newcommand{\setc}[2]{\ensuremath{\{#1\ \lvert\ #2\}}}
\newcommand{\frex}{a}
\renewcommand{\d}{\delta}
\newcommand{\op}{\mathsf{op}\,}
\newcommand{\norm}[1]{\lVert#1\rVert}
\newcommand{\opnorm}[1]{{\lvert\kern-0.25ex\lvert\kern-0.25ex\lvert #1 \rvert\kern-0.25ex\rvert\kern-0.25ex\rvert}}
\newcommand{\WSR}[2]{W^{#1,#2}}
\newcommand{\DSR}[2]{\dot{W}^{#1,#2}} 
\newcommand{\WSRloc}[2]{W^{#1,#2}_{\mathrm{loc}}} 
\newcommand{\CR}[1]{C^{#1}}
\newcommand{\LR}[1]{L^{#1}}
\newcommand{\LRloc}[1]{L^{#1}_{\mathrm{loc}}} 
\newcommand{\CRi}{\CR \infty}
\newcommand{\CRci}{\CR \infty_c}
\newcommand{\tin}{\text{in }}
\newcommand{\ton}{\text{on }}
\newcommand{\tand}{\text{and }}
\newcommand{\newCCtr}[2][d]{
\newcounter{#2}\setcounter{#2}{0}
\expandafter\xdef\csname kyedtheconst#2\endcsname{#1}
}
\newcommand{\Cc}[2][nolabel]{
\stepcounter{#2}
\expandafter\ensuremath{\csname kyedtheconst#2\endcsname_{\arabic{#2}}}
\ifthenelse{\equal{#1}{nolabel}}
{}
{\expandafter\xdef\csname kyedconst#1\endcsname
{\expandafter\ensuremath{\csname kyedtheconst#2\endcsname_{\arabic{#2}}}}}
}
\newcommand{\Ccn}[2][nolabel]{
\expandafter\ensuremath{\csname kyedtheconst#2\endcsname}
\ifthenelse{\equal{#1}{nolabel}}
{}
{\expandafter\xdef\csname kyedconst#1\endcsname
{\expandafter\ensuremath{\csname kyedtheconst#2\endcsname}}}
}
\newcommand{\Cclast}[1]{
\expandafter\ensuremath{\csname kyedtheconst#1\endcsname_{\arabic{#1}}}
}
\newcommand{\Ccllast}[1]{
\addtocounter{#1}{-1}
\expandafter\ensuremath{\csname kyedtheconst#1\endcsname_{\arabic{#1}}}
\addtocounter{#1}{1}
}
\newcommand{\const}[1]{
\expandafter{\ifcsname kyedconst#1\endcsname
  \csname kyedconst#1\endcsname
\else
  \errmessage{Undefined Kyedconstant #1.}%
\fi}
}
\def\blfootnote{\xdef\@thefnmark{}\@footnotetext}
\theoremstyle{plain}
\newtheorem{thm}{Theorem}[section]
\newtheorem*{thm*}{Theorem}
\newtheorem{assumption}[thm]{Assumption}
\theoremstyle{remark}
\newtheorem{example}[thm]{Example}
\begin{document}                        


\title[Optimal regularity for nonlocal porous medium equations]{Optimal regularity in time and space\\for nonlocal porous medium type equations}


\author{Benjamin~Gess}
\address{Benjamin~Gess\newline
Max Planck Institute for Mathematics in the Sciences\newline Inselstr.~22, 04103 Leipzig, Germany\newline
and Fakult\"at f\"ur Mathematik, Universit\"at Bielefeld\newline Universit\"atstr. 25, 33615 Bielefeld, Germany }
\email{bgess@math.uni-bielefeld.de}
\author{Jonas~Sauer}
\address{Jonas~Sauer\newline
Friedrich-Schiller-Universit\"at Jena\newline Ernst-Abbe-Platz 2, 07737 Jena, Germany}
\email{jonas.sauer@uni-jena.de}


\begin{abstract} A broad class of possibly non-unique generalized kinetic solutions to hyperbolic-parabolic PDEs is introduced. Optimal regularity estimates in time and space for such solutions to nonlocal, and spatially inhomogeneous variants of the porous medium equation are shown in the scale of Sobolev spaces.
The optimality of these results is shown by comparison to the non-local Barenblatt solution.
The regularity results are used in order to obtain existence of generalized kinetic solutions.
\end{abstract}

\subjclass{35K59, 35B65, 35D30, 35K65, 47G20, 76SXX}

\date{\today}
\keywords{Porous medium equation, integro-differential operators, degenerate partial equations, entropy solutions, kinetic formulation, velocity averaging, regularity results.}

\maketitle   


\setcounter{tocdepth}{1}
\tableofcontents

\section{Introduction}\label{Int}
We prove optimal estimates on the time and space regularity in Sobolev spaces of solutions to equations of the form
\begin{align}\label{gen_sys}
 \begin{pdeq}
  \partial_t\uf+L\nonl(\uf) &= S && \tin (0,T)\times \R^d,\\
  \uf(0)&=\uf_0 && \tin \R^d,
 \end{pdeq}
\end{align}
where $u_0 \in L^1(\R^d)$, $S\in L^1((0,T)\times \R^d)$, $\nonl: \R\to\R$, $a\in (0,2)$, and $L$ is a linear integro-differential operator given by
\begin{align}\label{def_op}
L\uf(x):=-\int_{\R^n} \big(\uf(x+y)-\uf(x) - \1_{[1,2)}(\frex)\1_{|y|\le 2}(y) y\cdot \nabla\uf(x)\big)k(x,y) \dd y.
\end{align}

In the recent contributions \cite{Ges21,GST20}, the optimal regularity in space and time for solutions to local, homogeneous porous media equations has been obtained.
Some of the key ingredients of these works were kinetic solution theory, singular moments of entropy dissipation measures, and velocity averaging in terms of micro-local Fourier analysis.
In the present work, we extend this theory in several directions:

(1) We treat non-local, degenerate PDEs, such as the non-local porous medium equation.

(2) We include spatially inhomogeneous degenerate diffusions.

(3) The previous works \cite{Ges21,GST20} relied on the uniqueness of solutions in order to derive singular moment estimates on the entropy dissipation measures. In this work, we introduce a new modification of the approach which completely avoids this.
As a consequence, the regularity statements apply to a large class of -- possibly non-unique -- solutions. 

Notably, the regularity estimates obtained in the present work depend on $u_0$ only via its $L^1$ norm. In particular, they immediately generalize to measure-valued initial data.

We will always work under the following assumption.
\begin{assumption}\label{ass_1}
\begin{enumerate}
    \item\label{ass_1i} The kernel $k(x,y):\R^d\times (\R^{d}\setminus\set{0})\to (0,\infty)$ is
    smooth in $x$ and $y$, and such that for all $\alpha,\beta\in\N_0^d$ there are $C_{\alpha,\beta}>0$ and $c>0$ with
\begin{align}
|\partial_y^\beta \partial_x^\alpha k(x,y)|&\le C|y|^{-d-\frex-|\beta|}, && \forall |y|>0, \label{ass_ker_1} \\
k(x,y)&\ge c|y|^{-d-\frex}, && \forall |y|>0, \ x\in \R^d \label{ass_ker_2}.
\end{align}
Moreover, we assume $k(x,y)=k(x+y,-y)$, and in the case $\frex=1$ also $k(x,y)=k(x,-y)$.
\item\label{ass_1ii} For the nonlinearity we suppose that $\nonl\in \CR{1}(\R)\cap \WSRloc{2}{1}(\R)$ with $\nonl'\ge 0$,
and there are $C>0$ and $m\in(1,\infty)$ such that for all $\delta>0$
 \begin{align}
  |\set{\nonl'(\vf)\le \delta}|&\le C\delta^{\frac{1}{m-1}} \quad \text{and} \quad \#\set{\nonl'(\vf)=\delta}\le C. \label{ass_nonl_1}
 \end{align}
 where we have written for short $\set{\nonl'(\vf)\le\d}:=\setc{\vf\in \R}{\nonl'(\vf)\le\d}$.
\end{enumerate}
\end{assumption}

The condition $k(x,y)=k(x+y,-y)$ ensures that $L$ is $L^2$-symmetric, and that $\int_{\R^d} Lu(x) \dd x=0$.

\medskip

A prime example which satisfies these conditions is the case of a fractional porous medium equation, \ie~$\nonl(v):=v^{[m]}:=|v|^{m-1}v$, $m>1$ and $k(x,y)=c|y|^{-d-\frex}$, which corresponds to the fractional Laplacian $L=(-\Delta)^{\frac{\frex}{2}}$. The corresponding problem then takes the form
 \begin{align}\label{pme_sys}
 \begin{pdeq}
  \partial_t\uf+(-\Delta)^{\frac{\frex}{2}}\uf^{[m]} &= S && \tin (0,T)\times \R^d,\\
  \uf(0)&=\uf_0 && \tin \R^d.
 \end{pdeq}
 \end{align} 
 We discuss some further examples of admissible nonlinearities $\nonl$ in Example \ref{ex_nonl} below.
 
\medskip

  The arguments of the present work rely on velocity averaging techniques and the kinetic formulation
\begin{align}
\partial_{t}\chi+\nonl'L_x\chi & =\partial_{\vf}q+S\delta_{v=u(t,x)},\label{pme_sys_kin}
\end{align}
for a non-negative measure $q$, where $\chi(u(t,x),v):=\1_{\vf<\uf(t,x)}-\1_{\vf<0}$. However, in contrast to \cite{GST20} we can allow even for generalized kinetic solutions $f$ which are not confined to the specific form of $\chi$, cf.~Definition \ref{def:gen_kinetic_sol} below. Additionally, both of the key arguments, that is, the micro-localization and the treatment of the entropy dissipation measure proceed along different lines, which allows to significantly weaken the assumptions. In particular, we do not rely on singular moments of the measure $q$ at all. Indeed, it will be sufficient to exploit that $q$ is bounded in the velocity variable $v$, which we prove to be satisfied for all generalized kinetic solutions.

The non-locality and spatial inhomogeneity of the operator $L$, and the generality of the nonlinearity $\nonl$ in \eqref{gen_sys} cause several challenges requiring new arguments, which we will comment on next:

{(1)} Due to the spatial inhomogeneity of \eqref{gen_sys}, the established methods from \cite{Ges21,GST20} do not appear to be applicable, and a different line of arguments needs to be developed.
Indeed, \cite{Ges21,GST20} relied on the so-called truncation property for the kinetic operator $\nonl'(v)L_x$ in \eqref{pme_sys_kin}, which corresponds to a microlocal analysis relating the Fourier variable $\xi$ with the velocity variable $v$.
However, in the setting of the present work, due to the spatial inhomogenity and non-locality of the operator $L$ it is not possible anymore to directly link the velocity $v$ to the Fourier variable $\xi$. Therefore, we develop a different approach to the microlocal analysis of the kinetic operator in Section \ref{AvLem}, which does not rely on a truncation property anymore.

{(2)} The $v$-dependency of the kinetic operator $\nonl'(v)L_x$ requires an extension and adaptation of the parametrix method for pseudodifferential operators providing precise bounds in terms of $v$ (see Lemma \ref{lem:symbol_est} and \ref{lem:FM}). 

{(3}) 
Since the parametrix does not provide an exact inverse, lower order terms emerge on the right-hand side of the estimates, which need to be controlled.
This is a delicate issue: Since we work in the broad class of (possibly non-unique) generalized kinetic solutions, only the properties contained in their definition can be used. This is different to previous work where the uniqueness of solutions was known, and, therefore, additional facts about solutions could be used. Moreover, the lower order terms contain the non-linearity $\nonl$, and, thus, are superlinear.
Therefore a careful interpolation and absorption argument is necessary to guarantee that these terms can be controlled for a large class of non-linearities (see Step \ref{cor:pme_l1_prf_st5} in the proof of Theorem \ref{cor:pme_l1}).

{(4)} 
Generalizing the decomposition employed in \cite{GST20} in terms of small and large velocities, we decompose in terms of level sets of $\nonl'$, and we prove that this allows the treatment of more general nonlinearities than in \cite{Ges21,GST20}.

{(5)} 
Finally, we identify a specific class of inhomogeneous Besov spaces of dominating mixed smoothness (see Lemma \ref{lem:interpol}) to be particularly well suited for the theory developed in this work. This allows to avoid the use of homogeneous spaces, which in \cite{GST20} required a careful argument in order to return to standard inhomogeneous (Sobolev) spaces. The approach presented here is more direct and makes use of recent advances in interpolation theory based on wavelet descriptions \cite{Tri19}.
In the argument for the regularity in time, cf. Lemma \ref{lem:av3}, interpolation theory is avoided altogether.

\medskip

Our first main result is the following.
 
 \begin{thm}\label{cor:pme_l1}
 Let $u_0\in\LR{1}(\R^d)$, $S\in L^1((0,T)\times \R^d)$, and let $L$ and $\nonl$ satisfy Assumption \ref{ass_1} for some $\frex\in(0,2)$, $m\in(1,\infty)$. Let $f$ be a generalized kinetic solution to \eqref{gen_sys} on $[0,T]\times \R^d$, cf.~Definition \ref{def:gen_kinetic_sol}, $\eta \in \CRi(\R_v)$ such that $\eta \nonl' f\in \LR{1}_{t,x,v}$, $|\eta|\le 1$ and $\eta'\in \LR{1}(\R_v)$, and write $u^\eta(t,x):=\int_{\R_v} \eta(v) f(t,x,v)\dd v$, $u(t,x):=u^\mathbf{1}(t,x)$.
  Let $p\in (1,m)$ and define
 \begin{align*}
  \kappa_t:= \frac{m-p}{p}\frac{1}{m-1}, \quad
  \kappa_x:= \frac{p-1}{p}\frac{\frex}{m-1}.
 \end{align*}
  Then, for all $\sigma_t\in [0,\kappa_t)$ and $\sigma_x\in [0,\kappa_x)$, $\eta \in \CRci(\R_v)$  we have
 \begin{align*}
  u^\eta\in \WSR{\sigma_t}{p}(0,T;\WSR{\sigma_x}{p}(\R^d)).
 \end{align*}
  Moreover, we have the estimate
 \begin{align}\label{cor:pme_est1_l1}
  \|u^\eta\|_{\WSR{\sigma_t}{p}(0,T;\WSR{\sigma_x}{p}(\R^d))}^p\lesssim (1+\|\eta'\|_{\LR{1}_v})(\|u_0\|_{L^1_{x}}^m +\|S\|_{L^1_{t,x}}^m + 1) +\|\eta\nonl'  f\|_{\LR{1}_{t,x,v}}.
 \end{align}
 Here, the implicit constant depends only on $d$, $\frex$, $m$, $p$, and the constants $c$, $C$ and $C_{\alpha,\beta}$ from Assumption \ref{ass_1} with $|\alpha|,|\beta|\le N$ for some $N=N(d)\in\N$.

 \medskip
 
  If $f(t,x,v)=\chi(u(t,x),v)$ for a.e. $(t,x,v)$ and $|\nonl|(u)\lesssim |u|+|u|^m$ then for any $\eps>0$ and all $p\in (1,m]$ we have
  \begin{align}\label{cor:pme_est1_l1_2}
  \|u\|_{\WSR{\sigma_t}{p}(0,T;\WSR{\sigma_x}{p}(\R^d))}^p\lesssim \|u_0\|_{L^1_{x}}^{m+\eps} +\|S\|_{L^1_{t,x}}^{m+\eps} +1,
 \end{align}
 where the implicit constant depends additionally on $\eps$ and where $\sigma_t=0$ and $\sigma_x\in (0,\frac{\frex}{m})$ in case of $p=m$.
 \end{thm}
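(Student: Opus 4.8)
The plan is to derive the estimate \eqref{cor:pme_l1} for a general admissible cut-off $\eta$ via a microlocal/velocity-averaging argument, and then to bootstrap this to the sharper, $\eta$-free bound \eqref{cor:pme_est1_l1_2} by choosing $\eta$ appropriately and absorbing the kinetic term $\|\eta\nonl'f\|_{\LR{1}_{t,x,v}}$. First I would record the properties that come for free from the definition of a generalized kinetic solution: that $q$ is a nonnegative measure which is \emph{bounded in $v$} (the $L^\infty_v$-bound on $q$ announced in the introduction), that $\|f(t)\|_{\LR1_{x,v}}$ is controlled by $\|u_0\|_{\LR1_x}+\|S\|_{\LR1_{t,x}}$, and that $f$ solves the kinetic equation \eqref{pme_sys_kin} (with $\chi$ replaced by $f$). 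These are the only facts about $f$ I am allowed to use.

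The heart of the proof is the averaging lemma. Multiplying the kinetic equation by $\eta(v)$ and integrating in $v$ produces an equation for $u^\eta$ whose right-hand side is a $v$-average of $q$, of $\eta'\nonl'L_x f$, and of the source term; the key quantitative input is the microlocal analysis of Section~\ref{AvLem} (Lemmas~\ref{lem:symbol_est}, \ref{lem:FM}, and the interpolation Lemma~\ref{lem:interpol}), which gains regularity in $(t,x)$ from the transport structure $\partial_t + \nonl'(v)L_x$ at the expense of powers of $\nonl'(v)$. Following the idea in point~(4) of the introduction, I would decompose the $v$-integral over the level sets $\{\nonl'(v)\le\delta\}$ and $\{\nonl'(v)>\delta\}$, using the measure bound \eqref{ass_nonl_1} on the former and the gained regularity on the latter, and optimize in $\delta$. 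This yields, for any $\sigma_t<\kappa_t$, $\sigma_x<\kappa_x$, a bound of the shape
\begin{align*}
 \|u^\eta\|_{\WSR{\sigma_t}{p}(0,T;\WSR{\sigma_x}{p})}^p \lesssim (1+\|\eta'\|_{\LR1_v})\big(\|f\|_{\LR1_{t,x,v}} + \|q\|_{\LR\infty_v\mathcal M} + \|S\|_{\LR1_{t,x}} + 1\big) + \|\eta\nonl'f\|_{\LR1_{t,x,v}},
\end{align*}
and plugging in the a priori bounds on $f$ and $q$ in terms of $\|u_0\|_{\LR1_x}^m + \|S\|_{\LR1_{t,x}}^m$ gives \eqref{cor:pme_est1_l1_2}... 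I mean \eqref{cor:pme_est1_l1}.

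For the second, sharper statement I specialize to $f=\chi(u,v)$. The point is that now $\eta\nonl'f$ on the right of \eqref{cor:pme_est1_l1} can be estimated by $u$ itself: since $\chi(u,v)$ is supported between $0$ and $u(t,x)$, one has $\int_{\R_v}|\eta(v)\nonl'(v)||\chi(u,v)|\dd v \lesssim \Phi^\#(|u|)$ for a primitive-type quantity which, under $|\nonl(u)|\lesssim|u|+|u|^m$, is controlled by $|u|+|u|^m$. Choosing $\eta$ to be a smooth truncation that equals $1$ on a large ball, the term $\|\eta\nonl'\chi\|_{\LR1_{t,x,v}}$ becomes $\lesssim \|u\|_{\LR1_{t,x}} + \|u\|_{\LR m_{t,x}}^m$; the first is fine, and the second — here is the delicate part — must be controlled by the left-hand side norm via a Gagliardo–Nirenberg/Sobolev interpolation inequality. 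One interpolates the $\LR m_{t,x}$-norm of $u$ between $\WSR{\sigma_t}{p}(\WSR{\sigma_x}{p})$ and $\LR1_{t,x}$; a dimensional count shows that for $\sigma_t,\sigma_x$ strictly below the critical exponents the interpolation exponent on the Sobolev norm is strictly less than $1$, so Young's inequality lets one absorb the $\|u\|^m$ contribution into the left side at the cost of enlarging the power of $\|u_0\|_{\LR1_x}$ and $\|S\|_{\LR1_{t,x}}$ from $m$ to $m+\eps$ — this is the origin of the $\eps$-loss and of the extra $\eps$-dependence of the implicit constant. The endpoint $p=m$ (with $\sigma_t=0$, $\sigma_x\in(0,\frac{\frex}{m})$) is obtained by the same scheme, noting that at $p=m$ one has $\kappa_t=0$ so no time regularity survives, while $\kappa_x=\frac{\frex}{m}$.

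The main obstacle I anticipate is precisely the absorption argument of the last paragraph (point~(3) in the introduction): one must verify that the interpolation exponent genuinely stays below $1$ in the full admissible range, which requires the combined scaling of $\sigma_t$ and $\sigma_x$ against the parabolic scaling of $L$ to work out, and one must be careful that the constant in Young's inequality does not interact badly with the $\eta$-dependent constant $(1+\|\eta'\|_{\LR1_v})$ when $\eta$ is taken larger and larger to exhaust $\mathbf 1$. A secondary technical point is justifying that $\eta\nonl'f\in\LR1_{t,x,v}$ in the first place for the truncations used, so that \eqref{cor:pme_est1_l1} is applicable; for $f=\chi$ this follows from the support property of $\chi$ together with $\nonl\in\WSRloc21$, but it should be stated carefully.
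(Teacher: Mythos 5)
Your proposal follows the paper's actual proof closely: invoke the averaging Lemmas (\ref{lem:av} spatially, \ref{lem:av3} temporally), interpolate via Lemma \ref{lem:interpol} into the mixed Besov scale and then into $\WSR{\sigma_t}{p}(\WSR{\sigma_x}{p})$, plug in the a priori bounds $\|q\|_{\LR\infty_v\calm_{TV}}\lesssim\|u_0\|_{\LR1_x}+\|S\|_{\LR1_{t,x}}$ (Lemma~\ref{lem:ph-est-1}) and $\|f\|_{\LR\infty_t\LR1_{x,v}}\lesssim\|u_0\|_{\LR1_x}+\|S\|_{\LR1_{t,x}}$ (Lemma~\ref{lem:ph-est-1a}), and finally absorb $\|\eta\nonl'f\|_{\LR1_{t,x,v}}$ by a Gagliardo--Nirenberg self-improvement when $f=\chi(u,v)$, accepting the $\eps$-loss and using truncations $\eta_R$ with $\|\eta_R'\|_{\LR1_v}\lesssim1$ uniformly. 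One minor imprecision: the $m$-th power of $\|u_0\|_{\LR1_x}+\|S\|_{\LR1_{t,x}}$ in \eqref{cor:pme_est1_l1} does \emph{not} come from the a priori bounds on $f$ and $q$ (those are linear); it comes from the term $\|\overline f\|_{\LR\infty_t\LR1_x\cap\LR1_{t,x}}^m$ appearing on the right of the averaging estimate \eqref{lem:av_est1} itself, a nonlinear contribution generated by the $L^1$--$L^{m,\infty}$ interpolation internal to Lemma~\ref{lem:av}. Your displayed intermediate bound with $\|f\|_{\LR1_{t,x,v}}$ entering linearly would yield a stronger estimate than the paper proves, which signals the bookkeeping error; this does not affect the soundness of the overall plan.
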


By \cite{AAO20}, there is a unique kinetic solution to \eqref{pme_sys}. Theorem \ref{cor:pme_l1} hence immediately implies
 
 \begin{cor}\label{cor:pme_l1_intro}
 Let $u_0\in\LR{1}(\R^d)$, $S\in L^1((0,T)\times \R^d)$ and assume $\frex\in(0,2)$.
 Let $u$ be the unique kinetic solution to \eqref{pme_sys} with $m\in(1,\infty)$ on $[0,T]\times \R^d$.
  Let $p\in (1,m]$ and define $\kappa_t, \kappa_x$ as in Theorem \ref{cor:pme_l1} (and $\kappa_t=0$ and $\kappa_x=\frac{\frex}{m}$ in case of $p=m$).
  Then, for all $\sigma_t\in [0,\kappa_t) \cup \set{0}$ and $\sigma_x\in [0,\kappa_x)$ we have $u\in \WSR{\sigma_t}{p}(0,T;\WSR{\sigma_x}{p}(\R^d))$, and for all $\eps>0$ there holds
  \begin{align*}
  \|u\|_{\WSR{\sigma_t}{p}(0,T;\WSR{\sigma_x}{p}(\R^d))}^p\lesssim \|u_0\|_{L^1_{x}}^{m+\eps} +\|S\|_{L^1_{t,x}}^{m+\eps} +1,
 \end{align*}
 where the implicit constant depends only on $d$, $\frex$, $m$, $p$ and $\eps$.
 \end{cor}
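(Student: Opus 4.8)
The plan is to deduce the corollary directly from Theorem \ref{cor:pme_l1}, once it is checked that \eqref{pme_sys} is the instance of \eqref{gen_sys} to which that theorem applies and that the unique kinetic solution provided by \cite{AAO20} is an admissible generalized kinetic solution of the special form $f=\chi(u,\cdot)$.

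First I would verify that Assumption \ref{ass_1} holds for the choice $\nonl(v)=|v|^{m-1}v$ and $k(x,y)=c_{d,\frex}|y|^{-d-\frex}$, for which $L=(-\Delta)^{\frex/2}$. The kernel bounds \eqref{ass_ker_1} and \eqref{ass_ker_2} hold with constants depending only on $d$ and $\frex$; the relation $k(x,y)=k(x+y,-y)$ holds because $k$ depends on $y$ only through $|y|$, and in the case $\frex=1$ the evenness $k(x,y)=k(x,-y)$ is immediate. For the nonlinearity, $\nonl\in\CR{1}(\R)\cap\WSRloc{2}{1}(\R)$ with $\nonl'(v)=m|v|^{m-1}\ge 0$; moreover $\set{\nonl'(v)\le\delta}$ is the interval $[-(\delta/m)^{1/(m-1)},(\delta/m)^{1/(m-1)}]$, of Lebesgue measure $\le C\delta^{1/(m-1)}$, and $\#\set{\nonl'(v)=\delta}\le 2$, so \eqref{ass_nonl_1} holds. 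Hence Assumption \ref{ass_1} is satisfied for every $m\in(1,\infty)$ and every $\frex\in(0,2)$, and the structural constants $c$, $C$, $C_{\alpha,\beta}$ are here explicit functions of $d$ and $\frex$ alone.

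Next, by \cite{AAO20} there is a unique kinetic solution $u$ to \eqref{pme_sys}, and by definition $f(t,x,v):=\chi(u(t,x),v)$ is then a generalized kinetic solution in the sense of Definition \ref{def:gen_kinetic_sol}. Since $|\nonl(u)|=|u|^m\le|u|+|u|^m$, the hypotheses of the second part of Theorem \ref{cor:pme_l1} are met, so for every $\eps>0$ and every $p\in(1,m]$ the bound \eqref{cor:pme_est1_l1_2} holds, with $\sigma_t=0$ and $\sigma_x\in(0,\frex/m)$ in the endpoint case $p=m$. Because $u_0\in L^1(\R^d)$ and $S\in L^1((0,T)\times\R^d)$, the right-hand side of \eqref{cor:pme_est1_l1_2} is finite, so in particular $u\in\WSR{\sigma_t}{p}(0,T;\WSR{\sigma_x}{p}(\R^d))$. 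It remains only to reconcile the index ranges: for $p\in(1,m)$ one has $\kappa_t>0$ and $\kappa_x>0$, so $[0,\kappa_t)\cup\set{0}=[0,\kappa_t)$ and the assertion is precisely \eqref{cor:pme_est1_l1_2}; for $p=m$ one has $\kappa_t=0$ and $\kappa_x=\frex/m$, so $[0,\kappa_t)\cup\set{0}=\set{0}$, the case $\sigma_x=0$ is trivial, and the remaining range $\sigma_x\in(0,\frex/m)$ is exactly what Theorem \ref{cor:pme_l1} yields. Since the structural constants are universal, the implicit constant inherited from Theorem \ref{cor:pme_l1} depends only on $d$, $\frex$, $m$, $p$ and $\eps$.

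There is essentially no genuine obstacle here: all the analytic work is contained in Theorem \ref{cor:pme_l1}, and what remains is bookkeeping — matching the admissible Sobolev indices at the endpoint $p=m$ and confirming that the structural constants of the pure fractional Laplacian are universal, so that the final constant has the asserted dependence.
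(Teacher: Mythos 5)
Your proposal is correct and follows exactly the route the paper takes: the paper presents the corollary as an immediate consequence of Theorem~\ref{cor:pme_l1} together with the uniqueness result of \cite{AAO20}. You simply spell out the bookkeeping that the paper leaves implicit — verifying Assumption~\ref{ass_1} for the fractional Laplacian and $\nonl(v)=|v|^{m-1}v$, observing that the unique kinetic solution of \cite{AAO20} yields a generalized kinetic solution $f=\chi(u,\cdot)$ in the sense of Definition~\ref{def:gen_kinetic_sol}, confirming $|\nonl|(u)=|u|^m\lesssim|u|+|u|^m$, and matching the index ranges at $p=m$ — all of which is sound.
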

 The optimality of these estimates is shown in Section \ref{Scale} below. The assertions in Theorem \ref{cor:pme_l1} and Corollary \ref{cor:pme_l1_intro} can be strengthened in the case of a bounded domain $\calo\subset\subset \R^d$.
 In the situation of Corollary \ref{cor:pme_l1_intro} we obtain for all $q\in[1,p]$ by the embedding $\LR{p}\subset \LR{q}$ on bounded sets, that $u\in \WSR{\sigma_t}{q}(0,T;\WSR{\sigma_x}{q}(\calo))$ with
 \begin{align}\label{cor:pme_est2_l1}
  \|u\|_{\WSR{\sigma_t}{q}(0,T;\WSR{\sigma_x}{q}(\calo))}\lesssim \|u_0\|_{L^1_x}^{m+\eps} +\|S\|_{L^1_{t,x}}^{m+\eps} +1.
 \end{align}
 Similar statements apply in case of Theorem \ref{cor:pme_l1}.

 \medskip

Due to the spatial inhomogeneity of \eqref{gen_sys}, the standard apriori estimates used in the construction of kinetic solutions, which are based on the $L^1$-contraction and spatial homogeneity, cannot be used. This demonstrates the relevance of the regularity estimates provided by Theorem \ref{cor:pme_l1} to the proof of existence of solutions, which constitutes the second main result of this work.
 \begin{thm}
\label{prop:wp-kinetic}Let $u_{0}\in L^{1}(\R^{d})$, $S\in L^{1}([0,T]\times\R^{d})$, and assume Assumption \ref{ass_1}. Then there exists a generalized kinetic solution $f$ to \eqref{gen_sys}. In addition, with $u(t,x):=\int f(t,x,v)dv$, $f$ satisfies $f(t,x,v)=\chi(u(t,x),v)$ for almost all $(t,x,v)\in [0,T]\times \R^d\times \R$.
\end{thm}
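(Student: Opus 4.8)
The plan is to construct $f$ by a vanishing-viscosity-type approximation and to use the regularity estimate of Theorem \ref{cor:pme_l1} to obtain the compactness needed to pass to the limit. First I would regularize the problem: replace $L$ by $L + \epsilon(-\Delta)$ (or $L_\epsilon = L \ast \rho_\epsilon$ together with an added nondegenerate term), mollify $u_0$ and $S$ so they are smooth and bounded, and if necessary replace $\nonl$ by a strictly increasing smooth approximation $\nonl_\epsilon$ with $\nonl_\epsilon' \ge \epsilon > 0$. For the regularized equation, classical quasilinear parabolic theory (together with the $L^1$-contraction, maximum principle and mass conservation coming from $\int_{\R^d} L u \dd x = 0$) gives a unique smooth solution $u_\epsilon$ with the a priori bounds $\|u_\epsilon(t)\|_{L^1_x} \le \|u_0\|_{L^1_x} + \|S\|_{L^1_{t,x}}$ and the entropy/kinetic identity
\begin{align*}
 \partial_t \chi(u_\epsilon,v) + \nonl_\epsilon'(v) L_x \chi(u_\epsilon,v) = \partial_v q_\epsilon + S_\epsilon \delta_{v=u_\epsilon} + (\text{vanishing }\epsilon\text{-terms}),
\end{align*}
with $q_\epsilon \ge 0$ and $\int_{\R_v} q_\epsilon \dd v$ controlled by $\|u_0\|_{L^1_x}^m + \|S_\epsilon\|_{L^1_{t,x}}^m + 1$ uniformly in $\epsilon$ (this is exactly the kind of entropy dissipation bound that underlies the boundedness of $q$ in $v$ used throughout the paper).

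Next I would verify that $f_\epsilon := \chi(u_\epsilon, v)$ is, after these $\epsilon$-modifications, a generalized kinetic solution of the regularized equation, so that the estimate \eqref{cor:pme_est1_l1} of Theorem \ref{cor:pme_l1} applies to it uniformly in $\epsilon$. Concretely, with $\eta = \mathbf{1}$ truncated appropriately, one gets a uniform bound on $u_\epsilon = u^{\mathbf 1}_\epsilon$ in $\WSR{\sigma_t}{p}(0,T;\WSR{\sigma_x}{p}(\R^d))$ for some $p \in (1,m)$ and $\sigma_t, \sigma_x > 0$, since the right-hand side of \eqref{cor:pme_est1_l1} depends on $u_0$ only through its $L^1$ norm, on $S$ only through its $L^1$ norm, and on $\|\nonl'_\epsilon f_\epsilon\|_{L^1_{t,x,v}}$, which equals $\int |\nonl_\epsilon'| \chi \, \dd v \dd x \dd t \lesssim \|\nonl_\epsilon(u_\epsilon)\|_{L^1_{t,x}}$ and is uniformly bounded via the a priori $L^1$ and $L^m$ control of $u_\epsilon$. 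Combined with the mass bound, this gives strong $L^p_{t,x,\mathrm{loc}}$ compactness of $\{u_\epsilon\}$ by Rellich–Kondrachov in the mixed-smoothness scale; passing to a subsequence, $u_\epsilon \to u$ in $L^1_{t,x,\mathrm{loc}}$ and a.e., hence $\chi(u_\epsilon,v) \to \chi(u,v)$ in $L^1_{t,x,v,\mathrm{loc}}$ and a.e.

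Then I would pass to the limit in the kinetic formulation. The measures $q_\epsilon$ are bounded in total mass uniformly, so up to a subsequence $q_\epsilon \rightharpoonup q$ weakly-$\ast$ as measures, with $q \ge 0$; one checks the required integrability and the bound on $q$ in $v$ survive the limit by lower semicontinuity. The extra $\epsilon$-terms (the $\epsilon(-\Delta)$ contribution, the regularization of $L$, and $\nonl_\epsilon - \nonl$) vanish in $\mathcal D'$ by the uniform bounds and the a.e. convergence. The source term $S_\epsilon \delta_{v = u_\epsilon} \to S\delta_{v=u}$ follows from $S_\epsilon \to S$ in $L^1$ and $u_\epsilon \to u$ a.e. One must also verify the initial condition is attained in the appropriate weak sense and that the limiting $f = \chi(u,v)$ satisfies \emph{all} the structural requirements of Definition \ref{def:gen_kinetic_sol} (sign/monotonicity properties of $\chi$, the defect measure's properties, time continuity in a weak topology); these are stable under the convergences established. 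The identity $f(t,x,v) = \chi(u(t,x),v)$ is automatic since it holds along the whole approximating sequence and is preserved by a.e. convergence.

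The main obstacle I expect is making the approximation scheme genuinely compatible with the (somewhat weak) notion of generalized kinetic solution while simultaneously retaining the a priori entropy dissipation bound $\int q_\epsilon \dd v \lesssim \|u_0\|_{L^1}^m + \|S\|_{L^1}^m + 1$ uniformly as the degenerate, nonlocal, inhomogeneous operator $L$ is regularized — the paper stresses precisely that spatial inhomogeneity destroys the usual $L^1$-contraction-based a priori estimates, so the uniform control of $q_\epsilon$ and of $\|\nonl_\epsilon(u_\epsilon)\|_{L^1_{t,x}}$ has to be extracted by a direct energy/entropy computation tailored to $L_\epsilon$ (using $L^2$-symmetry, $\int L_\epsilon u \dd x = 0$, and the lower bound \eqref{ass_ker_2}) rather than imported from a contraction principle. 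A secondary technical point is ensuring the truncation function $\eta$ used to apply Theorem \ref{cor:pme_l1} can be removed to recover a bound on the full average $u_\epsilon$ (not just $u_\epsilon^\eta$) with constants independent of the truncation level, which requires the mass and $L^m$ bounds to dominate the tail in $v$ uniformly; once these uniform estimates are in hand, the remaining compactness-and-limit passage is routine.
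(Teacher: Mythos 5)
Your high-level strategy — regularize, obtain uniform bounds via Theorem \ref{cor:pme_l1}, extract compactness, pass to the limit — is indeed the paper's strategy, and you correctly identify that the regularity estimate is the crucial replacement for the translation-invariance/$L^1$-contraction compactness argument that fails here. But compressing the regularization into a single parameter $\epsilon$ creates a genuine gap: Theorem \ref{cor:pme_l1} requires Assumption \ref{ass_1} to hold with fixed constants, and none of your proposed regularizations preserve it. If you regularize the kernel $k$ (to make $L_\epsilon$ a bounded operator so that classical parabolic theory applies), the lower bound $k\ge c|y|^{-d-a}$ near $y=0$ is destroyed and the constants in Assumption \ref{ass_1} degenerate as $\epsilon\to 0$; if instead you add $\epsilon(-\Delta)$, the effective symbol becomes $p(x,\xi)+\epsilon|\xi|^2$, which is of order $2$ and does not satisfy Assumption \ref{ass_1} for any $\frex\in(0,2)$ with $\epsilon$-uniform constants. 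In either case, Theorem \ref{cor:pme_l1} simply does not furnish an $\epsilon$-uniform regularity bound at the stage where the operator is still regularized, so the compactness claimed "uniformly in $\epsilon$" is not justified.

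The paper resolves this by separating the approximation into three independent parameters $\varepsilon_1$ (kernel regularization), $\varepsilon_2$ (artificial viscosity $\varepsilon_2\Delta$), and $\varepsilon_3$ (nondegeneracy $\nonl^{\varepsilon_3}(u)=\nonl(u)+\varepsilon_3 u$), and removing them sequentially, each time with a different source of compactness. The $\varepsilon_1\to 0$ limit uses the $\varepsilon_2$-uniform $H^1$ bound \eqref{eq:h1} from the artificial viscosity plus Aubin–Lions–Simon; the $\varepsilon_2\to 0$ limit uses the $\varepsilon_3$-uniform $W^{a/2,2}$ bound extracted from the nonlocal entropy dissipation via $(\nonl^{\varepsilon_3})'\ge\varepsilon_3$, cf.~\eqref{nonl-dis-eq-2}; and \emph{only} the $\varepsilon_3\to 0$ limit invokes Theorem \ref{cor:pme_l1}, for which the operator $L$ is already fixed and Assumption \ref{ass_1} holds with $\varepsilon_3$-uniform constants (since $\set{(\nonl')+\varepsilon_3\le\delta}\subset\set{\nonl'\le\delta}$). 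Finally, general $L^1$ data are handled in a fourth step by the $L^1$-stability estimate \eqref{eq:l1_approx}, which \emph{is} available for the approximate and limiting equations and is inherited by the limit; its role is passage to rough data, not compactness for fixed data, which is a distinction your proposal muddles slightly. So your outline captures the right endpoint of the argument but underestimates the work required to reach a stage where Theorem \ref{cor:pme_l1} is legitimately applicable with uniform constants.
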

We conclude with explicit examples of nonlinearities $\nonl$ satisfying Assumption \ref{ass_1}\eqref{ass_1ii}.
 \begin{example}\label{ex_nonl}
 Let $\nonl\in C^1(\R)$ with $\nonl'\ge 0$. Each of the following conditions imply Assumption \ref{ass_1}\eqref{ass_1ii} with $m\in (1,\infty)$. 
 \begin{enumerate}
 \item\label{ex_nonli} $m$ is an integer and $\nonl$ is a polynomial of degree $m$. 
 \item\label{ex_nonlii} There are $c>0$, $N\in \N$ and $m_i\in (1,\infty)$, $v_i\in \R$, $i\in \set{1,\ldots,N}$, with
 \begin{align*}
 \nonl'(v)=c\prod_{i=1}^N |v-v_i|^{m_i-1} \quad \tand \quad m-1=\sum_{i=1}^N (m_i-1).
 \end{align*}
 \end{enumerate}
 \end{example}

\textbf{Discussion of the literature:}
In \cite{PQRV17}, for fractional porous medium equations (PME) the inner H\"older regularity of $\partial_t u$ and $(-\Delta)^{\frex/2}\nonl(u)$ is shown for $S=0$ under the constraint $\nonl\in C^{1,\gamma}(\R)$, $\gamma\in(0,1)$. The regularity of fractional PMEs on manifolds in the context of $L^p$-spaces has been considered in the works \cite{RoS22a,RoS22b}. In \cite[Theorem 6.2]{RoS22b} the authors showed under the condition $S\in C([0,T],L^p(M))$ and $u_0\in B^{2\frex-2\frex/p}_{p,q}(M)$ short time existence and uniqueness of a solution $u\in L^q(0,T;H^{2\frex}(M))\cap H^{1,q}(0,T;L^{p}(M))$, albeit under the additional assumption that the initial condition satisfies $u_0\ge c>0$, in which case the machinery of quasilinear evolution equations by Pr\"u{\ss} \cite{Pru02} and Cl\'ement-Li \cite{ClL93} based on maximal $L^p$-theory is applicable.

A theory of bounded entropy solutions to nonlocal diffusions has been developed in a series of works, see \cite{CJ2011,EJ2014,KU2011} and the references therein.
This has been extended to renormalized entropy solutions and $L^1$ data for nonlocal PME in \cite{ODI2021}.
A kinetic theory for nonlocal PME has been introduced in \cite{AAO20,JJG2018}.
In the present work, we extend  the concept of generalized kinetic solutions that was previously known in the context of local scalar conservation laws to the setting of nonlocal parabolic-hyperbolic PDEs.
In particular, this comprises a proof of their existence for $L^1$ forcings.

Several forms of nonlocal extensions of the porous medium equation (PME) have been discussed in the literature, see, for example, \cite{BKM2010,CV2011,dPQRV2011}.
For derivations of nonlocal PMEs from microscopic dynamcis we refer to \cite{CdPG2023,CHJZ2022}.
Many works have been devoted to the analysis of the asymptotic behavior and apriori estimates, see, for example \cite{BV2015,Vaz14} and the references therein.
Numerical methods for nonlocal PMEs have been analyzed, for example, in \cite{DT2014}.
For an overview of the literature up to 2017 we refer to \cite{CDPFMV2017}.

For literature on the (spatial) regularity of solutions to the local porous medium equations in Sobolev spaces we refer to \cite{Ebm05,Ges21,TaT07}, see also \cite{Ges21} for a more detailed account on the available literature in this regard.
Higher integrability has been shown in \cite{BDKS18,GiS19}.
Improved averaging estimates for scalar  conservation laws based on singular moments of the entropy dissipation measure have been first derived in \cite{GL2019}.
An approach to the compactness and regularity of solutions to local, inhomogeneous, parabolic-hyperbolic PDEs based on the concept of $H$-measures has been developed in \cite{EM2023,EM2023-2}.

For an overview of results concerning the Sobolev regularity of solutions to local, nonlinear $p$-Laplace type equations by means of  nonlinear Calder\'on-Zygmund theory we refer to \cite{Min10} and the references therein.
Hölder regularity for nonlocal, nonlinear $p$-Laplace type equations has been considered for example in \cite{BLS2018}.
Also the regularity of solutions to linear, inhomogeneous, nonlocal PDEs has attracted significant interest, see \cite{AbG23, C17, G2018, MSY2021} and the references therein.

\section{Preliminaries, Notation and Function Spaces}\label{FctSpc}

We use the notation $a\lesssim b$ if there is a universal constant $C>0$ such that $a\le Cb$. We introduce $a\gtrsim b$ in a similar manner, and write $a \sim b$ if $a\lesssim b$ and $a\gtrsim b$.
For a Banach space $X$ and $I\subset \R$ we denote by $C(I;X)$ the space of bounded and continuous $X$-valued functions endowed with the norm $\|f\|_{C(I;X)}:=\sup_{t\in I}\|f(t)\|_X$. If $X=\R$ we write $C(I)$. For $k\in \N\cup\set{\infty}$, the space of $k$-times continuously differentiable functions is denoted by $C^k(I;X)$. The subspace of $C^k(I;X)$ consisting of compactly supported functions is denoted by $C^k_c(I;X)$. Moreover, we write $\calm_{TV}$ for the space of all measures with finite total variation.
Throughout this article we use several types of $L^p$-based function spaces.
For a Banach space $X$ and $p\in[1,\infty]$, we endow the Bochner-Lebesgue space $\LR{p}(\R;X)$ with the usual norm
\begin{align*}
 \norm{f}_{\LR{p}(\R;X)}&:=\left(\int_{\R}\norm{f(t)}_{X}^p \dd t\right)^\frac{1}{p},
\end{align*}
with the standard modification in the case of $p=\infty$. For $k\in \N_0:=\N\cup\set{0}$, the corresponding $X$-valued Sobolev space is denoted by $\WSR{k}{p}(\R;X)$. If $\sigma\in (0,\infty)$ is non-integer (say $\sigma=k+r$, with $k\in\N_0$ and $r\in(0,1)$), then we define the $X$-valued Sobolev-Slobodecki\u{\i} space $\WSR{\sigma}{p}(\R;X)$ as the space of functions in $\WSR{k}{p}(\R;X)$ with
\begin{align}\label{hom_sob_norm}
 \norm{f}_{\DSR{\sigma}{p}(\R;X)}:=\left(\int_{\R\times\R}\frac{\norm{D^kf(t)-D^kf(s)}_X^p}{|t-s|^{rp+1}}\dd s \dd t\right)^{\frac{1}{p}}<\infty,
\end{align}
again with the usual modification in the case of $p=\infty$. Further, let $\DSR{\sigma}{p}(\R;X)$ be the space of all locally integrable $X$-valued functions $f$ for which \eqref{hom_sob_norm} is finite. If we factor out the equivalence relation $\sim$, where $f\sim g$ if $\norm{f-g}_{\DSR{\sigma}{p}(\R;X)}=0$, the space $\DSR{\sigma}{p}(\R;X)$ equipped with the norm $\norm{\cdot}_{\DSR{\sigma}{p}(\R;X)}$ is a Banach space.

Moreover, in order to treat regularity results in both time and space efficiently, we introduce spaces with dominating mixed derivatives set in the framework of Fourier analysis, that is, corresponding Besov spaces. These spaces have a long history in the literature, beginning with the works of S.\@ M.\@ Nikol'ski\u{\i} \cite{Nik62, Nik63b, Nik63a}. We refer the reader to the monograph of Schmeisser and Triebel \cite{ScT87} and the references therein. We adopt the notation of \cite{ScT87} for the non-homogeneous spaces.

Let $\vp$ be a smooth function supported in the annulus $\{\xi\in\R^{d}:\,\frac{1}{2}\le|\xi|\le2\}$ and such that
\[
\sum_{j\in\Z}\vp_j(\xi):=\sum_{j\in\Z}\vp(2^{-j}\xi)=1,\quad\forall\xi\in\R^{d}\setminus\set{0}.
\]
Similarly, let $\eta$ be a smooth function supported in $\vpp{-2,-\frac12}\cup \vpp{\frac12,2}$ with
\[
\sum_{l\in\Z}\eta_l(\tau):=\sum_{l\in\Z}\eta(2^{-l}\tau)=1,\quad\forall\tau\in\R\setminus\set{0}.
\]
Moreover, define $\phi_j:=\vp_j$ for $j\ge 1$ and $\phi_0:=1-\sum_{j\ge 1}\phi_j$ as well as $\psi_l:=\eta_l$ for $l\ge 1$ and $\psi_0:=1-\sum_{l\ge 1}\eta_l$. We will use the shorthand notation $\eta_l\vp_j$ for the function $(\tau,\xi)\mapsto \eta_l(\tau)\vp_j(\xi)$, and similarly for combinations of $\psi_l$ and $\phi_j$.

\begin{defn}\label{defn:spaces}
 Let $\sigma_i\in\vpp{-\infty,\infty}$, $i=t,x$, and $p,q\in[1,\infty]$. Set $\overline{\sigma}:=(\sigma_t,\sigma_x)$.
 The non-homogeneous Besov space with dominating mixed derivatives $S^{\overline{\sigma}}_{p,q}B(\R^{d+1})$ is given by
  \begin{align*}
   S^{\overline{\sigma}}_{p,q}B:=S^{\overline{\sigma}}_{p,q}B(\R^{d+1}):=\setc{f\in\cals'(\R^{d+1})}{\|f\|_{S^{\overline{\sigma}}_{p,q} B}<\infty},
  \end{align*}
  with the norm
  \begin{align*}
   \|f\|_{S^{\overline{\sigma}}_{p,q}B}&:=\left(\sum_{l,j\ge 0} 2^{\sigma_t lq}2^{\sigma_x jq}\|\calf^{-1}_{t,x}\psi_l \phi_j\calf_{t,x} f\|_{\LR{p}(\R^{d+1})}^q\right)^\frac1q, \quad q<\infty,\\
   \|f\|_{S^{\overline{\sigma}}_{p,\infty}B}&:=\sup_{l,j\ge 0} 2^{\sigma_t l}2^{\sigma_x j}\|\calf^{-1}_{t,x}\psi_l \phi_j\calf_{t,x} f\|_{\LR{p}(\R^{d+1})}.
  \end{align*}
\end{defn}

\begin{rem}\label{rem:spaces}
 The spaces $S^{\overline{\sigma}}_{p,q}B(\R^{d+1})$ are Banach spaces with $S^{\overline{\sigma}+\overline{\eps}}_{p,1}(\R^{d+1})\subset S^{\overline{\sigma}}_{p,\infty}B(\R^{d+1})$ for $\overline\eps\in (0,\infty)^2$, see Theorem 2.2.4 and Proposition 2.2.3/2(ii) in \cite{ScT87}.
\end{rem}
We will use the following interpolation result.
\begin{lem}\label{lem:interpol}
    Let $p_0,p_1,q_0,q_1\in [1,\infty]$ be such that $\min\set{q_0,q_1}<\infty$ and $\overline{r}_0,\overline{r}_1\in \R^2$.
    For $\theta\in (0,1)$ there holds the complex interpolation
  \begin{align*}
      [S^{\overline{r}_0}_{p_0,q_0}B,S^{\overline{r}_1}_{p_1,q_1}B]_\theta=S^{\overline r_\theta}_{p_\theta,q_\theta}B
  \end{align*}
  with $\overline{r}_\theta:=\overline{r}_0(1-\theta)+\overline{r}_1\theta$, $\frac{1}{p_\theta}:=\frac{1-\theta}{p_0}+\frac{\theta}{p_1}$, and $\frac{1}{q_\theta}:=\frac{1-\theta}{q_0}+\frac{\theta}{q_1}$.
\end{lem}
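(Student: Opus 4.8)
The plan is to reduce the statement to a known interpolation result for vector-valued or mixed-smoothness sequence spaces, exploiting the retraction--coretraction (lifting) technique that is standard for Besov-type spaces. Concretely, one introduces the analysis map $J f := \bigl(\mathcal{F}^{-1}_{t,x}\psi_l\phi_j\mathcal{F}_{t,x}f\bigr)_{l,j\ge 0}$, sending $S^{\overline\sigma}_{p,q}B(\R^{d+1})$ isomorphically onto a closed subspace of the mixed-weighted sequence space $\ell^q_{\overline\sigma}\bigl(\N_0^2;L^p(\R^{d+1})\bigr)$, with synthesis map $P\,(g_{l,j}) := \sum_{l,j} \mathcal{F}^{-1}_{t,x}\widetilde\psi_l\widetilde\phi_j\mathcal{F}_{t,x}g_{l,j}$ built from fattened dyadic cutoffs $\widetilde\psi_l,\widetilde\phi_j$ (equal to $1$ on the support of $\psi_l,\phi_j$), so that $P\circ J = \mathrm{id}$ on $S^{\overline\sigma}_{p,q}B$. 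The key point is that $J$ and $P$ are \emph{simultaneously} bounded between the two pairs of spaces appearing in the statement — this is where the condition $\min\{q_0,q_1\}<\infty$ enters, ensuring the sequence spaces are suitable for complex interpolation and that the Fourier multiplier estimates defining $P$ (Mikhlin/Marcinkiewicz-type bounds on the product cutoffs) hold uniformly. By the interpolation property of retracts (Triebel, \emph{Interpolation Theory}, §1.2.4), $[S^{\overline r_0}_{p_0,q_0}B, S^{\overline r_1}_{p_1,q_1}B]_\theta$ is isomorphic via $J$ to the corresponding interpolation space of the sequence spaces.

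The heart of the matter is then the identity
\[
\bigl[\ell^{q_0}_{\overline r_0}(\N_0^2;L^{p_0}),\,\ell^{q_1}_{\overline r_1}(\N_0^2;L^{p_1})\bigr]_\theta
= \ell^{q_\theta}_{\overline r_\theta}(\N_0^2;L^{p_\theta}),
\]
with the stated relations for $p_\theta, q_\theta, \overline r_\theta$. This follows by combining the classical complex interpolation of weighted $\ell^q$-spaces (Bergh--Löfström, Theorem 5.6.1 for $\ell^q(w)$, which handles the $q_\theta$ and $\overline r_\theta$ relations, again requiring $\min\{q_0,q_1\}<\infty$ so one of the spaces has absolutely continuous norm) with complex interpolation of the $L^p$-scale (Stein--Weiss / Calderón), applied iteratively: one first interpolates the inner $L^p(\R^{d+1})$ factor, then the outer weighted sequence structure. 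Alternatively, and perhaps cleanly for the mixed-smoothness setting, one invokes the wavelet characterization of $S^{\overline\sigma}_{p,q}B$ from \cite{Tri19}, which identifies these spaces directly with (tensor-product) sequence spaces on a dyadic index set, after which the interpolation is again a matter of the weighted $\ell^q(L^p)$ computation above.

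The main obstacle I anticipate is purely bookkeeping: verifying that the synthesis operator $P$ is bounded on \emph{both} sequence spaces $\ell^{q_i}_{\overline r_i}(\N_0^2;L^{p_i})$ with the \emph{same} choice of fattened cutoffs, i.e., that the product Fourier multipliers $\widetilde\psi_l\widetilde\phi_j$ satisfy the Marcinkiewicz condition uniformly in $l,j$ on $L^{p_i}(\R^{d+1})$ for $i=0,1$ — this is where the anisotropic (dominating mixed) structure differs from the isotropic case and one must use an iterated (one variable at a time) Mikhlin argument rather than a single application. Once that uniform multiplier bound is in place, and once the degenerate cases $p_i$ or $q_i=\infty$ are handled (using $\min\{q_0,q_1\}<\infty$ to stay within the scope of complex interpolation of sequence spaces), the rest is the standard retract argument. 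No case distinction on the signs or integrality of the smoothness parameters $\overline r_i$ is needed, since the Fourier-analytic definition in Definition \ref{defn:spaces} is uniform in $\overline\sigma\in\R^2$.
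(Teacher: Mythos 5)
Your proposal is correct and it is in substance the same proof that the paper delegates to the literature: the paper's own ``proof'' consists only of a citation to Vyb\'iral \cite[Theorem~4.6]{Vyb06} and the remark that the same technique carries over from the bounded-domain to the $\R^d$ setting (pointing to \cite[Proposition~1.20]{Tri19} and \cite[Proposition~4.4]{NGS17a}), and what Vyb\'iral does is precisely the retraction/coretraction reduction to mixed sequence spaces $\ell^{q}_{\overline r}(\N_0^2;L^p)$ followed by the Calder\'on--Bergh--L\"ofstr\"om computation of their complex interpolation, while Triebel's reference is the wavelet-characterization route, both of which you list. You correctly identify where the hypothesis $\min\{q_0,q_1\}<\infty$ is used (so that at least one endpoint sequence space has order-continuous norm, which is required for the complex interpolation identity of $\ell^q$-valued spaces) and you correctly flag the only technical point that is not entirely cosmetic, namely that the common retraction $P$ built from fattened tensor-product cutoffs $\widetilde\psi_l\widetilde\phi_j$ must be bounded uniformly on both endpoint sequence spaces, which in the dominating-mixed setting rests on an iterated one-variable Mikhlin--H\"ormander argument rather than a single multi-dimensional one. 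In short, your sketch reconstructs the argument behind the paper's citations rather than offering a genuinely different route, and the steps you identify as needing care are indeed the ones the cited works take care of.
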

\begin{proof}
   This follows for bounded domains from \cite[Theorem 4.6]{Vyb06}; in our case of $\R^d$ the same technique applies as mentioned in \cite[Proposition 1.20]{Tri19} or \cite[Proposition 4.4]{NGS17a}. 
\end{proof} 
We will now introduce symbol classes for pseudodifferential operators. We will use the notation $\langle\xi\rangle:=(1+|\xi|^2)^{\frac12}$ as well as $\ddd(\xi):=\frac{1}{(2\pi)^d}\dd\xi$ and $\ddd(\tau,\xi):=\frac{1}{(2\pi)^{d+1}}\dd(\tau,\xi)$.
\begin{defn}\label{defn:symbolclass}
\begin{enumerate}
    \item\label{defn:symbolclassi} Let $d\in \N$, and $\me=(m_\tau,m_\xi)\in \R^2$.
    We say that $p:\R^{d}\times \R^{d+1}\to \C$ belongs to $S^{\me}_{1,0}$ if $p=p(x,\tau,\xi)$ is smooth in $x\in \R^{d}$, $\tau\in \R$ and $\xi\in \R^d$, and for each $N\in \N$ there holds
\begin{align*}
    \|p\|_{S^\me_{1,0},N}:=\sup \langle\tau\rangle^{-m_\tau+\beta_\tau}\langle\xi\rangle^{-m_\xi+|\beta_\xi|}|\partial_x^\alpha\partial_\tau^{\beta_\tau}\partial_\xi^{\beta_\xi}p(x,\tau,\xi)|<\infty.
\end{align*}
where the supremum runs over all $x\in \R^{d}$, $(\tau,\xi)\in \R^{d+1}$ and all $\alpha\in \N_0^{d}$, $(\beta_\tau,\beta_\xi)\in \N_0^{d+1}$ with $|\alpha|,|\beta_\tau|,|\beta_\xi|\le N$.
For $p\in S^\me_{1,0}$ and $l,j\in \N_0$ we write $p_{lj}(x,\tau,\xi):=\psi_l(\tau)\phi_j(\xi)p(x,\tau,\xi)$.
    \item\label{defn:symbolclassii} For $p\in S^{\me}_{1,0}$ and $u\in \cals(\R^{d+1})$ we introduce
\begin{align*}
    p(x,D_{t,x})u(t,x)&:=\int_{\R^{d+1}}\int_{\R^{d+1}}e^{i(t-s)\tau+i(x-y)\cdot\xi} p(x,\tau,\xi)u(s,y) \dd(s,y) \ddd(\tau,\xi), \\
    p(D_{t,x},x)u(t,x)&:=\int_{\R^{d+1}}\int_{\R^{d+1}}e^{i(t-s)\tau+i(x-y)\cdot\xi} p(y,\tau,\xi)u(s,y) \dd(s,y) \ddd(\tau,\xi).
\end{align*}
We call $p(x,D_{t,x})$ a pseudodifferential operator of order $\me$ in $x$-form, and $p(D_{t,x},x)$ a pseudodifferential operator of order $\me$ in $y$-form.
If $p\in S^\me_{1,0}$ does not depend on $\tau$, we also write $p(x,D_{x})$ and $p(D_{x},x)$, respectively.
\end{enumerate}
\end{defn}
Equipped with the semi-norms $\|p\|_{S^\me_{1,0},N}$, the space $S^\me_{1,0}$ turns into a Fr\'echet space.
Observe that for $p\in S^\me_{1,0}$ and $l,j\in\N_0$ we have
\begin{align*}
 p_{lj}(x,D_{t,x})&=p(x,D_{t,x})\psi_l(D_t)\phi_j(D_x) =\psi_l(D_t)p(x,D_{t,x})\phi_j(D_x), \\
 p_{lj}(D_{t,x},x)&=\psi_l(D_t)\phi_j(D_x)p(D_{t,x},x) =\phi_j(D_x)p(D_{t,x},x)\psi_l(D_t).
\end{align*}
We associate to $p\in S^{\me}_{1,0}$ the symbols
\begin{align*}
    p_L(x,\tau,\xi):=\int_{\R^d}\int_{\R^d}e^{-iy\cdot\eta} p(x+y,\tau,\xi+\eta)\dd y \ddd\eta, \\
    p_R(x,\tau,\xi):=\int_{\R^d}\int_{\R^d}e^{iy\cdot\eta} p(x+y,\tau,\xi+\eta)\dd y \ddd\eta,
\end{align*}
where the integrals are understood in the oscillatory sense.
Moreover, for $r\in S^{\me_1}_{1,0}$ and $q\in S^{\me_2}_{1,0}$ we introduce the oscillatory integral
\begin{align*}
    (r\# q)(x,\tau,\xi):=\int_{\R^{d}}\int_{\R^{d}} e^{-iy\cdot \eta} r(x,\tau,\xi+\eta)q(x+y,\tau,\xi)\dd y \ddd\eta.
\end{align*}
\begin{lem}\label{lem:pseudo_comp}
Let $\me,\me_1,\me_2\in \R^2$ and $p\in S^{\me}_{1,0}$, $r\in S^{\me_1}_{1,0}$ and $q\in S^{\me_2}_{1,0}$.
Then for every $x\in \R^d$ and $(\tau,\xi)\in\R^{d+1}$, the above oscillatory integrals are well-defined, and there holds $p_L, p_R\in S^{\me}_{1,0}$ as well as $r\# q\in S^{\me_1+\me_2}_{1,0}$.
The mappings $p\mapsto p_L$, $p\mapsto p_R$ are bounded linear mappings on $S^{\me}_{1,0}$, while $(r,q)\mapsto r\# q$ is a bounded bilinear mapping from $S^{\me_1}_{1,0}\times S^{\me_2}_{1,0}$ to $S^{\me_1+\me_2}_{1,0}$.
More precisely, for $N\in\N$ and $k\in \N_0$ there is a $c>0$ such that
\begin{align*}
\left\|r\# q - \sum_{|\alpha|< k} \frac{1}{\alpha!} (\partial_\xi^\alpha r)(D_x^\alpha q)\right\|_{S^{\me_1+\me_2-k};N} \le c \sum_{|\alpha|=k}\|\partial_\xi^\alpha r\|_{S^{\me_1}_{1,0};N}\|D_x^\alpha q\|_{S^{\me_2}_{1,0};N}.
\end{align*}
Finally, for $u\in \cals(\R^{d+1})$ there holds
\begin{align*}
    p_L(x,D_{t,x})u=p(D_{t,x},x)u, \quad p_R(D_{t,x},x)u=p(x,D_{t,x})u, \quad (r\# q)(x,D_{t,x})=r(x,D_{t,x})q(x,D_{t,x})u.
\end{align*}
\end{lem}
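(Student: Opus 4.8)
The plan is to establish the three parts (well-definedness of the oscillatory integrals and the symbol estimates for $p_L$, $p_R$, $r\#q$; the asymptotic expansion with remainder; and the operator identities) in turn, relying on the standard machinery of oscillatory integrals as developed, e.g., in Kumano-go's book. First I would recall the basic device: an oscillatory integral $\int\!\int e^{-iy\cdot\eta}a(y,\eta)\dd y\,\ddd\eta$ with $a$ of polynomial growth is defined as $\lim_{\eps\to 0}\int\!\int e^{-iy\cdot\eta}\chi(\eps y,\eps\eta)a(y,\eta)\dd y\,\ddd\eta$ for a cutoff $\chi\in\cals$ with $\chi(0,0)=1$, and the limit is computed by inserting the identity $(1+|\eta|^2)^{-M}(1-\Delta_y)^M e^{-iy\cdot\eta}=e^{-iy\cdot\eta}$ and the analogous one in $\eta$, then integrating by parts. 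For $r\#q$ the integrand is $a(y,\eta)=r(x,\tau,\xi+\eta)q(x+y,\tau,\xi)$, viewed with $x,\tau,\xi$ as parameters; since $r\in S^{\me_1}_{1,0}$ grows at most polynomially in $\eta$ and all $y$-derivatives of $q$ are bounded, the double integration by parts (say $2M$ times in each variable with $M$ large depending on $|\me_1|$ and $d$) produces an absolutely convergent integral, and one reads off that the result is smooth in $(x,\tau,\xi)$ by differentiating under the (now convergent) integral. The claims for $p_L$ and $p_R$ are the special cases obtained by taking $q\equiv 1$ (up to the sign in the exponent and a shift of arguments), so it suffices to treat $r\#q$.

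Next I would prove the symbol estimate $r\#q\in S^{\me_1+\me_2}_{1,0}$ together with the quantitative remainder bound; these are really one computation. The Taylor expansion of $r(x,\tau,\xi+\eta)$ in $\eta$ around $0$ to order $k$ gives $r(x,\tau,\xi+\eta)=\sum_{|\alpha|<k}\frac{\eta^\alpha}{\alpha!}(\partial_\xi^\alpha r)(x,\tau,\xi)+k\sum_{|\alpha|=k}\frac{\eta^\alpha}{\alpha!}\int_0^1(1-\theta)^{k-1}(\partial_\xi^\alpha r)(x,\tau,\xi+\theta\eta)\dd\theta$. Plugging the polynomial part into the oscillatory integral and using $\eta^\alpha e^{-iy\cdot\eta}=(-D_y)^\alpha e^{-iy\cdot\eta}$, then integrating by parts in $y$ and invoking the Fourier inversion identity $\int\!\int e^{-iy\cdot\eta}g(x+y)\dd y\,\ddd\eta=g(x)$, yields exactly the terms $\frac{1}{\alpha!}(\partial_\xi^\alpha r)(x,\tau,\xi)(D_x^\alpha q)(x,\tau,\xi)$. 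The remainder term is an oscillatory integral of the same type but with $\partial_\xi^\alpha r\in S^{\me_1-k}_{1,0}$ in place of $r$ (the extra $\theta\eta$-shift and the $\theta$-integration are harmless after the same $y$-and-$\eta$ integrations by parts), and the integration-by-parts bookkeeping produces a bound by finitely many seminorms $\|\partial_\xi^\alpha r\|_{S^{\me_1}_{1,0};N}\|D_x^\alpha q\|_{S^{\me_2}_{1,0};N}$ times $\langle\tau\rangle^{m_{1,\tau}+m_{2,\tau}-k}\langle\xi\rangle^{m_{1,\xi}+m_{2,\xi}-k}$ (here one uses that differentiating $r(x,\tau,\xi+\theta\eta)$ in $\xi$ is the same as in $\eta$, so the $\langle\xi\rangle$-gain of order $k$ is genuinely there). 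Applying this with $k=0$ gives $r\#q\in S^{\me_1+\me_2}_{1,0}$ and the boundedness of the bilinear map; with general $k$ it gives the stated remainder estimate. Finally, the operator identities: $p_L(x,D_{t,x})u=p(D_{t,x},x)u$ and $p_R(D_{t,x},x)u=p(x,D_{t,x})u$ are obtained by writing out both sides as iterated (oscillatory) integrals against $u\in\cals(\R^{d+1})$, changing variables $y\mapsto x+y$, $\eta\mapsto\eta-\xi$, and applying Fubini (justified via the oscillatory-integral regularization); the composition identity $(r\#q)(x,D_{t,x})u=r(x,D_{t,x})q(x,D_{t,x})u$ follows by inserting the definition of $q(x,D_{t,x})u$ into $r(x,D_{t,x})(\cdot)$, interchanging integrals, and recognizing the inner $\eta$-integral as the defining integral of $r\#q$.

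The main obstacle is not conceptual but a matter of keeping the bookkeeping uniform in the parameters $(\tau,\xi)$: one must choose the number $2M$ of integrations by parts (and the index $N$ of seminorms entering the final bound) large enough relative to $d$ and the orders $\me_1,\me_2,k$ so that every intermediate integral converges absolutely and differentiation under the integral sign is legitimate, and simultaneously track that each integration by parts in $\eta$ either lowers the $\eta$-power by two or costs a factor $\langle\xi\rangle^{-2}$ (never a growing power of $\langle\xi\rangle$), which is what makes the order come out to $\me_1+\me_2-k$ rather than something worse. This is the standard but slightly tedious heart of the symbol calculus; once the estimate with explicit seminorm dependence is in hand, Fréchet-continuity of $p\mapsto p_L$, $p\mapsto p_R$ and bilinear continuity of $(r,q)\mapsto r\#q$ are immediate, and the operator identities are formal manipulations legitimized by the same regularization. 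I would present the $r\#q$ case in full and then simply remark that $p_L$, $p_R$ are the specializations $q\equiv 1$ with the obvious sign change.
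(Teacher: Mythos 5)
Your sketch is the standard oscillatory‑integral argument (regularization, Taylor expansion in $\eta$, and integration by parts), which is precisely the content of the cited Theorems 3.15, 3.16 and 3.32 in Abels' monograph; the paper itself gives no independent proof and simply refers to that source, so your route coincides with the intended one.

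Two small inaccuracies are worth flagging. First, $p_L$ and $p_R$ are \emph{not} literally the specialization $q\equiv 1$ of $r\# q$: in $r\# q$ the $y$-shift and the $\eta$-shift sit in two different symbols, whereas in $p_L(x,\tau,\xi)=\int\!\int e^{-iy\cdot\eta}p(x+y,\tau,\xi+\eta)\dd y\,\ddd\eta$ they sit in the \emph{same} one (indeed $r\#1=r$ trivially by Fourier inversion). The reduction of $p_L$, $p_R$ needs its own Taylor expansion of $p(x+y,\tau,\xi+\eta)$ in $\eta$; the technique is identical, but it is a parallel argument, not a corollary. Second, the order gain in the remainder is only in the $\xi$-slot, not in $\tau$: the $\#$-integral only pairs $y$ with $\eta$ (the dual of $\xi$), and $\tau$ enters purely as a parameter, so $\partial_\xi^\alpha r$ with $|\alpha|=k$ gains $\langle\xi\rangle^{-k}$ but nothing in $\langle\tau\rangle$. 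Accordingly the shorthand $\me_1+\me_2-k$ in the statement must be read as $(m_{1,\tau}+m_{2,\tau},\,m_{1,\xi}+m_{2,\xi}-k)$ — this is exactly how it is used later in Lemma~\ref{lem:FM} (e.g.\ $\calr_{1,v}\in S^{(0,-1)}_{1,0}$ when $\calm_v'\in S^{(0,-\frex)}_{1,0}$, $\call_v\in S^{(0,\frex)}_{1,0}$, $k=1$) — and your claimed factor $\langle\tau\rangle^{m_{1,\tau}+m_{2,\tau}-k}$ is not correct. Neither slip affects the overall viability of the argument.
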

\begin{proof}
    Analogous to Theorem 3.16 and Theorem 3.32 in \cite{Abe12}. The boundedness of the mappings follows by an inspection of the proof of Theorem 3.15 in \cite{Abe12}.
\end{proof}
In particular, Lemma \ref{lem:pseudo_comp} states that the composition $r(x,D_{t,x})q(x,D_{t,x})$ is a pseudodifferential operator of order $\me_1+\me_2$ in $x$-form, and its symbol is given by $q\# p\in S^{\me_1+\me_2}_{1,0}$.

We record the following continuity result for symbols of pseudodifferential operators well-adapted to our Averaging Lemma \ref{lem:av}.
We recall the definition of the symbol class $S^{\me}_{1,0}$ in Definition \ref{defn:symbolclass}.

\begin{thm}\label{thm:FM}
  Let $r\in S^{\me}_{1,0}$ with $\me:=(m_\tau,m_\xi)\in \R^2$.
  Then there is $N=N(d)\in\N$ such that for all $p\in[1,\infty]$ there is $C>0$ such that for all $l,j\in \N_0$ the operators $r_{lj}(x,D_{t,x})$ and $r_{lj}(D_{t,x},x)$ extend to bounded linear operators on $\LR{p}(\R^{d+1})$ with
  \begin{align*}
  \|r_{lj}(x,D_{t,x})\|_{\LR{p}_{t,x}\to\LR{p}_{t,x}},\|r_{lj}(D_{t,x},x)\|_{\LR{p}_{t,x}\to\LR{p}_{t,x}}\le C2^{m_\tau l + m_\xi j}\|r\|_{S^{\me}_{1,0};N}.
  \end{align*}
  Furthermore, the same is true if we replace $\LR{p}(\R^{d+1})$ by $\calm_{TV}$.
\end{thm}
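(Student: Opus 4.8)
The plan is to reduce the statement to a Mikhlin–Hörmander type multiplier theorem, using that after the Littlewood–Paley cutoffs $\psi_l(\tau)\phi_j(\xi)$ the symbol is supported in a dyadic block, so a rescaling normalizes the frequency region. First I would treat the $x$-form operator $r_{lj}(x,D_{t,x})$; the $y$-form operator $r_{lj}(D_{t,x},x)$ is handled by the same argument (or by passing to the adjoint). Write $r_{lj}(x,\tau,\xi)=\psi_l(\tau)\phi_j(\xi)r(x,\tau,\xi)$ and perform the change of variables $\tau=2^l\tau'$, $\xi=2^j\xi'$ in the oscillatory integral defining $r_{lj}(x,D_{t,x})$. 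For $l,j\ge 1$ the rescaled symbol $\tilde r(x,\tau',\xi'):=\psi(\tau')\phi(\xi')r(x,2^l\tau',2^j\xi')$ is supported in a fixed annulus $\{\tfrac12\le|\tau'|\le 2\}\times\{\tfrac12\le|\xi'|\le 2\}$ (and the $l=0$ or $j=0$ blocks are supported in a fixed ball, which is even easier); using the symbol estimates $|\partial_x^\alpha\partial_\tau^{\beta_\tau}\partial_\xi^{\beta_\xi}r|\lesssim\langle\tau\rangle^{m_\tau-\beta_\tau}\langle\xi\rangle^{m_\xi-|\beta_\xi|}\|r\|_{S^\me_{1,0};N}$ together with $\langle 2^l\tau'\rangle\sim 2^l$, $\langle 2^j\xi'\rangle\sim 2^j$ on the annulus, one obtains
\begin{align*}
|\partial_x^\alpha\partial_{\tau'}^{\beta_\tau}\partial_{\xi'}^{\beta_\xi}\tilde r(x,\tau',\xi')|\lesssim 2^{m_\tau l+m_\xi j}\|r\|_{S^\me_{1,0};N},
\end{align*}
for all multi-indices up to order $N$, with an implicit constant independent of $l,j$.

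Next, the rescaled operator is $2^{m_\tau l+m_\xi j}$ times a pseudodifferential operator in $x$-form with symbol in $S^{(0,0)}_{1,0}$ that is, moreover, uniformly bounded in the relevant $S^{(0,0)}_{1,0}$-seminorms; the point is that its symbol is supported in a fixed compact frequency annulus and its $S^0_{1,0}$-seminorms are controlled by $\|r\|_{S^\me_{1,0};N}$ uniformly in $l,j$. I would then invoke the standard $L^p$-boundedness of $x$-form pseudodifferential operators with symbols in $S^0_{1,0}$ (the Calderón–Vaillancourt / Coifman–Meyer theory, in the compactly frequency-supported case this is elementary and reduces to a kernel estimate), with operator norm bounded by finitely many seminorms $\|\cdot\|_{S^0_{1,0};N}$ for some $N=N(d)$. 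Undoing the rescaling is an $L^p$-isometry up to the Jacobian factors which cancel against the normalizing factors in the oscillatory integral, so no $l,j$-dependent loss is incurred. This yields the claimed bound $\|r_{lj}(x,D_{t,x})\|_{L^p\to L^p}\le C2^{m_\tau l+m_\xi j}\|r\|_{S^\me_{1,0};N}$ with $C$ and $N$ depending only on $d$ (and $p$). The $\calm_{TV}$ bound follows because the operator kernel after rescaling is an $L^1$-function in the translation variable with $L^1$-norm controlled by the same seminorms, so convolution against it maps $\calm_{TV}\to\calm_{TV}$; here one must be slightly careful since $x$-form operators are not genuine convolutions, but for compactly frequency-supported symbols the Schwartz kernel $K(x,t-s,x-y)$ is smooth and rapidly decaying in $(t-s,x-y)$ uniformly in $x$, which is enough to bound the total-variation operator norm.

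The main obstacle I anticipate is bookkeeping the $l,j$-uniformity of the constants: one must verify that the symbol seminorms of the rescaled symbol $\tilde r$ genuinely do not blow up as $l,j\to\infty$ — this hinges on the homogeneity structure $\langle 2^l\tau'\rangle\sim 2^l$ being respected by all derivatives, which in turn is exactly what the $S^\me_{1,0}$ estimates (with their negative powers $-\beta_\tau$, $-|\beta_\xi|$ in the differentiated estimate) encode, so there is no loss — but making this rigorous requires a careful Leibniz expansion of derivatives of $\psi(\tau')\phi(\xi')r(x,2^l\tau',2^j\xi')$ and keeping the cutoff derivatives (which are $O(1)$) separate from the rescaled symbol derivatives. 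A secondary technical point is the precise value of $N=N(d)$; I would not optimize it, merely note it can be taken as the number of derivatives required in the pseudodifferential $L^p$-boundedness theorem one cites (e.g. $N=2d+3$ or similar), which depends only on $d$. Finally, the reference to Lemma \ref{lem:pseudo_comp} is not needed here; the proof is self-contained given a black-box pseudodifferential $L^p$-theorem, and indeed the authors cite \cite{Abe12} for exactly such results, so I would follow that route.
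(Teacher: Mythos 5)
Your proposal is correct and takes essentially the same route as the paper: the paper simply cites Lemma 6.20 and Remark 6.21 of Abels \cite{Abe12}, and those are proved precisely by the dyadic rescaling you describe followed by an integration-by-parts kernel estimate giving $\sup_x\|K_{lj}(x;\cdot)\|_{\LR{1}}\lesssim 2^{m_\tau l+m_\xi j}\|r\|_{S^{\me}_{1,0};N}$, from which the $\LR{1}$, $\LR{\infty}$, and $\calm_{TV}$ bounds follow by the Schur test and the $\LR{p}$ bounds by interpolation.

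One small point you gloss over: when you undo the physical-space dilation the $x$-slot of the symbol is also rescaled, producing factors $2^{-j|\alpha|}$ on the $x$-derivatives; these are harmless (they only improve the bound), but strictly speaking the cleanest route is to skip the operator-level conjugation entirely and estimate the Schwartz kernel $K_{lj}(x;w)=\calf^{-1}_{\zeta\to w}[r_{lj}(x,\cdot)]$ directly, changing variables only in the frequency integral; then the Jacobian cancellation you mention is transparent and no claim about $L^p$-isometry under dilation is needed.
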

\begin{proof}
Follows as in Lemma 6.20 and Remark 6.21 in \cite{Abe12}. The dependence on $\|r\|_{S^{\me}_{1,0};N}$ in the bound can be seen by inspecting the proofs, cf.~Remark 5.21 in \cite{Abe12}.
\end{proof}

\section{Optimality of Estimates via Scaling}\label{Scale}
The following lemma based on a scaling argument extends the local case \cite[Lemma 3.1]{GST20} to the case of a fractional porous medium equation. Since the proof is virtually the same as in \cite{GST20}, we omit it here.

\begin{lem}\label{lem:scaling}
 Let $T>0$, $m>1$, $\frex\in (0,2)$, $\mu\in [1,m]$, $p\in [1,\infty)$ and $\sigma_t,\sigma_x \ge 0$. Assume that there is a constant $c=c(d,\frex,m,\mu,p,\sigma_t,\sigma_x)>0$ such that
\begin{align}\label{scale_pme_lp_est}
 \norm{\uf^{[\mu]}}_{\DSR{\sigma_t}{p}(0,T;\DSR{\sigma_x}{p}(\R^d))}^p\le c \vpp{\norm{\uf_0}_{\LR{1}(\R^d)} + \norm{S}_{\LR{1}(0,T;\LR{1}(\R^d))}}
\end{align}
for all solutions $\uf$ to \eqref{pme_sys}. Then, necessarily, 
\begin{align}\label{scaling_const}
\begin{split}
 p &\le \frac{m}{\mu+(m-1)\sigma_t}\le \frac{m}{\mu}, \\
 \sigma_t&\le \frac{m-\mu p}{p(m-1)}\le \frac{m-\mu}{m-1}, \quad \tand\\
 \sigma_x &= \frac{\mu p -1}{p}\frac{\frex}{m-1}\le \frac{\frex (\mu-\sigma_t)}{m}\le \frac{\frex \mu}{m}. 
\end{split}
\end{align}
In particular, if $\sigma_t=\frac{m-\mu}{m-1}$, then $p=1$ and $\sigma_x= \frac{\frex (\mu-1)}{m-1}$.
\end{lem}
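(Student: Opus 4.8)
The plan is to use a scaling argument: if $u$ solves \eqref{pme_sys} with data $u_0, S$, then for $\lambda, \rho > 0$ the rescaled function $u_\lambda(t,x) := \lambda u(\lambda^{m-1}\rho^{-\frex}\, t,\rho^{-1} x)$ (up to the precise exponents dictated by the equation) solves \eqref{pme_sys} with suitably rescaled data and source. First I would compute exactly how the $L^1$ norms of $u_0$ and $S$ scale and how the homogeneous mixed Sobolev-Slobodecki\u{\i} norm $\norm{u^{[\mu]}}_{\DSR{\sigma_t}{p}(0,T;\DSR{\sigma_x}{p}(\R^d))}$ scales; plugging these into the assumed estimate \eqref{scale_pme_lp_est} yields an inequality of the form $\lambda^{a_1}\rho^{a_2} \lesssim \lambda^{b_1}\rho^{b_2} + \lambda^{b_1'}\rho^{b_2'}$ (or with the two terms scaling identically once one fixes the relation between $\lambda$ and $\rho$ forced by the equation), valid for all admissible $\lambda,\rho$. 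Letting $\lambda \to 0$ and $\lambda \to \infty$ (and similarly in $\rho$, or along the constraint curve) then forces the exponents to satisfy equalities/inequalities, which after bookkeeping become exactly \eqref{scaling_const}.

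More concretely, I would carry out the steps as follows. Step one: determine the scaling symmetry. The equation $\partial_t u + (-\Delta)^{\frex/2} u^{[m]} = S$ is invariant under $u \mapsto \lambda u(\lambda^{m-1}\rho^{-\frex} t, \rho^{-1} x)$ with the source transforming as $S \mapsto \lambda^{m}\rho^{-\frex} S(\lambda^{m-1}\rho^{-\frex}t,\rho^{-1}x)$; one checks $\norm{u_{0,\lambda}}_{L^1} = \lambda \rho^d \norm{u_0}_{L^1}$ and $\norm{S_\lambda}_{L^1_{t,x}} = \lambda^m \rho^d \lambda^{-(m-1)}\rho^{\frex}\rho^{-\frex}\cdot\rho^{-\frex}\cdots$ — i.e. I would just carefully track the Jacobians. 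Step two: compute the left side. Since $(u_\lambda)^{[\mu]}(t,x) = \lambda^{\mu} u^{[\mu]}(\lambda^{m-1}\rho^{-\frex}t,\rho^{-1}x)$, and the $\DSR{\sigma_t}{p}$ seminorm in $t$ scales like (time-dilation factor)$^{\sigma_t - 1/p}$ while the $\DSR{\sigma_x}{p}$ seminorm in $x$ scales like (space-dilation factor)$^{\sigma_x}$ times a $\rho^{d/p}$ from the $L^p$ integration, I get an explicit power of $\lambda$ and $\rho$. Step three: insert into \eqref{scale_pme_lp_est}, so that $\lambda^{\alpha}\rho^{\beta} \le c(\lambda^{\gamma_1}\rho^{\delta_1} + \lambda^{\gamma_2}\rho^{\delta_2})$ for all $\lambda,\rho>0$; homogeneity in each of $\lambda$ and $\rho$ separately then forces $\alpha \le \gamma_i$ and $\alpha \ge \gamma_i$ type comparisons (depending on which limit one takes), and one reads off the constraints. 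Step four: I would also exploit that the time interval is $(0,T)$ (finite) rather than $\R$, which forces the time-dilation parameter to be bounded appropriately; this is what turns one of the "equality in $\sigma_x$" relations plus an inequality for $\sigma_t$ into the stated chain in \eqref{scaling_const}, and isolating $p$, $\sigma_t$, $\sigma_x$ gives the three displayed lines, with the final "in particular" clause being the boundary case $\sigma_t = \frac{m-\mu}{m-1}$.

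The main obstacle I anticipate is purely bookkeeping: getting every exponent right simultaneously, since there are two scaling parameters $\lambda$ (amplitude) and $\rho$ (space), the equation ties a third (time) dilation to a combination of these, and the finiteness of the time interval $(0,T)$ imposes a one-sided constraint rather than full invariance. In particular one must be careful that the source term $S$ and the initial datum $u_0$ scale with genuinely different powers, so the right-hand side of \eqref{scale_pme_lp_est} is a \emph{sum} of two monomials in $(\lambda,\rho)$; to extract sharp constraints one must choose the scaling relation between $\lambda$ and $\rho$ so that the two monomials become comparable (or let one dominate), and then send the remaining free parameter to $0$ and $\infty$. Getting the equality $\sigma_x = \frac{\mu p - 1}{p}\frac{\frex}{m-1}$ (as opposed to just an inequality) is exactly the signature of a scaling obstruction that is "tight" in the $\rho$-direction: both sides must scale identically in $\rho$ once one has used up the freedom elsewhere, because otherwise sending $\rho \to 0$ or $\rho\to\infty$ would contradict the bound.

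Since, as the authors note, the argument is essentially identical to \cite[Lemma 3.1]{GST20} with the Laplacian replaced by $(-\Delta)^{\frex/2}$ — which only changes the space/time dilation ratio from $\rho^{2}$ to $\rho^{\frex}$ — I would not reproduce the full computation but rather state the scaling symmetry, record the resulting monomial inequality, and invoke the limiting arguments, which is presumably why the paper omits the proof entirely.
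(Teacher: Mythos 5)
Your proposal takes the same approach as the paper, which cites \cite[Lemma~3.1]{GST20} and omits the details precisely because the argument is the standard scaling computation with $\rho^2$ replaced by $\rho^\frex$. The scaling $u_\lambda(t,x)=\lambda u(\lambda^{m-1}\rho^{-\frex}t,\rho^{-1}x)$ is correct, the seminorm bookkeeping you outline is the right plan, and your observation that the spatial dilation is unconstrained (forcing the \emph{equality} $\sigma_x=\frac{(\mu p-1)\frex}{p(m-1)}$) while the time dilation is one-sided (forcing the \emph{inequality} in $p,\sigma_t$) correctly captures why the constraints come out the way they do.

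One small correction to your anticipated obstacle: when you actually finish the Jacobian computation you will find that $\norm{u_{0,\lambda}}_{L^1(\R^d)}=\lambda\rho^d\norm{u_0}_{L^1}$ and $\norm{S_\lambda}_{L^1_{t,x}}=\lambda\rho^d\norm{S}_{L^1(0,aT;L^1)}$ with $a=\lambda^{m-1}\rho^{-\frex}$, so the two terms carry the \emph{same} $(\lambda,\rho)$-monomial and the right-hand side is not genuinely a sum of two differently-scaling monomials. The only asymmetry between the two data terms lies in the scaled time interval $(0,aT)$ entering the $S$-norm, and this is harmless once you let $a\to\infty$ (or simply take $S\equiv 0$ and work with the Barenblatt profile \eqref{def:ubb}, which is what makes the equality in $\sigma_x$ and the $p\le m/\mu$ threshold sharp, as the subsequent proposition in Section~\ref{Scale} confirms). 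So there is no need to balance two monomials; your remaining plan — fix $a$, vary $\rho$ to get the $\sigma_x$ equality, then send $a\to\infty$ to get the $p$-inequality, and rewrite the chain \eqref{scaling_const} — goes through as you describe.
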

\begin{rem}
 As in the local case, Lemma \ref{lem:scaling} shows that in the whole space, the regularity exponent $\sigma_x\in [\frac{\frex(\mu-1)}{m-1},\frac{\frex\mu}{m}]$ is in a one-to-one correspondence to the integrability exponent $p\in[1,\frac{m}{\mu}]$ via
 \begin{align*}
  \sigma_x= \frac{\mu p -1}{p}\frac{\frex}{m-1}, \quad \tand \quad p= \frac{\frex}{\frex\mu-\sigma_x(m-1)}.
 \end{align*}
\end{rem}


For every $m\in (1,\infty)$ and $\frex\in(0,2)$ there is suitable bounded and H\"older continuous function $f:[0,\infty)\to\R$, such that 
\begin{align}\label{def:ubb}
 \ubb(t,x):=t^{-\alpha}f(|x|t^{-\beta}),
\end{align}
with $\alpha:=\frac{d}{d(m-1)+\frex}$ and $\beta=\frac{\alpha}{d}$ is a self-similar solution to \eqref{pme_sys}, see \cite{Vaz14}.
In the local case, $\ubb$ is called the Barenblatt solution, and we continue to use this name in our non-local situation.
\begin{prop}
 Let $m\in (1,\infty)$ and $\frex\in(0,2)$, and let $\ubb$ be given by \eqref{def:ubb}.
 Then, for $\mu\in [1,m]$,
%
 $\displaystyle \ubb^{[\mu]}\in L^{\frac{m}{\mu}}(0,T;\DSR{\sigma}{\frac{m}{\mu}}(\R^d))$ 
%
implies $\sigma<\frac{\frex\mu}{m}$.
\end{prop}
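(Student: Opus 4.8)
The plan is to combine the self-similar form of $\ubb$ with the exact dilation behaviour of the homogeneous space $\DSR{\sigma}{q}$ and to read off the bound on $\sigma$ from integrability in $t$ near $t=0$. Write $q:=\tfrac{m}{\mu}\in[1,m]$ and let $g(x):=\bigl(f(|x|)\bigr)^{[\mu]}$ be the profile, so that
\[
  \ubb^{[\mu]}(t,x)=t^{-\mu\alpha}\,g\bigl(t^{-\beta}x\bigr),\qquad t>0,\ x\in\R^{d}.
\]
Since $\DSR{\sigma}{q}(\R^{d})$ is a homogeneous space, $\|g(\lambda\,\cdot)\|_{\DSR{\sigma}{q}}=\lambda^{\sigma-d/q}\|g\|_{\DSR{\sigma}{q}}$ for all $\lambda>0$; this follows from the defining double integral (the $\R^{d}$-analogue of \eqref{hom_sob_norm}) by rescaling both integration variables, and is valid for every $\sigma\ge0$. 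Hence, for every $t>0$,
\[
  \bigl\|\ubb^{[\mu]}(t,\cdot)\bigr\|_{\DSR{\sigma}{q}(\R^{d})}=t^{-\mu\alpha-\beta(\sigma-d/q)}\,\|g\|_{\DSR{\sigma}{q}(\R^{d})}.
\]

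I would then determine the profile seminorm. The hypothesis $\ubb^{[\mu]}\in L^{q}(0,T;\DSR{\sigma}{q}(\R^{d}))$ forces $\|g\|_{\DSR{\sigma}{q}(\R^{d})}<\infty$, since otherwise the displayed identity would give $\ubb^{[\mu]}(t,\cdot)\notin\DSR{\sigma}{q}(\R^{d})$ for every $t>0$. Conversely $\|g\|_{\DSR{\sigma}{q}(\R^{d})}>0$: the Barenblatt profile $f$ is bounded and non-constant, hence so is $g=\bigl(f(|\cdot|)\bigr)^{[\mu]}$, and a bounded non-constant function neither vanishes identically nor is a.e.\ equal to a polynomial, so its homogeneous seminorm is positive for every $\sigma\ge0$. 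Thus $0<\|g\|_{\DSR{\sigma}{q}(\R^{d})}<\infty$, and the hypothesis is equivalent to
\[
  \int_{0}^{T}t^{\,q\,(-\mu\alpha-\beta(\sigma-d/q))}\,\dd t<\infty.
\]

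Since the integrand is bounded near $t=T$, this holds precisely when $q\,(-\mu\alpha-\beta(\sigma-d/q))>-1$. Inserting $q=\tfrac{m}{\mu}$, $\alpha=\beta d$ (as $\beta=\alpha/d$) and $\tfrac{1}{\beta}=d(m-1)+\frex$, the inequality collapses after a short computation to $d(m-1)+\sigma\tfrac{m}{\mu}<d(m-1)+\frex$, i.e.\ $\sigma<\tfrac{\frex\mu}{m}$, which is the assertion. I do not expect a real obstacle: the argument is essentially the scaling identity above, and the only points requiring attention are (i) the two-sided bound $0<\|g\|_{\DSR{\sigma}{q}}<\infty$ — finiteness forced by the hypothesis, positivity by nontriviality of $\ubb$ — so that the scaling identity genuinely constrains $\sigma$; and (ii) the bookkeeping of the scaling exponents, in particular the use of $\alpha=\beta d$ and $\beta=(d(m-1)+\frex)^{-1}$.
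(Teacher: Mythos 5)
Your proposal is correct and follows essentially the same route as the paper: both use the self-similar form $\ubb^{[\mu]}(t,x)=t^{-\mu\alpha}g(t^{-\beta}x)$, the exact scaling $\|g(\lambda\cdot)\|_{\DSR{\sigma}{q}}=\lambda^{\sigma-d/q}\|g\|_{\DSR{\sigma}{q}}$ (which the paper derives by explicit change of variables in the Gagliardo seminorm for $\sigma\in(0,1)$ and then by differentiating for $\sigma\in(1,2)$), and integrability of the resulting power of $t$ near $t=0$. Your added observation that $0<\|g\|_{\DSR{\sigma}{q}}<\infty$ must hold under the hypothesis is a small but genuine point of care that the paper leaves implicit.
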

\begin{proof}
 With $F(x):=f(x)^\mu$ we have $\ubb^{[\mu]}(t,x)=t^{-\alpha\mu}F(xt^{-\beta})$. We next observe that, for $\sigma\in (0,1)$ and each $t\ge 0$,
 \begin{align*}
  \norm{\ubb^{[\mu]}(t,\cdot)}_{\DSR{\sigma}{\frac{m}{\mu}}(\R^d)}^{\frac{m}{\mu}} & =\int_{\R^d\times\R^d} \frac{|\ubb^{[\mu]}(t,x)-\ubb^{[\mu]}(t,y)|^{\frac{m}{\mu}}}{|x-y|^{\frac{\sigma m}{\mu}+d}}\dd x \dd y \\
  &=t^{-\alpha m - \beta(\frac{\sigma m}{\mu}+d) + 2d\beta} \norm{F}_{\DSR{\sigma}{\frac{m}{\mu}}(\R^d)}^{\frac{m}{\mu}}.
 \end{align*}
 Hence,
 \begin{align*}
  \norm{\ubb^{[\mu]}}_{L^{\frac{m}{\mu}}(0,T;\DSR{\sigma}{\frac{\mu}{m}}(\R^d))}^{\frac{m}{\mu}} =\norm{t^{-\alpha m - \beta(\frac{\sigma m}{\mu}+d) + 2d\beta}}_{L^1(0,T)} \norm{F}_{\DSR{\sigma}{\frac{m}{\mu}}(\R^d)}^{\frac{m}{\mu}},
 \end{align*}
 which is finite if and only if
 \begin{align*}
  -\alpha m - \beta(\frac{\sigma m}{\mu}+d) + 2d\beta > -1 \quad \tand \quad F\in \DSR{\sigma}{\frac{m}{\mu}}(\R^d).
 \end{align*}
 Hence, necessarily
 \begin{align*}
  m + \frac1d(\frac{\sigma m}{\mu}+d) - 2 < \frac{1}{\alpha} = \frac{d(m-1)+\frex}{d},
 \end{align*}
 which is equivalent to $\sigma <\frac{\frex\mu}{m}$. In the case $\sigma\in (1,2)$ we observe that it holds $\partial_{x_i}\ubb^{[\mu]}(t,x)=t^{-\alpha\mu+\beta}\partial_{x_i}F(xt^{-\beta})$, so that analogous arguments may be applied.
\end{proof}

\section{Averaging Lemmata}\label{AvLem}
For $y\in\R^{d}$ let $h_1(\xi,y):=e^{iy\cdot\xi}-1$ and $h_2(\xi,y):=e^{iy\cdot\xi}-1-i\xi\cdot y$. The formulae
\begin{align*}
    u(x+y)-u(x)&=\calf^{-1}[h_1(\cdot,y)\calf u](x), \\
    u(x+y)-u(x)-y\cdot\nabla u(x)&=\calf^{-1}[h_2(\cdot,y)\calf u](x),
\end{align*}
show that the operator $L$ given in \eqref{def_op} has a representation as a pseudodifferential operator in $x$-form, \ie
\begin{align}\label{fourier-repr-op}
Lu(x)=p(x,D_x)u(x):=\int_{\R^d} e^{ix\cdot\xi} p(x,\xi) [\calf u](\xi) \ddd \xi, \qquad x\in\R^d,
\end{align}
where 
\begin{align}\label{fourier-repr-ker}
p(x,\xi):=-\int_{\R^d} (e^{iy\cdot\xi}-1-\1_{[1,2)}(a)i\xi\cdot y)k(x,y)\dd y, \qquad x\in \R^d, \ \xi\in\R^d,
\end{align}
Observe that there are different sign conventions for the operator $L$, cf.~\cite{AbK09,AAO20}, and we opt for the one of \cite{AAO20}.
\begin{lem}\label{lem:symbol_est}
    Let Assumption \ref{ass_1} be valid for some $\frex\in(0,2)$ and $m\in(1,\infty)$, and let $p$ be given by \eqref{fourier-repr-ker}. 
    \begin{enumerate}
        \item\label{lem:symbol_esti} There holds $p\in S^{(0,\frex)}_{1,0}$.
        \item\label{lem:symbol_estii} There is $c>0$ such that
        \begin{align*}
         \Re p(x,\xi)\ge c|\xi|^{\frex}, \qquad \forall x,\xi\in\R^d.
        \end{align*}
       In particular there exists $M>0$ with
       \begin{align*}
           \left|\frac{\Im p(x,\xi)}{\Re p(x,\xi)}\right|\le M , \qquad \forall x,\xi\in\R^d.
       \end{align*}
        \item\label{lem:symbol_estiii} For $v,\tau\in\R$, $x,\xi\in\R^d$ define
        \begin{align}\label{av_op}
         \diffop_v(x,\tau,\xi) &:= i\tau + \nonl'(v)p(x,\xi).
        \end{align}
        Then there exists $c>0$ such that we have
        \begin{align*}
            |\call_v(x,\tau,\xi)|\ge c(|\tau|+\nonl'(v)|\xi|^\frex), \qquad \forall v,\tau\in\R, \ x,\xi\in\R^d.
        \end{align*}
    \end{enumerate}
\end{lem}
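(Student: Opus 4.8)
The plan is to prove the three parts in order, since each builds on the previous. For part \eqref{lem:symbol_esti}, I would first rewrite the symbol. When $\frex \in (0,1)$, the integrand $e^{iy\cdot\xi}-1$ near $y=0$ is $O(|y|\,|\xi|)$, and the kernel bound $|k(x,y)| \lesssim |y|^{-d-\frex}$ makes the integral converge absolutely; for $\frex \in [1,2)$ the subtracted term $i\xi\cdot y$ makes the integrand $O(|y|^2|\xi|^2)$ near the origin. In both regimes, the far field $|y|>2$ converges because $e^{iy\cdot\xi}-1$ is bounded and $|k|\lesssim |y|^{-d-\frex}$ is integrable there. To get the symbol estimates $|\partial_x^\alpha \partial_\xi^\beta p(x,\xi)| \lesssim \langle\xi\rangle^{\frex - |\beta|}$, I would differentiate under the integral sign: each $\partial_x$ hits $k$ and is controlled by \eqref{ass_ker_1}, while each $\partial_\xi$ brings down a factor of $y$ from $e^{iy\cdot\xi}$, improving the decay in $|y|$ and hence lowering the homogeneity in $|\xi|$ by one (this is the standard Lévy-symbol computation; one splits the $y$-integral at $|y| \sim |\xi|^{-1}$ for $|\xi|$ large, and handles $|\xi| \lesssim 1$ trivially by absolute convergence, which is why one gets $\langle\xi\rangle$ rather than $|\xi|$). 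A routine but careful bookkeeping of these two regimes gives $p \in S^{(0,\frex)}_{1,0}$.

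For part \eqref{lem:symbol_estii}, the key point is the lower bound on the real part. Using the symmetry $k(x,y)=k(x+y,-y)$, one symmetrizes the integral representation \eqref{fourier-repr-ker}: after the change of variables $y\mapsto -y$ in half the integral, the drift term cancels (it is odd) and one is left with
\begin{align*}
\Re p(x,\xi) = \int_{\R^d} (1-\cos(y\cdot\xi))\,\tilde k(x,y)\dd y,
\end{align*}
where $\tilde k$ is a symmetrized kernel still satisfying the lower bound $\tilde k(x,y) \ge c|y|^{-d-\frex}$ (in the case $\frex=1$ the extra assumption $k(x,y)=k(x,-y)$ is what makes the symmetrization work cleanly, handling the critical drift term). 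Then one estimates from below by restricting the domain of integration to the region $|y\cdot\xi| \sim 1$, i.e.\ $|y| \sim |\xi|^{-1}$, where $1-\cos(y\cdot\xi) \gtrsim 1$; on that shell $\tilde k(x,y) \gtrsim |\xi|^{d+\frex}$ and the shell has volume $\sim |\xi|^{-d}$, yielding $\Re p(x,\xi) \gtrsim |\xi|^{\frex}$. The bound on $\snorm{\Im p/\Re p}$ is then immediate: $|\Im p(x,\xi)| \le |p(x,\xi)| \lesssim |\xi|^{\frex}$ by part \eqref{lem:symbol_esti} when $|\xi|$ is bounded away from $0$ — wait, near $\xi = 0$ one needs $\Im p$ to vanish at least linearly while $\Re p \gtrsim |\xi|^{\frex}$ with $\frex < 2$; in fact for $\frex \le 1$ one checks $\Im p(x,\xi) = O(|\xi|)$ is not enough against $|\xi|^{\frex}$ unless $\frex \ge 1$. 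The cleaner route: $|\Im p| \le |p| \lesssim \langle\xi\rangle^{\frex-?}$... Actually the right statement is that $p(x,0)=0$ and both $\Re p, \Im p$ are $\frex$-homogeneous-like, so $|\Im p(x,\xi)| \lesssim |\xi|^\frex$ uniformly (by the same splitting as in part (i), but now one genuinely gets $|\xi|^\frex$ not $\langle\xi\rangle^\frex$ because there's no drift-free constant term), and dividing by $\Re p \gtrsim |\xi|^\frex$ gives the uniform bound $M$.

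Part \eqref{lem:symbol_estiii} is then a short consequence. Writing $\call_v(x,\tau,\xi) = i\tau + \nonl'(v)p(x,\xi)$, since $\nonl'(v) \ge 0$ one has $\Re \call_v = \nonl'(v)\Re p(x,\xi) \ge 0$ and $\Im \call_v = \tau + \nonl'(v)\Im p(x,\xi)$. Using $|z| \ge \tfrac{1}{\sqrt 2}(|\Re z| + |\Im z|)$ — actually more carefully: if $|\tau| \le 2M\nonl'(v)|\xi|^\frex$ then $|\call_v| \ge \Re\call_v = \nonl'(v)\Re p \gtrsim \nonl'(v)|\xi|^\frex \gtrsim |\tau| + \nonl'(v)|\xi|^\frex$; if instead $|\tau| > 2M\nonl'(v)|\xi|^\frex \ge 2\nonl'(v)|\Im p|$ then $|\Im \call_v| \ge |\tau| - \nonl'(v)|\Im p| \ge \tfrac12|\tau|$, and combined with $\Re \call_v \gtrsim \nonl'(v)|\xi|^\frex$ one again gets $|\call_v| \gtrsim |\tau| + \nonl'(v)|\xi|^\frex$. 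This is just the standard sectoriality argument using the uniform bound $M$ from part \eqref{lem:symbol_estii}. The main obstacle is really part \eqref{lem:symbol_esti}: organizing the two-regime ($|\xi|$ small vs.\ large) and two-case ($\frex<1$ vs.\ $\frex \ge 1$) estimates for all derivatives cleanly, and making sure the constants depend only on finitely many $C_{\alpha,\beta}$ as claimed; parts (ii) and (iii) are then relatively mechanical.
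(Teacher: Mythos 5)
Part \eqref{lem:symbol_esti} is where the proposal has a genuine gap. You propose to differentiate under the integral sign and split the $y$-integral at $|y|\sim|\xi|^{-1}$, arguing that each $\partial_\xi$ "brings down a factor of $y$, improving the decay in $|y|$." This is backwards at infinity: bringing down $y$ worsens the far-field decay. Concretely, after $|\beta|$ differentiations in $\xi$ the far-field contribution is controlled by $\int_{|y|>R}|y|^{|\beta|}|y|^{-d-\frex}\,\dd y$, which diverges whenever $|\beta|\ge\frex$ (i.e.\ already for $|\beta|=1$ when $\frex<1$, and for $|\beta|\ge2$ when $\frex\in[1,2)$); this happens for small $|\xi|$ as well, so the "absolute convergence" at $|\xi|\lesssim 1$ also fails for higher derivatives. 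The split alone therefore does not yield an absolutely convergent integral, and differentiation under the integral sign is not directly justified. The missing ingredient is integration by parts in $y$ to exploit the oscillation of $e^{iy\cdot\xi}$. The paper does this cleanly: it uses the identity $\partial_\xi^\beta(\xi^\gamma h(\xi\cdot y))=\partial_y^\gamma(y^\beta h(\xi\cdot y))$ (valid since $h$ depends only on $\xi\cdot y$), introduces a compactly supported cut-off $\eta_R$, integrates by parts in $y$ to move $\partial_y^\gamma$ onto $\eta_R\partial_x^\alpha k$, shows the $\eta_R$-error vanishes as $R\to\infty$, and then rescales to obtain $|\xi^\gamma\partial_x^\alpha\partial_\xi^\beta p(x,\xi)|\lesssim|\xi|^\frex$, from which the symbol class membership follows by induction on $|\beta|=|\gamma|$. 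Without some version of this step your sketch does not close.

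Parts \eqref{lem:symbol_estii} and \eqref{lem:symbol_estiii} are essentially correct and follow the paper's route, with two minor remarks. In (ii), the symmetrization via $k(x,y)=k(x+y,-y)$ is unnecessary: the drift $-i\xi\cdot y$ is purely imaginary, so simply taking the real part of \eqref{fourier-repr-ker} gives $\Re p(x,\xi)=\int(1-\cos(\xi\cdot y))k(x,y)\,\dd y$, and the lower bound \eqref{ass_ker_2} together with the change of variables $y\mapsto y/|\xi|$ (equivalently, your "shell $|y|\sim|\xi|^{-1}$" argument) finishes it; the extra assumption at $\frex=1$ is needed for the symbol estimates of part (i), not here. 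For the quotient bound you eventually arrive at the correct argument — the uniform upper bound $|p(x,\xi)|\lesssim|\xi|^\frex$ (for all $\xi$, which is exactly the $\alpha=\beta=\gamma=0$ case of the estimate the paper proves in part (i)) against $\Re p\gtrsim|\xi|^\frex$. Part (iii) is the standard sectoriality argument and matches the paper; the threshold in your case split should be $2MC$ rather than $2M$ (where $C$ is the implied constant in the upper bound on $|\Im p|$), but this is cosmetic.
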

\begin{proof}
    We provide a proof along the lines of \cite[Lemma 2.13]{AbK09}.
    \begin{enumerate}
        \item Observe that the function $h:\R\to\C$, $h(s):=e^{is}-1-\1_{[1,2)}(a)is$ admits the bound
    \begin{align*}
        h(s)\le C\frac{|s|^j}{1+|s|}, \quad s\in\R,
    \end{align*}
    where $j=1$ if $\frex\in (0,1)$ and $j=2$ if $\frex\in[1,2)$.
    We first give the proof in the case $\frex\ne 1$.
    Let $N\in \N$ and $\alpha,\beta,\gamma\in \N_0^d$ with $|\alpha|\le N$ and $|\beta|=|\gamma|=\ell\le N$.
    Moreover, let $\eta\in \CRci(\R^d)$ be such that $0\le \eta\le 1$, $\eta(y)=1$ for $y\in B_1(0)$, and $\supp\eta\subset B_2(0)$. For $R>0$ define $\eta_R(y):=\eta(R^{-1}y)$. 
    Then for all $x\in\R^d$ and $\xi\in\R^d\setminus\set{0}$ we have
    \begin{align}\label{symb_est_1}
    \begin{split}
        -\int_{\R^d} \partial_\xi^\beta &(\xi^\gamma h(\xi\cdot y)) \eta_R(y) \partial_x^\alpha k(x,y)\dd y =-\int_{\R^d} \partial_y^\gamma (y^\beta h(\xi\cdot y)) \eta_R(y) \partial_x^\alpha k(x,y)\dd y \\
        &=(-1)^{\ell+1} \sum_{\gamma_1+\gamma_2=\gamma}\int_{\R^d} y^\beta h(\xi\cdot y) \partial_y^{\gamma_1}\eta_R(y)\partial_y^{\gamma_2} \partial_x^\alpha k(x,y)\dd y.
    \end{split}
    \end{align}
    Whenever $\gamma_1+\gamma_2=\gamma$ and $\gamma_1\ne 0$, we have that $\partial_y^\gamma\eta$ is supported on the closure of $B_2(0)\setminus B_1(0)$, and thus \eqref{ass_ker_1} yields
    \begin{align*}
       &\left|\int_{\R^d} y^\beta h(\xi\cdot y) \partial_y^{\gamma_1}\eta_R(y)\partial_y^{\gamma_2} \partial_x^\alpha k(x,y)\dd y\right| \\
       &\quad \le C\int_{B_{2R}(0)\setminus B_R(0)} |y|^\ell |y|^{j-1} R^{-|\gamma_1|}|\partial_y^{\gamma_1}\eta(R^{-1}y)||y|^{-d-\frex-|\gamma_2|}\dd y \\
       &\quad = C R^{j-1-\frex}\int_{B_2(0)\setminus B_1(0)}|z|^{\ell+j-1-d-\frex-|\gamma_2|}\eta(z) \dd z \to 0 \quad \text{as } R\to \infty.  
    \end{align*}
    For the term $\gamma_1+\gamma_2=\gamma$ in \eqref{symb_est_1} with $\gamma_1=0$ we have by dominated convergence
    \begin{align*}
        (-1)^{\ell+1} \int_{\R^d} y^\beta & h(\xi\cdot y) \eta_R(y)\partial_y^{\gamma} \partial_x^\alpha k(x,y)\dd y \to (-1)^{\ell+1} \int_{\R^d} y^\beta h(\xi\cdot y) \partial_y^{\gamma} \partial_x^\alpha k(x,y)\dd y \\
        &=(-1)^{\ell+1} |\xi|^{-d-\ell}\int_{\R^d} y^\beta h(\frac{\xi}{|\xi|}\cdot y) \partial_y^\gamma \partial_x^\alpha k(x,\frac{y}{|\xi|})\dd y.
    \end{align*}
    Since the left-hand side in \eqref{symb_est_1} tends to $\partial_x^\alpha \partial_\xi^\beta\xi^\gamma p(x,\xi)$, we have
    \begin{align*}
        |\partial_x^\alpha\partial_\xi^\beta \xi^\gamma p(x,\xi)|\le C|\xi|^{-d-\ell} \int_{\R^d} |y|^\ell\frac{|y|^{j}}{1+|y|} \left|\frac{y}{|\xi|}\right|^{-d-\frex-\ell} \dd y = C' |\xi|^{\frex}.
    \end{align*}
    By induction on $\ell$ we thus infer
    \begin{align*}
        |\xi^\gamma\partial_x^\alpha \partial_\xi^\beta p(x,\xi)|\le C' |\xi|^{\frex}
    \end{align*}
    for all $\alpha,\beta,\gamma\in\N_0^d$ with $|\alpha|\le N$ and $|\beta|=|\gamma|=\ell\le N$.
    This is the claimed estimate for $\frex\ne 1$.
    
    \medskip
    
    In the case $\frex=1$ we have the additional assumption $k(x,y)=k(x,-y)$, which in particular implies by a similar cut-off argument as above
     \begin{align*}
        \partial_x^\alpha \partial_\xi^\beta \xi^\gamma p(x,\xi)&= (-1)^{|\beta|+1} |\xi|^{-d-\ell}\int_{|y|\ge 1} y^\beta h(\frac{\xi}{|\xi|}\cdot y) \partial_y^\gamma \partial_x^\alpha k(x,\frac{y}{|\xi|})\dd y \\
        &\quad + (-1)^{|\beta|+1} |\xi|^{-d-\ell}\int_{|y|<1} y^\beta \left(e^{i\frac{\xi}{|\xi|}\cdot y}-1\right) \partial_y^\gamma \partial_x^\alpha k(x,\frac{y}{|\xi|})\dd y.
    \end{align*}
   Therefore we obtain also in the case $\frex=1$ the estimate
    \begin{align*}
        |\partial_x^\alpha \partial_\xi^\beta \xi^\gamma p(x,\xi)|\le C|\xi|^{-d-\ell} \int_{\R^d} |y|^\ell\frac{|y|^{2}}{1+|y|^2} \left|\frac{y}{|\xi|}\right|^{-d-1-\ell} \dd y = C' |\xi|.
    \end{align*}
    The rest of the proof is identical to the case $\frex\ne 1$.
    \item By \eqref{ass_ker_2} and the change of variables formula we have for all $|\xi|>0$
    \begin{align*}
        \Re p(x,\xi)&=\int_{\R^d} (1-\cos(\xi\cdot y))k(x,y)\dd y\ge c\int_{B_{1/|\xi|}(0)} (1-\cos(\xi\cdot y))|y|^{-d-\frex}\dd y \\
        &=c|\xi|^\frex\int_{B_{1}(0)} (1-\cos(\frac{\xi}{|\xi|}\cdot z))|z|^{-d-\frex}\dd z \ge c'|\xi|^\frex,
    \end{align*}
    where we used that the value of the integral $\int_{B_{1}(0)} (1-\cos(\omega\cdot z)|z|^{-d-\frex}\dd z$ is independent of the unit vector $\omega$.
   The assertion about the quotient of the imaginary and real part now follows by combining the upper with the lower bound.
    \item 
   Since $\frac{|\Im p(x,\xi)|}{\Re p(x,\xi)}\le M$ for $|\xi|>0$, the symbol $p(x,\xi)$ is contained in a sector of the complex plane of angle $\arctan M<\frac\pi 2$, and thus so is $\nonl'(v)p(x,\xi)$ due to $\nonl'(v)\ge 0$.
   This sector does not contain the imaginary axis, and consequently the complex number $\call_v(x,\tau,\xi)=i\tau+\nonl'(v)p(x,\xi)$ is at least of comparable size to both $i\tau$ and $\nonl'(v)p(x,\xi)$.
    \end{enumerate}
\end{proof}
The next lemma shows that for all $v\in \R\setminus\set{0}$ the operators $\call_v(x,D_{t,x})$ and $\nonl'(v)p(x,D_x)$ possess a parametrix, i.e.~they can be inverted up to a lower order error term, with precise bounds in terms of $v$ on the corresponding symbols.
We note that due to Lemma \ref{lem:pseudo_comp} the parametrices and their errors can be written in $y$-form, a fact that we will exploit heavily in Lemma \ref{lem:av} and Lemma \ref{lem:av3} below.
The idea of finding the parametrices is standard (see for example \cite{Abe12}).
Since we will have to be precise about the $v$-dependence of the bounds, we include a full argument here. 
\begin{lem}\label{lem:FM}
 Suppose  that Assumption \ref{ass_1} is valid for some $\frex\in(0,2)$ and $m\in(1,\infty)$.
 Define $p(x,\xi)$ via \eqref{fourier-repr-ker}.
 Let $\call_v(x,\tau,\xi):=i\tau+\nonl'(v)p(x,\xi)$. Then for each $v\in \R$ with $\nonl'(v)\ne 0$ there are symbols $\calm_v,\calr_v\in S^{(0,0)}_{1,0}$ (explicitly given by \eqref{lem:FMe001} below) such that 
 \begin{align*}
  \calm_v(D_{t,x},x)\call_v(x,D_{t,x})=\id - \calr_v(D_{t,x},x)   
 \end{align*}
 and subject to the following additional smoothing properties:
 For each $N\in\N$ there is $c>0$ such that
 \begin{align*}
     \|\calm_v\|_{S^{(0,-\frex)}_{1,0;N}}\le c\nonl'(v)^{-1}, \quad
     \|\partial_v\calm_v\|_{S^{(0,-\frex)}_{1,0;N}}\le c\frac{|\nonl''(v)|}{\nonl'(v)^{2}}, \quad
     \|\calr_v\|_{S^{(0,-\frex)}_{1,0;N}}\le c,
 \end{align*}
 and
 \begin{align*}
     \|\calm_v\|_{S^{(-1,0)}_{1,0;N}}\le c, \quad
     \|\partial_v\calm_v\|_{S^{(-2,\frex)}_{1,0;N}}\le c|\nonl''(v)|, \quad
     \|\calr_v\|_{S^{(-1,0)}_{1,0;N}}\le c(1+\nonl'(v)),
 \end{align*}
 where the bounds on $\partial_v\calm_v$ are true for almost all $v\in\R$.
 Moreover, there are symbols $\td\calm_v,\td\calr_v\in S^{(0,0)}_{1,0}$ independent of $\tau$ such that
 \begin{align*}
  \td\calm_v(D_x,x)\nonl'(v)p(x,D_x)=\id - \calr_v(D_x,x)    
 \end{align*}
 and such that for each $N\in\N^2$ there is $c>0$ with
 \begin{align*}
     \|\td\calm_{v}\|_{S^{(0,-\frex)}_{1,0;N}}\le c \nonl'(v)^{-1}, \quad
     \|\partial_v \td\calm_{v}\|_{\CR{\he}S^{(0,-\frex)}_{1,0;N}}\le c \frac{|\nonl''(v)|}{\nonl'(v)^2}, \quad
     \|\td\calr_v\|_{S^{(0,-\frex)}_{0,1};N}\le c.
 \end{align*}
\end{lem}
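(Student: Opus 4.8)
The plan is to carry out the classical parametrix construction for elliptic pseudodifferential operators, the only nonstandard feature being that the dependence on $\nonl'(v)$ has to be tracked through every symbol estimate. Ellipticity is exactly the content of Lemma~\ref{lem:symbol_est}\eqref{lem:symbol_estiii}, which gives $|\call_v(x,\tau,\xi)|\gtrsim|\tau|+\nonl'(v)|\xi|^{\frex}$, and, for the $\tau$-free operator, of Lemma~\ref{lem:symbol_est}\eqref{lem:symbol_estii}, which gives $|\nonl'(v)p(x,\xi)|\gtrsim\nonl'(v)|\xi|^{\frex}$ together with a uniform bound on $|\Im p/\Re p|$. I would therefore fix a smooth cutoff $\rho=\rho(\tau,\xi)$ equal to $1$ on a neighbourhood of the origin of $\R^{d+1}$, set $\calm_v:=(1-\rho)\,\call_v^{-1}$, and --- only when $\frex>1$, where one step of the construction gains just one order in $\xi$ --- add the next term of the standard parametrix series, so that the eventual error becomes genuinely of order $(0,-\frex)$; the $\tau$-free parametrix $\td\calm_v:=(1-\phi_0(\xi))(\nonl'(v)p(x,\xi))^{-1}$ (plus one correction term if $\frex>1$) is built the same way with $\phi_0(\xi)$ in place of $\rho$. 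The cutoff is needed only to cope with the degeneracy of $\call_v$ at $(\tau,\xi)=0$, which is forced by $p(x,0)=0$: on the support of $1-\rho$ the denominator is bounded below, so these quotients are genuine smooth symbols, and they are the explicit expressions referred to in the statement.

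The symbol bounds on $\calm_v$ then follow from Leibniz' and the quotient rule: every derivative of $(1-\rho)\call_v^{-1}$ is a finite sum of terms $\partial^{\gamma_0}(1-\rho)\,\call_v^{-1-k}\prod_{i=1}^{k}\partial^{\gamma_i}\call_v$, in which each $\xi$-derivative of $\call_v$ equals $\nonl'(v)\partial_\xi p$ and is controlled through \eqref{ass_ker_1}--\eqref{ass_ker_2} by Lemma~\ref{lem:symbol_est}\eqref{lem:symbol_esti}, while $\partial_\tau\call_v=i$ and $\partial_\tau^2\call_v=0$; using $|\call_v|\gtrsim|\tau|+\nonl'(v)|\xi|^{\frex}$ and reading off the two homogeneities yields $\calm_v\in S^{(0,-\frex)}_{1,0}\cap S^{(-1,0)}_{1,0}$ with seminorms $\lesssim\nonl'(v)^{-1}$, resp.\ $\lesssim1$, and the identical computation gives the bounds on $\td\calm_v$. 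The $\partial_v$-estimates come from the single identity $\partial_v\calm_v=-(1-\rho)\,\nonl''(v)\,p\,\call_v^{-2}$ (and its analogues for the correction term and for $\td\calm_v$): this costs one extra power $\call_v^{-1}$ --- a factor $\nonl'(v)^{-1}$ in the first scale and a shift by $(-1,0)$ in the second --- together with one extra factor $p\in S^{(0,\frex)}_{1,0}$, which gives exactly $\lesssim|\nonl''(v)|\nonl'(v)^{-2}$ in $S^{(0,-\frex)}_{1,0}$ and $\lesssim|\nonl''(v)|$ in $S^{(-2,\frex)}_{1,0}$; since $\nonl\in\WSRloc{2}{1}(\R)$ these differentiated symbols are defined only for almost every $v$, as stated, and the Hölder-in-$v$ bound for $\td\calm_v$ is read off the same explicit formula.

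Finally I would compose: using the composition calculus of Lemma~\ref{lem:pseudo_comp}, including its conversions between the $x$- and $y$-forms, the operator $\calm_v(D_{t,x},x)\call_v(x,D_{t,x})$ is again pseudodifferential, with leading symbol $\calm_v\call_v=1-\rho$ --- that is, $\id$ minus a compactly supported, hence infinitely smoothing, term --- while every further term of the asymptotic expansion is genuinely lower order in $\xi$; the key simplification is that the $i\tau$-summand of $\call_v$ is $x$-independent, so it enters only through the leading term and the remaining terms keep a factor $\nonl'(v)\partial_x p$. Collecting the cutoff term and these lower-order contributions (whose tail is controlled by the remainder estimate in Lemma~\ref{lem:pseudo_comp}) into $\calr_v$ gives $\calm_v(D_{t,x},x)\call_v(x,D_{t,x})=\id-\calr_v(D_{t,x},x)$ with $\calr_v$ of order $(0,-\frex)$, hence seminorm $\lesssim1$, and of seminorm $\lesssim1+\nonl'(v)$ in the $S^{(-1,0)}$-scale, since each surviving term carries a product $(\partial_\xi^\alpha\calm_v)(\nonl'(v)D_x^\alpha p)$ that gains an order in $\xi$ at the price of at most one power of $\nonl'(v)$; the $\tau$-free statement is identical with $\phi_0(D_x)$ in place of $\rho$, yielding $\td\calm_v,\td\calr_v$. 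The main obstacle is exactly this accounting: the cutoff must be placed and the quotient-rule estimates organised so that the powers of $\nonl'(v)$ come out simultaneously correct in \emph{both} symbol scales, and so that the cutoff error together with the parametrix remainder lands in precisely $S^{(0,-\frex)}_{1,0}$, respectively $S^{(-1,0)}_{1,0}$; apart from this, no analytic ingredient beyond Lemmata~\ref{lem:symbol_est} and~\ref{lem:pseudo_comp} and elementary differentiation is required.
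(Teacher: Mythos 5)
Your plan follows the same overall strategy as the paper — an explicit parametrix built by cutting off $\call_v^{-1}$ away from the degeneracy, $v$-dependence tracked through the quotient rule, and errors assembled via the composition calculus of Lemma~\ref{lem:pseudo_comp} plus conversion to $y$-form. However, the specific cutoff you choose does not yield the stated quantitative seminorm bounds, and this is a genuine gap.

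You set $\calm_v:=(1-\rho)\,\call_v^{-1}$ with $\rho$ equal to $1$ on a neighbourhood of the origin of $\R^{d+1}$, so that $1-\rho\equiv1$ outside some ball. But the bound $\|\calm_v\|_{S^{(0,-\frex)}_{1,0;N}}\lesssim\nonl'(v)^{-1}$ requires $|\calm_v(x,\tau,\xi)|\lesssim\nonl'(v)^{-1}\langle\xi\rangle^{-\frex}$ for \emph{all} $(\tau,\xi)$. On the region $\{|\xi|\le 1,\ |\tau|\gg1\}$ your cutoff is inactive ($1-\rho=1$), $\langle\xi\rangle^{-\frex}\sim1$, and $|\call_v|\sim|\tau|+\nonl'(v)|\xi|^{\frex}$, so $|\calm_v|\sim\bigl(|\tau|+\nonl'(v)|\xi|^{\frex}\bigr)^{-1}$; sending $\nonl'(v)\to\infty$ with $\tau$ and $\xi$ fixed makes this much larger than $\nonl'(v)^{-1}$. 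Symmetrically, the bound $\|\calm_v\|_{S^{(-1,0)}_{1,0;N}}\lesssim1$ requires $|\calm_v|\lesssim\langle\tau\rangle^{-1}$, and on $\{|\tau|\le1,\ |\xi|\gg1\}$ you get $|\calm_v|\sim\bigl(\nonl'(v)|\xi|^{\frex}\bigr)^{-1}$, which blows up as $\nonl'(v)\to0$. Both quantitative scales therefore fail on the two ``cross'' regions that a radial cutoff leaves uncovered, so your $\calm_v$ does not lie in the symbol classes with the claimed $v$-weights.

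The repair — and what the paper actually uses in \eqref{lem:FMe001} — is a \emph{product} cutoff: $\calm_v'(x,\tau,\xi):=\frac{(1-\psi_0(\tau))(1-\phi_0(\xi))}{\call_v(x,\tau,\xi)}$, which vanishes whenever $|\tau|\le1$ \emph{or} $|\xi|\le1$. On its support one may invoke $|\call_v|\gtrsim\nonl'(v)|\xi|^{\frex}\sim\nonl'(v)\langle\xi\rangle^{\frex}$ for the $\xi$-scale and $|\call_v|\gtrsim|\tau|\sim\langle\tau\rangle$ for the $\tau$-scale, so both families of bounds hold simultaneously. Once this is fixed, the rest of your outline matches the paper: the $\partial_v$-bounds come from $\partial_v\calm_v'=-\nonl''(v)\,p\,\call_v^{-2}\cdot(1-\psi_0)(1-\phi_0)$; the composition calculus gives a first error $\calr_v'$; and one Neumann iteration $\calm_v''=\calm_v'-\calr_v'\#\calm_v'$, $\calr_v''=\calr_v'\#\calr_v'$ lands the error in $S^{(0,-2)}\cap S^{(-1,0)}\subset S^{(0,-\frex)}$ with the correct weights (via interpolating the $S^{(0,-1)}$ and $S^{(-1,\frex-1)}$ bounds on $\calr_v'$ to $S^{(-1/2,0)}$ with weight $\nonl'(v)^{1/2}$). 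The paper performs this second iteration for every $\frex\in(0,2)$ rather than only for $\frex>1$; your case distinction is consistent with the stated bounds, but doing it unconditionally avoids splitting into cases and also cleanly produces the $1+\nonl'(v)$ factor in the $S^{(-1,0)}$-seminorm of $\calr_v$, which your sketch asserts but does not fully account for when $\frex>1$.
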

\begin{proof}
Let $v\in \R\setminus\set{0}$ be such that $\nonl'(v)\ne 0$.
Define the following symbols:
 \begin{align}
 \begin{split}\label{lem:FMe001}
     \calm_v'(x,\tau,\xi)&:=\frac{1-\psi_0(\tau)-\phi_0(\xi)+\psi_0(\tau)\phi_0(\xi)}{\call_v(x,\tau,\xi)}, \\
     \calr_{1,v}(x,\tau,\xi)&:=(\calm_v'\# \call_v)(x,\tau,\xi)-(\calm_v'\call_v)(x,\tau,\xi), \\
     \calr_2(\tau,\xi)&:=(\calm_v'\call_v)(x,\tau,\xi)-1=\psi_0(\tau)+\phi_0(\xi)-\psi_0(\tau)\phi_0(\xi),\\
     \calr_v'(x,\tau,\xi)&:=\calr_{1,v}(x,\tau,\xi)+\calr_2(\tau,\xi), \\
     \calm_{v}''(x,\tau,\xi)&:=\calm_v'(x,\tau,\xi) - (\calr_v'\#\calm_v')(x,\tau,\xi), \\
     \calr_v''(x,\tau,\xi)&:=(\calr_v'\#\calr_v')(x,\tau,\xi), \\
     \calm_v(x,\tau,\xi)&:=(\calm_v'')_R(x,\tau,\xi), \\
     \calr_v(x,\tau,\xi)&:=(\calr_v'')_R(x,\tau,\xi).
 \end{split}
 \end{align}
By Lemma \ref{lem:symbol_est} we have for $\alpha,\beta\in\N_0^d$ which do not both vanish
 \begin{align*}
  \|\partial_x^\beta\partial_\xi^{\alpha}\call_v(\cdot,\tau,\xi)\|_{\LR{\infty}(\R^d)} &= \|\nonl'(v)\partial_x^\beta\partial_\xi^{\alpha}p(\cdot,\xi)\|_{\LR{\infty}(\R^d)}\lesssim \nonl'(v)|\xi|^{\frex-\alpha}, 
 \end{align*}
 and
 \begin{align*}
     |\call_v(x,\tau,\xi)|=|i\tau+\nonl'(v)p(x,\xi)|\gtrsim |\tau| + \nonl'(v)|\xi|^\frex,
     \qquad \tau\in\R, \ x,\xi\in\R^d.
 \end{align*}
 Here and in the rest of the proof, the implicit constant implied by the notation $\lesssim$ may depend on $N\in \N$, but is independent of $v$.
 Let $s\in[0,1]$.
 Then
 \begin{align}\label{eq:faadibruno_e1}
     \left\|\frac1{\call_v(\cdot,\tau,\xi)}\right\|_{\LR{\infty}(\R^d)} \lesssim
     |\tau|^{s-1}\nonl'(v)^{-s}|\xi|^{-s\frex}.
 \end{align}
 Analogously, $\partial_v\call(x,\tau,\xi)=\nonl''(v)p(x,\xi)$ implies
 \begin{align}\label{eq:faadibruno_e2}
     \left\|\partial_v\frac1{\call_v(\cdot,\tau,\xi)}\right\|_{\LR{\infty}(\R^d)} \lesssim
     |\tau|^{2(s-1)}|\nonl''(v)|\nonl'(v)^{-2s}|\xi|^{(1-2s)\frex}.
 \end{align}
 We recall from the multivariate formula of Fa\`{a} di Bruno \cite{Har06} that we have for all $0\ne \alpha\in\N_0^d$
 \begin{align}\label{eq:faadibruno}
  \partial_\xi^{\alpha} \frac{1}{f(\xi)}&=\frac{c_\alpha}{f(\xi)}\sum_{\beta_1+\ldots +\beta_{|\alpha|}=\alpha}\prod_{j=1}^{|\alpha|}\frac{\partial_\xi^{\beta_j}f(\xi)}{f(\xi)}.
 \end{align}
 Applied to the function $f(\xi):=\call_v(x,\tau,\xi)$ we obtain from the formulae \eqref{eq:faadibruno} and \eqref{eq:faadibruno_e1}, and from $\partial_\tau\call_v(x,\tau,\xi)=i$, that for $|\tau|\ge 1$ and $|\xi|\ge 1$ there holds
 \begin{align*}
  \left\|\partial_x^\beta\partial_\tau^{\alpha_\tau}\partial_\xi^{\alpha_\xi} \frac1{\call_v(\cdot,\tau,\xi)}\right\|_{\LR{\infty}(\R^d)}&\lesssim \left\|\frac1{\call_v(\cdot,\tau,\xi)}\right\|_{\LR{\infty}(\R^d)}|\tau|^{-|\alpha_\tau|}|\xi|^{-|\alpha_\xi|} \\
  &\lesssim \nonl'(v)^{-s}\langle\tau\rangle^{s-1-|\alpha_\tau|}\langle \xi\rangle^{-s\frex-|\alpha_\xi|}.
 \end{align*}
 Since $\calm_v'=\frac{1-\psi_0-\phi_0+\psi_0\phi_0}{\call_v}$, we have $\calm_v'(x,\tau,\xi)=0$ for all $|\tau|\le 1$ or $|\xi|\le 1$, so that
 \begin{align*}
     \|\calm_v'\|_{S^{(s-1,-s\frex)}_{1,0;N}}\lesssim \nonl'(v)^{-s}.
 \end{align*}
 Applying $\partial_v$ to \eqref{eq:faadibruno} with $f(\xi):=\call_v(x,\tau,\xi)$, we also obtain from the product rule and \eqref{eq:faadibruno_e2}, that
 \begin{align*}
     \|\partial_v\calm_v'\|_{S^{(2(s-1),(1-2s)\frex)}_{1,0;N}}\lesssim |\nonl''(v)|\nonl'(v)^{-2s}.
 \end{align*}
Hence, by Lemma \ref{lem:pseudo_comp} we have
\begin{align*}
    \|\calr_{1,v}\|_{S^{(0,-1)}_{1,0;N}}&\lesssim \sum_{|\alpha|=1}\|\partial_\xi^\alpha \calm_v'\|_{S^{(0,-\frex-1)}_{1,0;N}}\|D_x^\alpha\call_v\|_{S^{(0,\frex)}_{1,0;N}}\le \sum_{|\alpha|=1}\|\calm_v'\|_{S^{(0,-\frex)}_{1,0;N}}\|D_x^\alpha\call_v\|_{S^{(0,\frex)}_{1,0;N}}\lesssim 1, \\
    \|\calr_{1,v}\|_{S^{(-1,\frex-1)}_{1,0;N}}&\le \sum_{|\alpha|=1}\|\partial_\xi^\alpha\calm_v'\|_{S^{(-1,-1)}_{1,0;N}}\|D_x^\alpha\call_v\|_{S^{(0,\frex)}_{1,0;N}} \le \sum_{|\alpha|=1}\|\calm_v'\|_{S^{(-1,0)}_{1,0;N}}\|D_x^\alpha\call_v\|_{S^{(0,\frex)}_{1,0;N}}\lesssim \nonl'(v),
\end{align*}
as well as
\begin{align*}
    \|\partial_v\calr_{1,v}\|_{S^{(0,-1)}_{1,0;N}}&\le \sum_{|\alpha|=1}\left(\|\partial_\xi^\alpha \partial_v\calm_v'\|_{S^{(0,-\frex-1)}_{1,0;N}}\|D_x^\alpha\call_v\|_{S^{(0,\frex)}_{1,0;N}} + \|\partial_\xi^\alpha \calm_v'\|_{S^{(0,-\frex-1)}_{1,0;N}}\|D_x^\alpha\partial_v\call_v\|_{S^{(0,\frex)}_{1,0;N}}\right)\\
    &\le\sum_{|\alpha|=1}\left(\|\partial_v\calm_v'\|_{S^{(0,-\frex)}_{1,0;N}}\|D_x^\alpha\call_v\|_{S^{(0,\frex)}_{1,0;N}} + \|\calm_v'\|_{S^{(0,-\frex)}_{1,0;N}}\|D_x^\alpha\partial_v\call_v\|_{S^{(0,\frex)}_{1,0;N}}\right)\\
    &\lesssim \left(\frac{|\nonl''(v)|}{\nonl'(v)^2}\nonl'(v) + \frac{1}{\nonl'(v)}|\nonl''(v)|\right)\lesssim \frac{|\nonl''(v)|}{\nonl'(v)},\\
    \|\partial_v\calr_{1,v}\|_{S^{(-1,\frex-1)}_{1,0;N}}
    &\lesssim \sum_{|\alpha|=1}\left(\|\partial_\xi^\alpha \partial_v\calm_v'\|_{S^{(-1,-1)}_{1,0;N}}\|D_x^\alpha\call_v\|_{S^{(0,\frex)}_{1,0;N}} + \|\partial_\xi^\alpha \calm_v'\|_{S^{(-1,-1)}_{1,0;N}}\|D_x^\alpha\partial_v\call_v\|_{S^{(0,\frex)}_{1,0;N}}\right)\\
    &\le \sum_{|\alpha|=1}\left(\|\partial_v\calm_v'\|_{S^{(-1,0)}_{1,0;N}}\|D_x^\alpha\call_v\|_{S^{(0,\frex)}_{1,0;N}} + \|\calm_v'\|_{S^{(-1,0)}_{1,0;N}}\|D_x^\alpha\partial_v\call_v\|_{S^{(0,\frex)}_{1,0;N}}\right)\\
    &\lesssim \left(\frac{|\nonl''(v)|}{\nonl'(v)}\nonl'(v) + |\nonl''(v)|\right)\lesssim |\nonl''(v)|.
\end{align*}
From $\frex\le 2$ we also infer
\begin{align*}
    \|\calr_{1,v}\|_{S^{(-\frac12,0)}_{1,0;N}}\le \|\calr_{1,v}\|_{S^{(-\frac12,\frac{\frex}{2}-1)}_{1,0;N}}\le \|\calr_{1,v}\|_{S^{(-1,\frex-1)}_{1,0;N}}^\frac12\|\calr_{1,v}\|_{S^{(0,-1)}_{1,0;N}}^\frac12 \lesssim \nonl'(v)^\frac12.
\end{align*}
Since $\calr_{2}(x,\tau,\xi)=\psi_0(\tau)+\phi_0(\xi)-\psi_0(\tau)\phi_0(\xi)$ is compactly supported (and independent of $v\in \R$), it holds $\|\calr_2\|_{S^{(0,-1)}_{1,0;N}} + \|\calr_2\|_{S^{(-\frac12,0)}_{1,0;N}}\lesssim 1$.
Together, we obtain for $\calr_v'=\calr_{1,v}+\calr_2$ that
\begin{align*}
\|\calr_v'\|_{S^{(0,-1)}_{1,0;N}}&\lesssim 1, \quad \|\calr_v'\|_{S^{(-\frac12,0)}_{1,0;N}}\lesssim 1+\nonl'(v)^\frac12, \\
\|\partial_v\calr_v'\|_{S^{(0,-1)}_{1,0;N}}& \lesssim \frac{|\nonl''(v)|}{\nonl'(v)}, \quad
\|\partial_v\calr_v'\|_{S^{(-1,\frex-1)}_{1,0;N}}\lesssim |\nonl''(v)|.
\end{align*}
Next we use Lemma \ref{lem:pseudo_comp} to obtain
\begin{align*}
    \|\calr_v''\|_{S^{(0,-\frex)}_{1,0;N}}&\le \|\calr_v''\|_{S^{(0,-2)}_{1,0;N}}\le \|\calr_v'\|_{S^{(0,-1)}_{1,0;N}}^2\lesssim 1, \\
    \|\calr_v''\|_{S^{(-1,0)}_{1,0;N}}&\lesssim \|\calr_v'\|_{S^{(-\frac12,0)}_{1,0;N}}^2 \lesssim (1+\nonl'(v)^\frac12)^2\lesssim 1+\nonl'(v),
\end{align*}
as well as
\begin{align*}
    \|\calm_v''\|_{S^{(0,-\frex)}_{1,0;N}}&\le (1+\|\calr_v'\|_{S^{(0,0)}_{1,0;N}})\|\calm_v'\|_{S^{0,-\frex}_{1,0;N}}\lesssim (1+\|\calr_v'\|_{S^{(0,-1)}_{1,0;N}})\|\calm_v'\|_{S^{0,-\frex}_{1,0;N}}\lesssim \nonl'(v)^{-1}, \\
    \|\calm_v''\|_{S^{(-1,0)}_{1,0;N}}&\lesssim (1+\|\calr_v'\|_{S^{(0,0)}_{1,0;N}})\|\calm_v'\|_{S^{(-1,0)}_{1,0;N}}\lesssim (1+\|\calr_v'\|_{S^{(0,-1)}_{1,0;N}})\|\calm_v'\|_{S^{(-1,0)}_{1,0;N}}\lesssim 1,\\
    \|\partial_v\calm_v''\|_{S^{(0,-\frex)}_{1,0;N}}&\le (1+\|\calr_v'\|_{S^{(0,0)}_{1,0;N}})\|\partial_v\calm_v'\|_{S^{(0,-\frex)}_{1,0;N}} + \|\partial_v\calr_v'\|_{S^{(0,0)}_{1,0;N}}\|\calm_v'\|_{S^{(0,-\frex)}_{1,0;N}}\lesssim \frac{|\nonl''(v)|}{\nonl'(v)^{2}},\\
    \|\partial_v\calm_v''\|_{S^{(-2,\frex)}_{1,0;N}}&\le (1+\|\calr_v'\|_{S^{(0,0)}_{1,0;N}})\|\partial_v\calm_v'\|_{S^{(-2,\frex)}_{1,0;N}} + \|\partial_v\calr_v'\|_{S^{(-1,\frex)}_{1,0;N}}\|\calm_v'\|_{S^{(-1,0)}_{1,0;N}} \\
    & \le (1+\|\calr_v'\|_{S^{(0,-1)}_{1,0;N}})\|\partial_v\calm_v'\|_{S^{(-2,\frex)}_{1,0;N}} + \|\partial_v\calr_{1,v}\|_{S^{(-1,\frex-1)}_{1,0;N}}\|\calm_v'\|_{S^{(-1,0)}_{1,0;N}}\lesssim |\nonl''(v)|.
\end{align*}
By Lemma \ref{lem:pseudo_comp}, the estimates on the symbols $\calr_v$, $\calm_v$ and $\partial_v\calm_v$ carry over from those on $\calr_v''$, $\calm_v''$ and $\partial_v\calm_v''$.
By the definitions of $\calm_{v}'$ and $\calr_{v}'$ we have
\begin{align*}
    \calm_{v}'(x,D_{t,x})\call_v(x,D_{t,x})=\id + \calr_{v}'(x,D_{t,x}).
\end{align*}
and further
\begin{align*}
    \calm_v(D_{t,x},x)\call_v(x,D_{t,x})&=\calm_v''(x,D_{t,x})\call_v(x,D_{t,x})=(\id-\calr_v'(x,D_{t,x}))\calm_v'(x,D_{t,x})\call_v(x,D_{t,x})\\
    &=(\id-\calr_v'(x,D_{t,x}))(\id+\calr_v'(x,D_{t,x})) =\id-\calr_v'(x,D_{t,x})^2\\
    &=\id - \calr_v''(x,D_{t,x})=\id - \calr_v(D_{t,x},x),
\end{align*}
which implies the claim.

 \medskip
 
 In the case where $\call_v$ is replaced by $\nonl'(v)p$, an analogous argument applies if we replace $\calm_v'$ by  $\td\calm_{v}':=\frac{1-\phi_0}{\nonl'(v)p}$.
\end{proof}
 \begin{lem}\label{lem_nonl}
 Let $\nonl$ fulfill Assumption \ref{ass_1}\eqref{ass_1ii} for some $m\in(1,\infty)$ and $C>0$.
 Then
 \begin{align}
    \int_{\set{\nonl'(\vf)\le \delta}} |\nonl''(v)| \dd v\le C\delta,  \quad &\tand \quad
    \int_{\set{\nonl'(\vf)> \delta}} \frac{|\nonl''(v)|}{\nonl'(v)^2} \dd v\le C\delta^{-1}, \label{ass_nonl_2}
 \end{align} 
 \end{lem}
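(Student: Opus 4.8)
The plan is to express both quantities via the one-dimensional change-of-variables (Banach indicatrix) formula applied to $g:=\nonl'$. Since $\nonl\in\CR{1}(\R)\cap\WSRloc{2}{1}(\R)$, the function $g=\nonl'\ge0$ is continuous and locally absolutely continuous, with pointwise a.e.\ derivative $\nonl''\in\LRloc{1}(\R)$. Hence, for any bounded interval $[-n,n]$ and any Borel function $h\colon\R\to[0,\infty]$ the area formula gives
\begin{align*}
 \int_{-n}^{n}h(v)\,|\nonl''(v)|\dd v=\int_{\R}\Big(\sum_{v\in[-n,n]\cap(\nonl')^{-1}(s)}h(v)\Big)\dd s,
\end{align*}
and letting $n\to\infty$, by monotone convergence the same identity holds with $\R$ in place of $[-n,n]$. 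Observe that Assumption \ref{ass_1}\eqref{ass_1ii} forces $\nonl'$ to be non-constant on every interval on which it is positive, so that $\set{\nonl'(\vf)=s}$ is finite for each $s>0$, while the level set $\set{\nonl'(\vf)=0}$ contributes only the single slice $s=0$ and is therefore negligible for the outer integral.

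For the first bound I would apply this with $h:=\1_{\set{\nonl'(\vf)\le\delta}}$. For $s\in(0,\delta]$ the preimage $(\nonl')^{-1}(s)$ is automatically contained in $\set{\nonl'(\vf)\le\delta}$, while for $s>\delta$ the corresponding sum is empty, so
\begin{align*}
 \int_{\set{\nonl'(\vf)\le\delta}}|\nonl''(v)|\dd v=\int_{0}^{\delta}\#\set{\nonl'(\vf)=s}\dd s\le\int_{0}^{\delta}C\dd s=C\delta,
\end{align*}
using the cardinality bound in Assumption \ref{ass_1}\eqref{ass_1ii}. For the second bound I would take $h(v):=\nonl'(v)^{-2}\1_{\set{\nonl'(\vf)>\delta}}$, which is well defined on $\set{\nonl'>\delta}$ and equals $s^{-2}$ on $(\nonl')^{-1}(s)$ for $s>\delta$; then
\begin{align*}
 \int_{\set{\nonl'(\vf)>\delta}}\frac{|\nonl''(v)|}{\nonl'(v)^{2}}\dd v=\int_{\delta}^{\infty}\frac{\#\set{\nonl'(\vf)=s}}{s^{2}}\dd s\le C\int_{\delta}^{\infty}s^{-2}\dd s=C\delta^{-1},
\end{align*}
again by the same cardinality bound.

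I expect no genuine obstacle: the only points requiring (classical) care are the Borel measurability of the Banach indicatrix $s\mapsto\#\set{\nonl'(\vf)=s}$, which follows from the continuity of $\nonl'$, and the validity of the weighted area formula for locally absolutely continuous functions, which one obtains from the unweighted Banach indicatrix theorem by approximating $h$ with simple functions (see e.g.\ standard references on Sobolev spaces in one dimension). It is worth noting that only the second half of \eqref{ass_nonl_1}, that is, $\#\set{\nonl'(\vf)=\delta}\le C$, enters the argument, not the measure estimate $|\set{\nonl'(\vf)\le\delta}|\le C\delta^{1/(m-1)}$; this is why the constant in \eqref{ass_nonl_2} can be taken equal to the constant $C$ of Assumption \ref{ass_1}\eqref{ass_1ii}.
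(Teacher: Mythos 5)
Your proof is correct and follows essentially the same route as the paper: both invoke the one-dimensional co-area/area formula (Banach indicatrix) for $\nonl'\in\WSRloc{1}{1}(\R)$ with the appropriate weight, reduce each integral to $\int_0^\delta\#\set{\nonl'=y}\dd y$ and $\int_\delta^\infty y^{-2}\#\set{\nonl'=y}\dd y$ respectively, and conclude from the cardinality bound in Assumption \ref{ass_1}\eqref{ass_1ii}. The paper cites Theorem~1.1 of \cite{MSZ03} for this step, and, like you, uses only the bound on $\#\set{\nonl'(\vf)=\delta}$ and not the measure estimate.
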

 \begin{proof}
  By the co-area formula for $\WSRloc{1}{1}(\R)$ functions (see e.g. Theorem 1.1 in \cite{MSZ03}) we have
    \begin{align*}
     \int_{\set{\nonl'(\vf)>\delta}}\frac{|\nonl''(v)|}{\nonl'(v)^{2}}\dd v
     & =\int\int_{\nonl'^{-1}(y)} \1_{\set{\nonl'(v)>\delta}}\frac{1}{\nonl'(v)^{2}}\dd\mathcal{H}^{0}(v)\dd y\\
     & =\int_{0}^{\infty}\frac{1}{y^{2}} \1_{\set{y>\delta}}\int_{\nonl'^{-1}(y)}\dd\mathcal{H}^{0}(v)\dd y
      =\int_{\delta}^{\infty}\frac{1}{y^{2}}\mathcal{H}^{0}(\nonl'^{-1}(y))\dd y
    \end{align*}
    and
    \begin{align*}
     \int_{\set{\nonl'(\vf)\le\delta}}|\nonl''(v)|\dd v
     & =\int\int_{\nonl'^{-1}(y)} \1_{\set{\nonl'(v)\le\delta}}\dd\mathcal{H}^{0}(v)\dd y\\
     & =\int_{0}^{\infty} \1_{\set{y\le\delta}}\int_{\nonl'^{-1}(y)}\dd\mathcal{H}^{0}(v)\dd y
     =\int_{0}^{\delta}\mathcal{H}^{0}(\nonl'^{-1}(y))\dd y.
    \end{align*}
    Here $\calh^{0}$ is the $0$-dimensional Hausdorff measure on $\R$.
    Observe that $\calh^0(\nonl'^{-1}(\cdot))\in \LRloc{1}(\R)$ by the co-area formula, and that $\mathcal{H}^{0}(\nonl'^{-1}(y))=\#\set{\nonl'(v)=y}$ is estimated by $C$ by assumption.
    Thus \eqref{ass_nonl_2} follows from $\int_0^\delta \dd y = \delta$ and $\int_\delta^\infty \frac{1}{y^2}\dd y = \frac{1}{\delta}$.
 \end{proof}

\begin{lem}\label{lem:av}
 Let the kernel $k$ and the nonlinearity $\nonl$ fulfill Assumption \ref{ass_1} for some $\frex\in(0,2)$, $m\in(1,\infty)$, and let the symbol $p(x,\xi)$ be given by \eqref{fourier-repr-ker}.
Let $f\in \LR{\infty}(\R_t \times \R^d_x \times \R_\vf)$ with $\|f\|_{\LR{\infty}_{t,x,v}}\le 1$
 be a distributional solution to
 \begin{align}\label{av_eqn}
  \diffop_v(x,\partial_t,D_x) f=\g_0+\partial_\vf \g_1  \  \ton \ \R_t \times \R^d_x \times \R_\vf.
 \end{align}
 Here, the pseudodifferential operator $\diffop_v(x,\partial_t,D_x)$ is given in terms of its symbol in \eqref{av_op},
 and $g_0$ and $g_1$ are Radon measures satisfying
 \begin{align*}
|g_0|\in
\calm_{TV}(\R_{t}\times\R_{x}^{d}\times\R_{v}),
\quad
g_1\in \LR{\infty}(\R_{v};\calm_{TV}(\R_{t}\times\R_{x}^{d})).
 \end{align*}
 Let $\overline{f}(t,x):=\int f(t,x,\vf) \dd \vf \in \LR{\infty}_t\LR{1}_x\cap\LR{1}_{t,x}$.
 Then for all $\eta\in\CRi(\R_v)$ such that $\eta \nonl' f\in \LR{1}_{t,x,v}$, $|\eta|\le 1$ and $\eta'\in \LR{1}(\R_v)$ we have $\overline{\eta f}\in S^{(0,\kappa_x)}_{m,\infty}B$, where
 \begin{align}\label{lem:av_constants}
 \begin{split}
  \kappa_x\in (0,\frac{\frex}{m}),
  \end{split}
 \end{align}
 and
 \begin{align}\label{lem:av_est1}
 \begin{split}
  \norm{\overline{\eta f}}_{S^{(0,\kappa_x)}_{m,\infty}B}^m
  &\lesssim \|g_0\|_{\calm_{TV}}+(1+\|\eta'\|_{\LR{1}_v})\|g_1\|_{\LR{\infty}_v\calm_{TV}} \\
  &\quad + \|\overline{f}\|_{\LR{\infty}_t\LR{1}_{x}\cap\LR{1}_{t,x}}^m + \|\eta f\|_{\LR{1}_{t,x,v}} + \|\eta\nonl' f\|_{\LR{1}_{t,x,v}}.
 \end{split}
 \end{align}
 \end{lem}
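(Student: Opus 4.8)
The plan is to work directly from Definition \ref{defn:spaces}: it suffices to bound, uniformly in $l,j\in\N_0$, the quantity $2^{\kappa_x j}\norm{\psi_l(D_t)\phi_j(D_x)\overline{\eta f}}_{\LR{m}_{t,x}}$ by the $m$-th root of the right-hand side of \eqref{lem:av_est1}. Since $\sigma_t=0$ there is no weight in $l$, but the bound must still be uniform in $l$, which is the reason I will need the order $-1$ (in time) estimates for the parametrix from Lemma \ref{lem:FM} in order to generate decay in $l$ for those terms that use the equation. Throughout I will use that $\psi_l\phi_j$ commutes with the operators in $x$- and $y$-form, and that, by Theorem \ref{thm:FM}, the localized operators $(\calm_v)_{lj}(D_{t,x},x)$ and $(\calr_v)_{lj}(D_{t,x},x)$ are bounded on $\LR{1}$, $\LR{m}$, $\LR{\infty}$ and on $\calm_{TV}$, with operator norm controlled by $2^{m_\tau l+m_\xi j}$ times the relevant symbol seminorm of $\calm_v$, respectively $\calr_v$.

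Fixing $(l,j)$, I will pick a threshold $\delta=\delta_j:=2^{-\lambda j}$, with $\lambda>0$ chosen at the very end, and split $\overline{\eta f}=\overline{\eta\1_{\set{\nonl'(v)\le\delta}}f}+\overline{\eta\1_{\set{\nonl'(v)>\delta}}f}$. The small-velocity term is estimated without using the equation: by $\norm{f}_{\LR{\infty}}\le1$ and the first bound in \eqref{ass_nonl_1} one gets $\norm{\overline{\eta\1_{\set{\nonl'(v)\le\delta}}f}}_{\LR{\infty}_{t,x}}\lesssim\delta^{1/(m-1)}$, while its $\LR{1}_{t,x}$-norm is at most $\norm{\eta f}_{\LR{1}_{t,x,v}}$; interpolating between $\LR{\infty}$ and $\LR{1}$ and using uniform boundedness of $\psi_l\phi_j$ on $\LR{m}$ yields a contribution $\lesssim\delta^{1/m}\norm{\eta f}_{\LR{1}_{t,x,v}}^{1/m}$, so that $2^{\kappa_x j}$ times it is bounded as soon as $\lambda\ge m\kappa_x$; since $\kappa_x<\frex/m$ such a $\lambda$ exists. (The contribution to $\overline{\eta f}$ which is neither small-velocity nor captured by the parametrix gain below will be controlled directly in terms of $\norm{\overline f}_{\LR{\infty}_t\LR{1}_x\cap\LR{1}_{t,x}}$.)

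For the large-velocity term I will use the parametrix of Lemma \ref{lem:FM}: since $\nonl'(v)\ne0$ on $\set{\nonl'(v)>\delta}$, equation \eqref{av_eqn} gives $f=\calm_v(D_{t,x},x)(g_0+\partial_\vf g_1)+\calr_v(D_{t,x},x)f$ there, and I will estimate the three resulting velocity integrals separately. In the $g_1$-integral I integrate by parts in $v$, which produces a boundary term supported on the finite set $\set{\nonl'(v)=\delta}$ (finitely many points, by the second bound in \eqref{ass_nonl_1}), controlled by $\delta^{-1}\norm{g_1}_{\LR{\infty}_v\calm_{TV}}$ through $\norm{\calm_v}_{S^{(0,-\frex)}_{1,0;N}}\lesssim\nonl'(v)^{-1}$, together with an interior term $\int_{\set{\nonl'(v)>\delta}}(\eta'\calm_v+\eta\partial_v\calm_v)g_1\dd v$, whose $\eta'$-piece is controlled by $\norm{\eta'}_{\LR{1}_v}\norm{g_1}_{\LR{\infty}_v\calm_{TV}}$ and whose $\partial_v\calm_v$-piece is controlled, after inserting $\norm{\partial_v\calm_v}_{S^{(0,-\frex)}_{1,0;N}}\lesssim|\nonl''(v)|/\nonl'(v)^2$ and the second estimate of \eqref{ass_nonl_2}, by $\norm{g_1}_{\LR{\infty}_v\calm_{TV}}$. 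The $g_0$- and $\calr_vf$-integrals I will treat by combining Theorem \ref{thm:FM} (used on $\LR{1}$, $\LR{m}$, $\LR{\infty}$ or on $\calm_{TV}$, according to the regularity of the relevant factor) with the symbol bounds $\norm{\calm_v}_{S^{(0,-\frex)}_{1,0;N}}\lesssim\nonl'(v)^{-1}$, $\norm{\calm_v}_{S^{(-1,0)}_{1,0;N}}\lesssim1$, $\norm{\calr_v}_{S^{(0,-\frex)}_{1,0;N}}\lesssim1$, $\norm{\calr_v}_{S^{(-1,0)}_{1,0;N}}\lesssim1+\nonl'(v)$ of Lemma \ref{lem:FM}. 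Concretely, for $\calr_vf$ I will use $\norm{f}_{\LR{\infty}}\le1$ and $\norm{\eta\1_{\set{\nonl'(v)>\delta}}f}_{\LR{1}_{t,x,v}}\le\delta^{-1}\norm{\eta\nonl'f}_{\LR{1}_{t,x,v}}$, a dyadic decomposition of $\set{\nonl'(v)>\delta}$ into the sets where $\nonl'(v)\sim2^k\delta$ (of Lebesgue measure $\lesssim(2^k\delta)^{1/(m-1)}$), Jensen's inequality on each piece, and interpolation between $\LR{1}$ and $\LR{\infty}$ to reach $\LR{m}$; the powers of $\delta$ and $2^k$ then cancel and only $2^{-\frex j}$, respectively $2^{-l}$, times the data survives. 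An analogous computation, now invoking the $\calm_{TV}$-statement of Theorem \ref{thm:FM}, handles the $g_0$-integral. Finally I will choose $\lambda$ so that all powers of $2^j$ and $2^l$ become non-positive; this is possible precisely when $\kappa_x<\frex/m$, after which taking the supremum over $l,j$ gives \eqref{lem:av_est1}.

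The step I expect to be the main obstacle is the lower-order term $\calr_vf$ (and, in parallel, the measure-valued term $g_0$): because $f$ is only assumed bounded and $\set{\nonl'(v)>\delta}$ is unbounded in $v$, it cannot be estimated crudely, and one has to play the two parametrix bounds for $\calr_v$ off against each other — trading the $x$-smoothing $2^{-\frex j}$ against the $t$-decay $2^{-l}$ — use the $\LR{1}$-information furnished by $\eta\nonl'f\in\LR{1}$, and exploit the precise level-set estimates for $\nonl'$ from Assumption \ref{ass_1}\eqref{ass_1ii} and from Lemma \ref{lem_nonl}. It is exactly this interplay, together with carefully keeping track of the $\nonl'(v)$-dependence of every parametrix symbol, that forces the restriction $\kappa_x<\frex/m$ and dictates the coupling $\delta_j\sim2^{-\lambda j}$.
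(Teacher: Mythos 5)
Your overall architecture (Littlewood--Paley in $(t,x)$, a threshold $\delta$ separating degenerate and nondegenerate velocities, the parametrix of Lemma~\ref{lem:FM} on the nondegenerate set, integration by parts producing a surface term on $\{\nonl'=\delta\}$ and an interior term controlled by Lemma~\ref{lem_nonl}) matches the paper's proof, and your handling of the $g_1$-term is correct. The gap is in the $\calr_vf$ and $g_0$ terms. Your dyadic decomposition $\{\nonl'(v)\sim 2^k\delta\}$, with the $L^\infty$-bound $2^{-\frex j}(2^k\delta)^{1/(m-1)}$ from the level-set estimate and the $L^1$-bound $2^{-\frex j}(2^k\delta)^{-1}\|\eta\nonl'f\|_{L^1}$, yields after the $L^1$--$L^\infty$ interpolation to $L^m$ exactly $2^{-\frex j}\|\eta\nonl'f\|_{L^1}^{1/m}$ at \emph{every} scale $k$: the cancellation of the $(2^k\delta)$-powers that you celebrate is in fact the borderline case, and the sum over $k$ diverges. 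This is the signature of obtaining only weak $L^m$, i.e.\ $L^{m,\infty}$, not $L^m$, and it is why the statement requires $\kappa_x<\frex/m$ strictly.

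The paper avoids this by a genuinely different conclusion step: it computes the Peetre $K$-functional for the pair $(\LR{1}_{t,x},\LR{\infty}_{t,x})$ using the whole nondegenerate set at once (no dyadic-in-$k$ sum), obtaining $\|\overline{f}_{l,j}\|_{\LR{m,\infty}_{t,x}}\lesssim 2^{-\frex j/m}c^{1/m}$. To upgrade from $L^{m,\infty}$ to $L^m$, the paper then runs a second (Lorentz-space) interpolation against the a priori information $\overline f\in\LR{\infty}_t\LR{1}_x\cap\LR{1}_{t,x}$, not against the LP-localized piece; this is essential because Bernstein applied directly to $\overline{f}_{l,j}$ to pass from $L^1$ to $L^m$ would cost a factor $2^{(l+dj)(1-1/m)}$ which blows up in $l$. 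Your remark that the residual contribution ``will be controlled directly in terms of $\|\overline f\|_{\LR{\infty}_t\LR{1}_x\cap\LR{1}_{t,x}}$'' gestures in the right direction, but the mechanism (how to combine the $j$-decaying $L^{m,\infty}$ bound with the global $L^\infty_tL^1_x$ bound while taking only a small Bernstein hit) is precisely what is missing, and it is the crux of the lemma. Two smaller points: (a) you invoke the $S^{(-1,0)}$-bounds on $\calm_v,\calr_v$ to ``generate decay in $l$,'' but in this lemma no $l$-decay is needed (here $\sigma_t=0$; the $S^{(0,-\frex)}$-bounds are already uniform in $l$) --- those order-$(-1,0)$ bounds are what Lemma~\ref{lem:av3} uses; and (b) the regime $l\le 1$, $j\ge 2$ needs the separate $\tau$-independent parametrix $\td\calm_v$ and the rewritten equation \eqref{eq:eqn_fK-1-2}, which contributes the extra $\|\eta f\|_{\LR{1}_{t,x,v}}$ term on the right of \eqref{lem:av_est1}; your proposal does not address this case.
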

\begin{proof}
We first assume that $f$, $g_0$, and $g_1$ are compactly supported with respect to the variable $v$.
At the end of the proof, we can remove this additional assumption by a cutoff argument.
We also assume $\eta=1$, an assumption which we can release ourselves from in the same step at the end of the proof.

We decompose $f$ into Littlewood-Paley blocks with respect to both the $t$-variable and the $x$-variable.
Let
$\{\psi_l\}_{l\in\N_0}$ be a partition of unity on $\R\setminus\set{0}$ and
$\{\phi_j\}_{j\in\N_0}$ be a partition of unity on $\R^d\setminus\set{0}$ as in Section \ref{FctSpc}. Then we define for $l,j\in\N_0$
\begin{align*}
 f_{l,j}:=\psi_l(D_t)\phi_j(D_x)f:=\calf^{-1}_{t,x}[\psi_l\phi_j\calf_{t,x}f],
\end{align*}
so that $\calf_{t,x}{f}_{l,j}(\tau,\xi,v)$ is supported on frequencies $|\tau|\sim2^l$ and $|\xi|\sim2^{j}$ for $l,j\in\N$.
The low spatial frequency part $f_{l,j}$, $j\le 2$ can be estimated in view of
 Bernstein's Lemma (cf.\@ \cite[Lemma 2.1]{BCD11}) and
 $\|\phi_j\|_{\calm^1},\|\psi_l\|_{\calm^1}\lesssim 1$ via
 \begin{align}\label{lem:av_est_Bernstein}
  \|\overline{f}_{l,j}\|_{\LR{m}_{t,x}} \lesssim \|\overline{f}\|_{\LR{m}_{t}\LR{1}_x}\lesssim \|\overline{f}\|_{\LR{\infty}_t\LR{1}_{x}\cap\LR{1}_{t,x}}.
 \end{align}

\newcounter{avl} 
\refstepcounter{avl}\label{avlS1} 
\smallskip\noindent
\textit{Step \arabic{avl}\refstepcounter{avl}\label{avlS2}:} Degenerate part. 
Let $l\in\N_0$ and $j\ge 2$.
We note that we have the estimate $\|f_{l,j}\|_{\LR{\infty}_{t,x,v}}\lesssim \|f\|_{\LR{\infty}_{t,x,v}}\le 1$ with a constant independent of $l$ and $j$, since $\|\vp_j\|_{\calm^\infty}=\|\vp_1\|_{\calm^\infty}<\infty$ and similarly for $\psi_l$. Thus, we have for any $\d>0$
\begin{align*}
 \left\|\int_{\set{\nonl'(v)\le \d}} f_{l,j}\dd v\right\|_{L_{t,x}^{\infty}}& \le \int_{\set{\nonl'(v)\le \d}} \|f_{l,j}(\cdot,v)\|_{\LR{\infty}_{t,x}}\dd v \lesssim |\set{\nonl'(v)\le \d}|\lesssim \d^{\frac{1}{m-1}}.
\end{align*}

\smallskip\noindent
\textit{Step \arabic{avl}\refstepcounter{avl}\label{avlS3}:} Nondegenerate part.
Define for all $v\in \R$ with $\nonl'(v)\ne 0$ the symbols $\calm_v$ and $\calr_v$ as in Lemma \ref{lem:FM}. 
Then there holds the representation
\begin{align*}
f &=  \calm_{v}(D_{t,x},x) [g_0 +\partial_v g_1] + \calr_{v}(D_{t,x},x)f.
\end{align*}
For $j,l\ge 2$, the operators $\calm_{vlj}(D_{t,x},x)$ and $\partial_v\calm_{vlj}(D_{t,x},x)$ extend by Lemma \ref{lem:FM} and Theorem \ref{thm:FM} to bounded operators on $\calm_{TV}$ of order $2^{-\frex j}/\nonl'(v)$ and $2^{-\frex j}\frac{|\nonl''(\vf)|}{\nonl'(\vf)^{2}}$,
and we recall that by Lemma \ref{lem_nonl} we have
\begin{align}\label{eq:av1}
    \int_{\set{\nonl'(v)\ge \d}} \frac{|\nonl''(v)|}{\nonl'(\vf)^{2}} \dd v
    \lesssim \d^{-1}.
\end{align}

Let $\psi\in \CRci(\R_t\times \R^d_x)$ with $\|\psi\|_{\LR{\infty}_{t,x}}\le 1$ and $\d>0$.
For $\eps>0$ we consider
\begin{align*}
    \eta^{\eps}(v)=(\vp^{\eps}\ast \1_{\set{\nonl'(v)\ge\d}})(v),
\end{align*}
where $\vp^{\eps}$ is a smooth Dirac sequence.
We then have
\begin{align*}\int_{\R_{v}}\eta^{\eps}\int_{\R_{t}\times\R_{x}^{d}}\psi f_{l,j}\dd t\dd x\dd v & =\int_{\R_{v}}\eta^{\eps}\int_{\R_{t}\times\R_{x}^{d}}\psi\big(\calm_{vlj}(D_{t,x},x)g_0-\partial_{v}\calm_{vlj}(D_{t,x},x)g_1\big)\dd t\dd x\dd v\\
 & \quad+\int_{\R_{v}}\partial_{v}\eta^{\eps}\int_{\R_{t}\times\R_{x}^{d}}\psi\calm_{vlj}(D_{t,x},x)g_1\dd t\dd x\\
 & \quad+\int_{\R_{v}}\eta^{\eps}\int_{\R_{t}\times\R_{x}^{d}}\psi\calr_{vlj}(D_{t,x},x)f\dd v.
\end{align*}
We can pass to the limit $\eps\to 0$ in all terms on the right hand side except for the one containing $\partial_{v}\eta^{\eps}$.
For this term we use the co-area formula in \cite[Theorem 1.1]{MCS01}
\begin{align*}
\partial_{v}\eta^{\eps}(v) & =\int_{\R_{w}}\vp^{\eps}(v-w)\partial_{w} \1_{\set{\nonl'(v)\ge\d}}(w)\dd w =\int_{\set{\nonl'(v)=\d}}\vp^{\eps}(v-w)\dd\mathcal{H}^{0}(w),
\end{align*}
where $\mathcal{H}^0$ is the $0$-dimensional Hausdorff measure on $\R$.
We hence may write
\begin{align*}
\int_{\R_{v}}\partial_{v}\eta^{\eps}&\int_{\R_{t}\times\R_{x}^{d}}\psi\calm_{vlj}(D_{t,x},x)g_1\dd t\dd x \\
&=\int_{\R_{v}}\int_{\set{\nonl'(v)=\d}}\vp^{\eps}(v-w)\dd\mathcal{H}^{0}(w)\int_{\R_{t}\times\R_{x}^{d}}\psi\calm_{vlj}(D_{t,x},x)g_1\dd t\dd x\\
 & =\int_{\R_{v}}\int_{\set{\nonl'(v)=\d}}\vp^{\eps}(v-w)\int_{\R_{t}\times\R_{x}^{d}}\psi\calm_{vlj}(D_{t,x},x)g_1\dd t\dd x\dd\mathcal{H}^{0}(w),
\end{align*}
and thus estimate with $\|\psi\|_{\LR{\infty}_{t,x}}\le 1$ and $\|\calm_{vlj}\|_{\calm_{TV}\to\calm_{TV}}\lesssim \frac{2^{-\frex j}}{\nonl'(v)}$
\begin{align*}
\left|\int_{\R_{v}}\partial_{v}\eta^{\eps}\int_{\R_{t}\times\R_{x}^{d}}\psi\calm_{vlj}(D_{t,x},x)g_1\dd t\dd x\right|
    & \lesssim\int_{\R_{v}}\int_{\set{\nonl'(v)=\d}}\vp^{\eps}(v-w)\frac{2^{-\frex j}}{\nonl'(v)}\|g_1(v)\|_{\mathcal{M}_{TV}}\dd\mathcal{H}^{0}(w)\\
 & \le 2^{-\frex j}\|g_1\|_{L_{v}^{\infty}\mathcal{M}_{TV}}\int_{\set{\nonl'(v)=\d}}(\vp^{\eps}\ast\frac{1}{\nonl'(v)})(w)\dd\mathcal{H}^{0}(w).
\end{align*}
Therefore we deduce
\begin{align*}
\limsup_{\eps\to0}\int_{\R_{v}}\partial_{v}\eta^{\eps}&\int_{\R_{t}\times\R_{x}^{d}}\psi\calm_{vlj}(D_{t,x},x)g_1\dd t\dd x \\
& \lesssim 2^{-\frex j}\|g_1\|_{L_{v}^{\infty}\mathcal{M}_{TV}}\int_{\set{\nonl'(v)=\d}}\limsup_{\eps\to0}(\vp^{\eps}\ast\frac{1}{|\nonl'(v)|})(w)\dd\mathcal{H}^{0}(w)\\
 & \le 2^{-\frex j}\|g_1\|_{L_{v}^{\infty}\mathcal{M}_{TV}}\int_{\set{\nonl'(v)=\d}}\frac{1}{|\nonl'(w)|}\dd\mathcal{H}^{0}(w)\\
 & \le 2^{-\frex j}\delta^{-1}\|g_1\|_{L_{v}^{\infty}\mathcal{M}_{TV}}\mathcal{H}^{0}(\set{\nonl'(v)=\d})
 \lesssim 2^{-\frex j}\delta^{-1}\|g_1\|_{L_{v}^{\infty}\mathcal{M}_{TV}}.
\end{align*}

Since the operator $\calr_{vlj}(x,D_{t,x})$ extends by Lemma \ref{lem:FM} and Theorem \ref{thm:FM} to a constant multiplier on $\LR{1}_{t,x}$ of order $2^{-\frex j}$, we conclude
\begin{align*}
 &\|\int_{\set{\nonl'(v)\ge \d}} f_{l,j} \dd v\|_{L_{t,x}^{1}} \\
 &\quad \le \int_{\set{\nonl'(v)\ge \d}} \left(\|\calm_{vlj}(D_{t,x},x)g_0\|_{\LR{1}_{t,x}} + \|\partial_v\calm_{vlj}(x,D_{t,x})g_1\|_{\LR{1}_{t,x}}\right) \dd v \\
 &\qquad + 2^{-\frex j}\delta^{-1}\|g_1\|_{\LR{\infty}_v\calm_{TV}} +  \delta^{-1}\|\calr_{vlj} \nonl' f\|_{\LR{1}_{t,x,v}} \\
 &\quad \lesssim  2^{-\frex j}\d^{-1}\left(\|g_0\|_{\calm_{TV}} + \d\int_{\set{\nonl'(v)\ge \d}} \frac{|\nonl''(v)|}{\nonl'(v)^2} \|g_1(\cdot,v)\|_{\calm_{t,x}} \dd v + \|g_1\|_{\LR{\infty}_v\calm_{TV}}  + \|\nonl' f\|_{\LR{1}_{t,x,v}}\right) \\
 &\quad \lesssim 2^{-\frex j}\d^{-1}(\|g_0\|_{\calm_{TV}} + \|g_1\|_{\LR{\infty}_v\calm_{TV}} + \|\nonl' f\|_{\LR{1}_{t,x,v}}).
\end{align*}
In the case $l\le 1$ and $j\ge 2$, it will be more advantageous to leverage upon $\td\calm_v$ and $\td\calr_v$ from Lemma \ref{lem:FM} to obtain the slightly different decomposition
\begin{align}\label{eq:eqn_fK-1-2}
f =  \td\calm_{v}(D_{x},x) [g_0 +\partial_v g_1 - \calf_t^{-1}i\tau \calf_t f] + \td\calr_{v}(D_{x},x)f.
\end{align}
We use that $\psi_l(D_t)$ commutes with all pseudo-differential operators considered here (since their dependence is only in space), so that in view of $\psi_l=\psi_l(\psi_0+\psi_1+\psi_2)$ we have
\begin{align*}
   \|\td\calm_{vlj}(D_x,x)\calf_t^{-1}i\tau\calf_tf\|_{\LR{1}_{t,x}}
   &\lesssim 2^{-\frex j} \nonl'(v)^{-1}\|\calf_t^{-1}i\tau(\psi_0+\psi_1+\psi_2)\calf_tf\|_{\LR{1}_{t,x}}\\
   &\lesssim 2^{-\frex j} \nonl'(v)^{-1}\|f\|_{\LR{1}_{t,x}}.
\end{align*}
Hence, by the same arguments as above, we arrive also in the case $l\le 1$ at
\begin{align*}
 &\|\int_{\set{\nonl'(v)\ge \d}} f_{l,j} \dd v\|_{L_{t,x}^{1}} \\
 &\le \int_{\set{\nonl'(v)\ge \d}} \Big(\|\td\calm_{vlj}(D_x,x)g_0\|_{\LR{1}_{t,x}} + \|\partial_v\td\calm_{vlj}(D_x,x)g_1\|_{\LR{1}_{t,x}} + \|\td\calm_{vlj}(D_x,x)\calf_t^{-1}i\tau\calf_tf\|_{\LR{1}_{t,x}}\Big) \dd v  \\
 &\qquad + 2^{-\frex j}\delta^{-1}\|g_1\|_{\LR{\infty}_v\calm_{TV}} + \delta^{-1} \|\td\calr_v(D_x,x)\nonl' f\|_{\LR{1}_{t,x,v}} \\
 & \lesssim  2^{-\frex j}\d^{-1}\left(\|g_0\|_{\calm_{TV}} + \d\int_{\set{\nonl'(v)\ge \d}} \frac{|\nonl''(v)|}{\nonl'(v)^2} \|g_1(\cdot,v)\|_{\calm_{t,x}} \dd v +\|f\|_{\LR{1}_{t,x}} + \|g_1\|_{\LR{\infty}_v\calm_{TV}}  + \|\nonl' f\|_{\LR{1}_{t,x,v}}\right) \\
 & \lesssim 2^{-\frex j}\d^{-1}(\|g_0\|_{\calm_{TV}} + \|g_1\|_{\LR{\infty}_v\calm_{TV}} + \|f\|_{\LR{1}_{t,x,v}} +\|\nonl' f\|_{\LR{1}_{t,x,v}}).
\end{align*}

\smallskip\noindent
\textit{Step \arabic{avl}\refstepcounter{avl}\label{avlS4}:} Conclusion. 
Let $\kappa_x\in (0,\frac{\frex}{m})$. We aim to conclude by real interpolation. We set, for $\tauz>0$,
\begin{align*}
K(\tauz,\overline{f}_{l,j}):=\inf\{ & \|\overline{f}_{l,j}^{1}\|_{\LR{1}_{t,x}}+\tauz\|\overline{f}_{l,j}^{0}\|_{\LR{\infty}_{t,x}}:\overline{f}_{l,j}^{0}\in \LR{\infty}_{t,x}, \overline{f}_{l,j}^{1}\in \LR{1}_{t,x},\ \overline{f}_{l,j}=\overline{f}_{l,j}^{0}+\overline{f}_{l,j}^{1}\}.
\end{align*}
By the above estimates we obtain with $c:=\|g_0\|_{\calm_{TV}}+\|g_1\|_{\LR{\infty}_v\calm_{TV}} +\|f\|_{\LR{1}_{t,x,v}}+ \|\nonl' f\|_{\LR{1}_{t,x,v}}$ (by relabelling $\d\mapsto \d^{m-1}$)
\begin{align*}
K(\tauz,\overline{f}_{l,j}) & \lesssim 2^{-\frex j}\d^{1-m}c +\tauz \d.
\end{align*}
We now equilibrate the first and the second term on the right hand side, that is, we choose $\delta>0$ such that
\begin{align*}
 2^{-\frex j}\d^{1-m}c=\tauz\d,
 \quad \text{that is} \quad
 \d
 =2^{-\frac{\frex j}{m}}c^{\frac{1}{m}} \tauz^{-\frac{1}{m}}.
\end{align*}
Hence, with 
\[
\t:=1-\frac{1}{m}
\]
we obtain
\begin{align*}
\tauz^{-\t}K(\tauz,\overline{f}_{j})& \lesssim  2^{-\frac{\frex j}{m}}c^{\frac{1}{m}}.
\end{align*}
Observe that $1-\t + \frac{\t}{\infty}=1-\t$, so that $(\LR{1}_{t,x},\LR{\infty}_{t,x})_{\t,\infty}=\LR{m,\infty}_{t,x}$, since $m=\frac{1}{1-\t}$. Hence, we may take the supremum over $\tauz>0$ to obtain
\begin{align}\label{av_est_lj}
\|\overline{f}_{l,j}\|_{\LR{m,\infty}_{t,x}}  \lesssim 2^{-\frac{\frex j}{m}}c^{\frac{1}{m}}.
\end{align}
Let $\sigma_x\in (\kappa_x,\frac{\frex}{m})$. Choose $\vartheta\in(0,1)$ such that $\frac{\frex \vartheta}{m}> \sigma_x$ and $\frac{1}{q}:=1-\vartheta + \frac{\vartheta}{m}$ is such that
\begin{align*}
 d(\frac{1}{q} - \frac{1}{m})=d (1-\vartheta)(1-\frac{1}{m})< \frac{\frex \vartheta}{m} - \sigma_x.
\end{align*}
Since $q<m$ and Lorentz spaces are increasing in their second parameter, we have $\LR{m,q}_t\subset \LR{m}_t$. Using additionally the interpolation results $(\LR{1}_{t,x},\LR{m,\infty}_{t,x})_{\vartheta,q}=\LR{q}_{t,x}$, $(\LR{\infty}_{t}\LR{1}_{x},\LR{m,\infty}_{t,x})_{\vartheta,q}=\LR{\frac{m}{\vartheta},q}_t\LR{q}_x$ as well as $(\LR{\frac{m}{\vartheta},q}_{t}\LR{q}_x,\LR{q}_{t,x})_{\frac{1}{m},q}=\LR{m,q}_t\LR{q}_x$, together with \eqref{av_est_lj}, we obtain in view of Bernstein's Lemma and $j\ge 0$
\begin{align}\label{av_est_lj_2}
\begin{split}
\|\overline{f}_{l,j}\|_{\LR{m}_{t,x}}
&\lesssim 2^{d(\frac{1}{q}-\frac{1}{m})j}\|\overline{f}_{l,j}\|_{\LR{m}_t\LR{q}_{x}}
\lesssim 2^{d(\frac{1}{q}-\frac{1}{m})j}\|\overline{f}_{l,j}\|_{\LR{m,q}_t\LR{q}_{x}} \\
&\lesssim 2^{d(\frac{1}{q}-\frac{1}{m})j}(\|\overline{f}_{l,j}\|_{\LR{\frac{m}{\vartheta},q}_t\LR{q}_{x}} + \|\overline{f}_{j}\|_{\LR{q}_{t,x}}) \\
&\lesssim 2^{(\frac{\frex \vartheta}{m} - \sigma_x)j}\|\overline{f}_{l,j}\|_{\LR{\infty}_{t}\LR{1}_{x}\cap \LR{1}_{t,x}}^{1-\vartheta}\|\overline{f}_{l,j}\|_{\LR{m,\infty}_{t,x}}^\vartheta \\
& \lesssim \|\overline{f}\|_{\LR{\infty}_{t}\LR{1}_{x}\cap \LR{1}_{t,x}}^{1-\vartheta}2^{- \sigma_x j}c^{\frac{\vartheta}{m}}  \\
&\le 2^{-\sigma_x j}(c^\frac{1}{m}  + \|\overline{f}\|_{\LR{\infty}_{t}\LR{1}_{x}\cap \LR{1}_{t,x}}).
\end{split}
\end{align}
Multiplying by $2^{\sigma_x j}$ and taking the supremum over $j\in \N_0$ yields (in view of \eqref{lem:av_est_Bernstein} for $j\in\set{0,1}$)
\begin{align*}
 &\norm{\overline{f}}_{S^{(0,\sigma_x)}_{m,\infty}B}\lesssim c^{\frac{1}{m}} +  \|\overline{f}\|_{\LR{\infty}_{t}\LR{1}_x\cap \LR{1}_{t,x}}.
\end{align*}
This is \eqref{lem:av_est1}.

\textit{Step \arabic{avl}\refstepcounter{avl}:} 
It remains to consider the case when $f$, $g_0$, and $g_1$ are not localized in $v$ and $\eta\ne 1$. We observe that for a smooth cut-off function $\psi\in \CRci(\R)$, the function $(t,x,v)\to f(t,x,v)\psi(v)=:f^\psi(t,x,v)$ is a solution to
\begin{align*}
  \op[\diffop_\vf] f^\psi(t,x,\vf)=\g_0^\psi(t,x,\vf)+g_1^{\psi'}(t,x,v)+\partial_\vf g_1^\psi(t,x,\vf) \ \ton \R_t \times \R^d_x \times \R_\vf,
 \end{align*}
 where $\g^\psi$, $q^{\psi'}$ and $\q^\psi$ are defined analogously.
 Hence, estimate \eqref{av_est_lj_2} reads in this case
 \begin{align*}
\|\overline{f^\psi_{lj}}\|_{\LR{m}_{t,x}} \lesssim 2^{-\sigma_x j}((\|(g_0^\psi+g_1^{\psi'})\|_{\calm_{TV}}+\|g_1^\psi\|_{\LR{\infty}_v\calm_{t,x}}+\|f^\psi\|_{\LR{1}_{t,x,v}}+\|\nonl' f^\psi\|_{\LR{1}_{t,x,v}})^{\frac{1}{m}} + \|\overline{f}\|_{\LR{\infty}_{t}\LR{1}_{x}\cap \LR{1}_{t,x}}).\end{align*}
  For $n\in\N$ and a smooth cut-off function $\psi\in \CRci(\R)$ with $\psi=1$ on $B_1(0)$, $\supp\psi\subset B_2(0)$ and $0\le\psi\le 1$, we define $\psi_n$ via $\psi_n(v):=\psi(v/r_n)$. Hence $\psi'_n$ is supported on $r_n\le |v|\le 2r_n$ and takes values in $[0,1/r_n]$, so that we may estimate for $\eta\in \CRi(\R_v)$ with $|\eta|\le 1$
 \begin{align*}
  \|g_1^{(\eta\psi_n)'}\|_{\calm_{TV}}&\le \int_{\R_t\times \R^d_x\times \R_v} |\psi'_n(v)\eta(v)| |g_1|\dd v \dd x \dd t + \int_{\R_t\times\R^d_x\times \R_v} |\psi_n(v)\eta'(v)| |g_1|\dd v \dd x \dd t \\
  &\le \int_{\R_t\times \R^d_x\times \R_v} \chi_{\set{r_n\le |v|\le 2 r_n}}|\psi'_n(v)| |g_1|\dd v \dd x \dd t + \|g_1\|_{\LR{\infty}_v\calm_{t,x}}\|\eta'\|_{\LR{1}_v}\\
  &\lesssim \|g_1\|_{\LR{\infty}_v\calm_{t,x}}\int_{\R_v} \chi_{\set{r_n\le |v|\le 2 r_n}} |\psi'_n(v)| \dd v + \|g_1\|_{\LR{\infty}_v\calm_{t,x}}\|\eta'\|_{\LR{1}_v}\\
  &\le (1+\|\eta'\|_{\LR{1}_v})\|g_1\|_{\LR{\infty}_v\calm_{t,x}}.
 \end{align*}
 Thus, taking the limit $n\to\infty$ and using Fatou's lemma, we obtain \eqref{av_est_lj_2} also for general $f$, $g_0$, and $g_1$ and we may conclude as before.
\end{proof}

\begin{lem}\label{lem:av3}
 Assume $m\in \vpp{1,\infty}$
 and let $\nonl$, $f$, $g_0$, $g_1$, and $\overline{f}$ be as in Lemma \ref{lem:av}.
 Then for all $\eta\in\CRi(\R_v)$ such that $\eta \nonl' f\in \LR{1}_{t,x,v}$, $|\eta|\le 1$ and $\eta'\in \LR{1}(\R_v)$ we have $\overline{\eta f}\in S^{(1,0)}_{1,\infty}B$
 and
 \begin{align}\label{lem:av3_est}
  &\norm{\overline{\eta f}}_{S^{(1,0)}_{1,\infty}B}\lesssim \|g_0\|_{\calm_{TV}} +(1+\|\eta'\|_{\LR{1}_v})\|g_1\|_{\LR{\infty}_v\calm_{TV}} + \|f\|_{\LR{1}_{t,x,v}} + \|\eta \nonl' \f\|_{\LR{1}_{t,x,v}}.
 \end{align}
 \end{lem}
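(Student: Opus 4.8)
The plan is to follow the architecture of the proof of Lemma~\ref{lem:av}, but to invoke the parametrix of Lemma~\ref{lem:FM} through its $S^{(-1,0)}_{1,0}$-type bounds -- which, unlike the $S^{(0,-\frex)}_{1,0}$-type bounds, do \emph{not} degenerate as $\nonl'(v)\to 0$ -- so that no real interpolation is needed. As in the proof of Lemma~\ref{lem:av}, I would first reduce (by a cutoff in $v$, Fatou's lemma, and the estimate $\|g_1^{(\eta\psi_n)'}\|_{\calm_{TV}}\lesssim(1+\|\eta'\|_{\LR{1}_v})\|g_1\|_{\LR{\infty}_v\calm_{TV}}$) to the case that $f$, $g_0$, $g_1$ are compactly supported in $v$. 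Writing $f_{lj}:=\psi_l(D_t)\phi_j(D_x)f$, it then suffices by Definition~\ref{defn:spaces} to bound $2^l\|\overline{\eta f}_{lj}\|_{\LR{1}_{t,x}}$ uniformly in $l,j\in\N$. If $l\le 1$ then $2^l\lesssim 1$ and Bernstein's lemma together with $\|\psi_l\|_{\calm^1},\|\phi_j\|_{\calm^1}\lesssim 1$, $|\eta|\le 1$ give $2^l\|\overline{\eta f}_{lj}\|_{\LR{1}_{t,x}}\lesssim\|\overline{\eta f}\|_{\LR{1}_{t,x}}\le\|f\|_{\LR{1}_{t,x,v}}$. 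If $l\ge 2$ and $j\le 1$, I would integrate \eqref{av_eqn} against $\eta(v)\dd v$ and integrate by parts in $v$ to obtain $\partial_t\overline{\eta f}=-L_x\overline{\eta\nonl' f}+\overline{\eta g_0}-\overline{\eta' g_1}$; then, using the standard Littlewood--Paley fact $2^l\|h_{lj}\|_{\LR{1}_{t,x}}\lesssim\|(\partial_t h)_{lj}\|_{\LR{1}_{t,x}}$ for $l\ge 1$, the boundedness of $\phi_j(D_x)L_x$ on $\LR{1}_x$ with norm $\lesssim 2^{\frex j}\lesssim 1$ for $j\le 1$ (Theorem~\ref{thm:FM}), and that $\psi_l(D_t)\phi_j(D_x)$ maps $\calm_{TV}$ boundedly into $\LR{1}_{t,x}$, the three terms are controlled by $\|\eta\nonl' f\|_{\LR{1}_{t,x,v}}$, $\|g_0\|_{\calm_{TV}}$, and $\|\eta'\|_{\LR{1}_v}\|g_1\|_{\LR{\infty}_v\calm_{TV}}$.

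The main regime is $l,j\ge 2$. Since $|\{\nonl'=0\}|=0$ by \eqref{ass_nonl_1} (let $\delta\to 0$), Lemma~\ref{lem:FM} applies for a.e.\ $v$ and yields, after applying $\psi_l(D_t)\phi_j(D_x)$,
\[
f_{lj}=\calm_{v,lj}(D_{t,x},x)\bigl[g_0+\partial_v g_1\bigr]+\calr_{v,lj}(D_{t,x},x)f .
\]
Multiplying by $2^l\eta(v)$, integrating in $v$, and integrating by parts in the $\partial_v g_1$-term -- here, in contrast with the proof of Lemma~\ref{lem:av}, \emph{no superlevel set is introduced}, so no coarea boundary contribution arises; one only needs that $v\mapsto\calm_v$ is $W^{1,1}_{\mathrm{loc}}$ and that $g_1$ has compact $v$-support -- one is reduced to estimating in $\LR{1}_{t,x}$ the four quantities $\int\eta(v)\,2^l\calm_{v,lj}(D_{t,x},x)g_0(\cdot,v)\dd v$, $\int\eta'(v)\,2^l\calm_{v,lj}(D_{t,x},x)g_1(\cdot,v)\dd v$, $\int\eta(v)\,2^l\calr_{v,lj}(D_{t,x},x)f(\cdot,v)\dd v$, and $\int\eta(v)\,2^l(\partial_v\calm_v)_{lj}(D_{t,x},x)g_1(\cdot,v)\dd v$. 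For the first three I would use Theorem~\ref{thm:FM} together with $\|\calm_v\|_{S^{(-1,0)}_{1,0;N}}\lesssim 1$ and $\|\calr_v\|_{S^{(-1,0)}_{1,0;N}}\lesssim 1+\nonl'(v)$, which give $\|2^l\calm_{v,lj}(D_{t,x},x)\|_{\calm_{TV}\to\LR{1}}\lesssim 1$ and $\|2^l\calr_{v,lj}(D_{t,x},x)\|_{\LR{1}\to\LR{1}}\lesssim 1+\nonl'(v)$, both \emph{uniform} as $\nonl'(v)\to 0$. Hence (using $|\eta|\le 1$ and $\int|\eta(v)|(1+\nonl'(v))\|f(\cdot,v)\|_{\LR{1}_{t,x}}\dd v\le\|f\|_{\LR{1}_{t,x,v}}+\|\eta\nonl' f\|_{\LR{1}_{t,x,v}}$) these three terms are bounded by $\|g_0\|_{\calm_{TV}}$, by $\|\eta'\|_{\LR{1}_v}\|g_1\|_{\LR{\infty}_v\calm_{TV}}$, and by $\|f\|_{\LR{1}_{t,x,v}}+\|\eta\nonl' f\|_{\LR{1}_{t,x,v}}$, respectively.

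The fourth term is the main obstacle, and its treatment is precisely where \emph{both} of the $\partial_v\calm_v$-bounds from Lemma~\ref{lem:FM} must be used simultaneously. By Theorem~\ref{thm:FM}, the bound $\|\partial_v\calm_v\|_{S^{(-2,\frex)}_{1,0;N}}\lesssim|\nonl''(v)|$ yields $\|2^l(\partial_v\calm_v)_{lj}(D_{t,x},x)\|_{\calm_{TV}\to\LR{1}}\lesssim 2^{-l+\frex j}|\nonl''(v)|$, while $\|\partial_v\calm_v\|_{S^{(0,-\frex)}_{1,0;N}}\lesssim|\nonl''(v)|/\nonl'(v)^2$ yields the bound $\lesssim 2^{l-\frex j}|\nonl''(v)|/\nonl'(v)^2$. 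Taking the minimum of the two and setting $\delta:=2^{l-\frex j}$ (so that the minimum equals $\delta^{-1}|\nonl''(v)|$ on $\{\nonl'(v)\le\delta\}$ and $\delta\,|\nonl''(v)|/\nonl'(v)^2$ on $\{\nonl'(v)>\delta\}$), the fourth term is
\[
\lesssim\|g_1\|_{\LR{\infty}_v\calm_{TV}}\Bigl(\delta^{-1}\int_{\{\nonl'(v)\le\delta\}}|\nonl''(v)|\dd v+\delta\int_{\{\nonl'(v)>\delta\}}\frac{|\nonl''(v)|}{\nonl'(v)^2}\dd v\Bigr)\lesssim\|g_1\|_{\LR{\infty}_v\calm_{TV}}
\]
by \eqref{ass_nonl_2} (Lemma~\ref{lem_nonl}), \emph{uniformly in} $l,j$ -- the choice $\delta=2^{l-\frex j}$ is exactly the one that balances the two estimates, so no equilibration or Lorentz-space interpolation is needed. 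Collecting all the estimates and taking the supremum over $l,j\in\N$ gives \eqref{lem:av3_est} under the compact-$v$-support reduction, and the general case (and general $\eta$) follows by the cutoff argument at the end of the proof of Lemma~\ref{lem:av}.
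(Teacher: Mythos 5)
Your argument matches the paper's proof of Lemma~\ref{lem:av3} in all essentials: the same Littlewood--Paley split into $l\le 1$, $l\ge 2,\ j\le 1$, and $l,j\ge 2$ regimes, trivial multiplier bounds for the first, inversion of $\partial_t$ for the second, and the parametrix of Lemma~\ref{lem:FM} with the $S^{(-1,0)}$-type bounds and equilibration of the two $\partial_v\calm_v$-bounds at $\delta=2^{l-\frex j}$ for the third. The only cosmetic difference is that you carry the cutoff $\eta$ explicitly throughout, whereas the paper first proves the estimate for $\eta=1$ and then recovers the general case by invoking the $v$-cutoff argument from the final step of the proof of Lemma~\ref{lem:av}.
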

\begin{proof}
 We first assume that $f$ is compactly supported in $v$ and $\eta=1$.
 For $l,j\in \N_0$, we introduce $f_{l,j}:=\calf^{-1}_{t,x}\psi_l(\tau)\phi_j(\xi)\calf_{t,x} f$.
 The low time-frequency part $f_{l,j}$ with $l\le 1$ can be estimated in view of
 $\|\psi_l\|_{\calm^1}\lesssim 1$ and $\|\phi_j\|_{\calm^1}=\|\phi_1\|_{\calm^1}\lesssim 1$ via
 \begin{align}\label{lem:av3_est5}
  \|\overline{f}_{l,j}\|_{\LR{1}_{t,x}} \lesssim \|\overline{f}\|_{\LR{1}_{t,x}} = \|f\|_{\LR{1}_{t,x,v}}.
 \end{align}
 Next, we estimate $f_{l,j}$ for $l,j\ge 2$.
 We leverage upon the fact that $f$ solves \eqref{av_eqn} to write
 \begin{align*}
  \int f_{l,j}\dd v
 &  =\int \calm_{vlj}(D_{t,x},x)g_0\dd v 
 -\int  \partial_v\calm_{vlj}(D_{t,x},x) g_1 \dd v + \int \calr_{vlj}(D_{t,x},x)f\dd v,
 \end{align*}
 where $\calm_v(x,\tau,\xi)$ and $\calr_v(x,\tau,\xi)$ are defined as in \eqref{lem:FMe001}.
By Lemma \ref{lem:FM} and Theorem \ref{thm:FM} the operator $\calr_{vlj}(D_{t,x},x)$ extends to a bounded operator on $\LR{1}_{t,x}$ with a bound estimated by $2^{-l}(1+\nonl'(v))$, while $\calm_{vlj}$ extends to a bounded operator on $\calm_{TV}$ with a bound estimated by $2^{-l}$, and $\partial_v\calm_{vlj}$ extends to a bounded operator on $\calm_{TV}$ with a bound estimated by $\min\set{2^{-\frex j}\frac{|\nonl''(\vf)|}{\nonl'(\vf)^2},2^{-2l}2^{\frex  j}|\nonl''(\vf)|}$.

Since we have
by Lemma \ref{lem_nonl}
\begin{align*}
 \int_{\set{\nonl'(v)\ge \d}} \frac{|\nonl''(\vf)|}{\nonl'(\vf)^2}\dd v\lesssim\d^{-1},
 \quad
 \int_{\set{\nonl'(v)< \d}} |\nonl''(\vf)|\dd v\lesssim \d,
\end{align*}
we obtain for any $\d>0$
\begin{align*}
 \norm{\int f_{l,j} \dd v}_{\LR{1}_{t,x}} & \lesssim 2^{-l}\|g_0\|_{\calm_{TV}} + (2^{-\frex j}\d^{-1} + 2^{-2l}2^{\frex j}\d)\|g_1\|_{\LR{\infty}_v\calm_{TV}} + 2^{-l} (\|f\|_{\LR{1}_{t,x,v}} + \|\nonl' f\|_{\LR{1}_{t,x,v}}).
\end{align*}
We now equilibrate the right-hand side by choosing $\d:= 2^{l}2^{-\frex j}$, so that
\begin{align}\label{lem:av3_est4}
 \norm{\int f_{l,j} \dd v}_{\LR{1}_{t,x}} & \lesssim 2^{-l}(\|g_0\|_{\calm_{TV}} +\|g_1\|_{\LR{\infty}_v\calm_{TV}} + \|f\|_{\LR{1}_{t,x,v}} + \|\nonl' f\|_{\LR{1}_{t,x,v}}).
\end{align}
 It remains to estimate the contribution of $f_{l,j}$ for $j\le 1$. Observe that $f_{l,j}$ solves the equation
  \begin{align*}
   f_{l,j}= & -\nonl'(\vf)\phi_j(D_x)p_R(D_x,x)\calf^{-1}_{t}\frac{\psi_l(\tau)}{i\tau}\calf_{t}\f + \calf^{-1}_{t}\frac{1}{i\tau}\calf_{t}  (g_0)_{l,j} + \calf^{-1}_{t}\frac{1}{i\tau}\calf_{t} \partial_\vf (g_1)_{l,j}.
  \end{align*}
  Integrating in $v$, we obtain
  \begin{align*}
   \overline{f}_{l,j}=- \int \nonl'(\vf)\phi_j(D_x)p_R(D_x,x)\calf^{-1}_{t}\frac{\psi_l(\tau)}{i\tau}\calf_{t}\f \dd v + \calf^{-1}_{t}\frac{1}{i\tau}\phi_0(\xi)\calf_{t} \int (g_0)_{l,j} \dd v.
  \end{align*}
  Since $p(x,\xi)$ acts as a constant multiplier of order $1$ on the support of $\phi_0$ and $\phi_1$, and since $\tau^{-1}$ acts as a constant multiplier of order $2^{-l}$ on the support of $\psi_l$, it follows for $j\le 1$
  \begin{align}\label{lem:av3_est6}
   \norm{\overline{f}_{l,j}}_{\LR{1}_{t,x}}\lesssim 2^{-l}(\|\nonl' \f\|_{\LR{1}_{t,x,v}} + \|g_0\|_{\calm_{TV}}).
  \end{align}
 Collecting \eqref{lem:av3_est5}, \eqref{lem:av3_est4} and \eqref{lem:av3_est6}, multiplying by $2^{l}$ and taking the supremum over $l\in\Z$, we arrive at \eqref{lem:av3_est} for $\eta=1$ and for such $f$ which are localized in $v$. 
 Applying the arguments from Step \ref{avlS4} in the proof of Lemma \ref{lem:av}, we obtain the full assertion.
\end{proof}

%

\section{\label{app:kin_solutions}Generalized kinetic solutions}

In \cite{AAO20}, the kinetic theory for homogeneous, nonlocal parabolic-hyperbolic PDEs was studied. Here, we generalize this theory to spatially inhomogeneous kernels $k$, that is to kernels which depend on both $x$ and $y$ instead of just $y$. 

In the following, we will first introduce the notion of entropy and kinetic solutions to PDEs of the type
\begin{align}\label{eq:par-hyp}
\begin{pdeq}
\partial_{t}u+\div F(u)+L \nonl(u) & =S &&\text{on }(0,T)\times\R_{x}^{d}\\
u(0) & =u_{0} && \text{on }\R_{x}^{d},
\end{pdeq}
\end{align}
where $L$ is of the pure jump type \eqref{def_op} and
\begin{equation}\begin{split}\label{eqn:sec5_as}
&u_{0}  \in L^{1}(\R_{x}^{d}),\,S\in L^{1}([0,T]\times\R_{x}^{d}),\,T\ge0, \\
&\nonl  \in \WSRloc{1}{\infty}(\R) \text{ is nondecreasing}, \\
&F  \in \WSRloc{1}{\infty}(\R,\R^d),  \\
&k:\R^d\times(\R^d\setminus\set{0})\to(0,\infty) \text{ measurable with } k(x,y)=k(x+y,-y) \text{ for all } x,y\in\R^d, \\
&\sup_{x\in\R^d}\int_{\R^d} ((\1_{[1,2)}(a)|y|^2 + \1_{(0,1)}(a)|y|) \wedge 1)k(x,y)\dd y<\infty. 
\end{split}\end{equation}
Motivated from this, we introduce a broad class of so-called generalized kinetic solutions. This constitutes an extension to parabolic-hyperbolic, nonlocal PDEs of related concepts for  scalar conservation laws by \cite[Chapter 4]{Perthame2002}. Despite the breadth of the resulting class of solutions, we demonstrate in this section that it offers sufficient structure to be amenable to the regularity results derived in this work. 

We start by recalling the motivation of the concept of entropy solutions. Let $s\in C^2(\R)$ be convex and $\eta:\R\to\R^d$, $\beta:\R\to\R$ such that $\eta'=s'F'$ and $\beta'=s'\nonl'$. Moreover let $\varphi$ be smooth, compactly supported on $[0,T)\times \R^d$ and nonnegative. Then, we have for any smooth solution $u$ to \eqref{eq:par-hyp} that

\begin{align*}
 -\int_0^T\int_{\R^d} &s'(u) S \dd x\dd t=-\int_0^T\int_{\R^d} s'(u)\varphi \big(\partial_t u +\div F(u)+L \nonl(u)\big) \dd x \dd t \\
&= \int_0^T\int_{\R^d}  \big(s(u)\partial_t \varphi + \eta(u)\cdot\nabla\varphi) \dd x \dd t + \int_{\R^d} s(u_0)\varphi(0,x) \dd x \\
&\quad + \int_0^T\int_{\R^d} \int_{\R^d} s'(u)\varphi  \big(\tau_y\nonl(u)-\nonl(u)-\1_{[1,2)}(\frex)\1_{|y|\le 2}(y) y\cdot \nabla\nonl(\uf)\big) k(x,y) \dd y \dd x \dd t.
\end{align*}
We next use
\begin{align*}
s'(a)(\nonl(b)-\nonl(a))&=\beta(b)-\beta(a) - \int_a^b s''(v)|\nonl(b)-\nonl(v)| \dd v\\
& = \beta(b)-\beta(a) - \int_\R s''(v)|\nonl(b)-\nonl(v)|\1_{\mathrm{conv}\set{a,b}}(v) \dd v,
\end{align*}
which can be derived by Taylor's formula and the fact that $\nonl$ is non-decreasing in order to introduce the absolute value, see \cite[Lemma 22]{AAO20}. Here $\mathrm{conv}\set{a,b}=(\min\set{a,b},\max\set{a,b})$. 
Hence
\begin{align*}
    \int_0^T\int_{\R^d} \int_{\R^d} &s'(u)\varphi(t,x) \big(\tau_y\nonl(u)-\nonl(u)-\1_{[1,2)}(\frex)\1_{|y|\le 2}(y) y\cdot \nabla\nonl(\uf)\big) k(x,y) \dd y \dd x \dd t \\
    &= \int_0^T\int_{\R^d} \beta(u) L\varphi \dd x \dd t - \int_0^T\int_{\R^d} \int_\R s''(v)n(t,x,v)\varphi \dd v \dd x \dd t,
\end{align*}
where we have used that $L$ is $L^2$-symmetric due to $k(x,y)=k(x+y,-y)$ in the last step, and where
\begin{equation}\begin{split}\label{nonl-dis-eq}
n(t,x,v)
&:=\int_{\R^d} |\nonl(u(t,x+y))-\nonl(v)| \1_{\mathrm{conv}\set{\tau_y u(t,x),u(t,x)}}(v) k(x,y) \dd y 
\end{split}\end{equation}
is the nonlocal dissipation measure.
It follows
\begin{align}
\begin{split}\label{entropy-sol-eq}
&\int_0^T\int_{\R^d}  \big(s(u)\partial_t \varphi + \eta(u)\cdot\nabla\varphi - \beta(u)L\varphi\big)\dd x \dd t + \int_{\R^d} s(u_0)\varphi(0,x) \dd x \\
& = \int_0^T\int_{\R^d} \int_\R s''(v)n(t,x,v)\varphi \dd v \dd x \dd t - \int_0^T\int_{\R^d} s'(u) S \dd x \dd t.
\end{split}
\end{align}

This discussion motivates the following definition of entropy solutions.

\begin{defn}\label{def:entropy-sol}
 Let $T\ge 0$, $u_0\in L^1(\R^d_x)\cap L^\infty (\R^d_x)$ and $S\in L^1([0,T]\times \R^d_{x})$.
 We call a function $u\in C([0,T];L^1(\R^d_x))\cap L^\infty([0,T]\times \R^d_x)$ entropy solution to \eqref{eq:par-hyp} if for all $(s,\eta,\beta)$ with $s\in C^2(\R)$, $\eta:\R\to\R^d$ and $\beta:\R\to\R$ with $\eta'=s'F'$ and $\beta'=s'\nonl'$ and all nonnegative, smooth $\varphi$ with compact support in  $[0,T)\times \R^d_x$
  there holds
\begin{align}
\begin{split}\label{entropy-sol-ineq}
&\int_0^T\int_{\R^d}  \big(s(u)\partial_t \varphi + \eta(u)\cdot\nabla\varphi - \beta(u)L\varphi) \dd x \dd t + \int_{\R^d} s(u_0)\varphi(0,x) \dd x \\
& \ge \int_0^T\int_{\R^d} \int_\R s''(v)n(t,x,v)\varphi \dd v \dd x \dd t - \int_0^T\int_{\R^d} s'(u) S \dd x \dd t,
\end{split}
\end{align} 
with the nonlocal dissipation measure $n$ defined in \eqref{nonl-dis-eq}.
\end{defn}

Formally choosing $s(u):=|u-v|$ for $v\in\R$ and then differentiating with respect to $v$ yields the concept of kinetic equation in $\chi(u(t,x),v):=\frac12\partial_v(|v|-|u(t,x)-v|)$.

\begin{defn}
\label{def:kinetic_sol-1} Let $T\ge 0$, $u_0\in L^1(\R^d_x)$ and $S\in L^1([0,T]\times \R^d_{x})$. We say that $u\in C([0,T];L^{1}(\R^{d}_x))$ is a kinetic solution to \eqref{eq:par-hyp} if the corresponding kinetic function $\chi$ satisfies the following:
\begin{enumerate}
\item There is a non-negative measure $m\in\calm^{+}_{t,x,v}$ such that, in the sense of distributions,
\[
\partial_{t}\chi+F'(v)\cdot\nabla_{x}\chi-\nonl'(v)L\chi=\partial_{v}q+\delta_{v=u(t,x)}S\quad\text{on }(0,T)\times\R_{x}^{d}\times\R_{v}
\]
where $q=m+n$, and $n$ is the nonlocal dissipation measure defined in \eqref{nonl-dis-eq}.
\item We have
\begin{equation}\label{eq:decay_dissipation_measures_kin}
\int q(t,x,v)\dd x \dd t\le\mu(v)\in L_{0}^{\infty}(\R),
\end{equation}
where $L_{0}^{\infty}$ is the space of $L^{\infty}$-functions vanishing for $|v|\to\infty$.
\end{enumerate}
\end{defn}

\begin{rem}  A function $u\in C([0,T];L^{1}(\R^{d}_x)) \cap L^\infty([0,T]\times\R^d_x)$ is a kinetic solution if and only if it is an entropy solution.
\end{rem}
The proof of this remark follows along the same line as \cite[Theorem 12]{AAO20}.

\medskip

Since, in the present work, we are deriving regularity results for solutions to \eqref{eq:par-hyp} with $F\equiv 0$, we next introduce a comparatively large class of solutions, requiring only what is needed in the proof of regularity. This notion of generalized kinetic solutions is inspired by \cite[Chapter 4]{Perthame2002}. 

A decisive difference to kinetic solutions is that for generalized kinetic solutions it is not required anymore that the solution takes the form of a kinetic function $f(t,x,v)=\chi(u(t,x),v)$. The reason is that the latter expression is unstable with respect to weak convergence of sequences $u^n$. As a result, the class of generalized kinetic solutions offers favorable stability properties with respect to weak convergence compared to kinetic solutions. 

In contrast to \cite[Chapter 4]{Perthame2002}, for the sake of the proof of regularity, no specific form for the dissipation measure $q=m+n$ like in \eqref{nonl-dis-eq} is required. This leads to the following broad class of solutions. 

\begin{defn}\label{def:gen_kinetic_sol}  Let $T\ge 0$, $u_0\in L^1(\R^d_x)$ and $S\in L^1([0,T]\times \R^d_{x})$. We say that $f \in L^{\infty}([0,T]\times\R^{d}_x\times\R_v)) \cap L^{\infty}([0,T];L^1(\R^{d}_x\times\R_v))$ is a generalized kinetic solution to \eqref{eq:par-hyp} if 
\begin{enumerate}
\item
For some non-negative measure $\nu$ 
we have
\begin{align*}
    |f(t,x,v)|=\sgn(v)f(t,x,v) &\le 1, \\
    \partial_v f=\delta_{v=0}-\nu(t,x,v).
\end{align*}
\item There is a non-negative measure $q\in\calm^{+}$ such that, 
\[
 \partial_{t}f+F'(v)\cdot\nabla_{x}f-\nonl'(v)Lf=\partial_{v}q+ \nu S\quad\text{on }(0,T)\times\R_{x}^{d}\times\R_{v},
\]
in the sense that, for all $\varphi\in C^\infty_c([0,T)\times\R^d_x\times \R_v)$ we have that
\[
-\int\partial_{t}\varphi f-\nabla_{x} \varphi \cdot F'(v) f-\int L \varphi \nonl'(v)f=\int \varphi(0,x,v) \chi(u_0(x),v)-\int \partial_{v}\varphi q +\int \varphi \nu S.
\]
\item We have
\begin{equation}\label{eq:decay_dissipation_measures}
\int q(t,x,v)\dd x \dd t\le\mu(v)\in L_{0}^{\infty}(\R),
\end{equation}
where $L_{0}^{\infty}$ is the space of $L^{\infty}$-functions vanishing for $|v|\to\infty$.

\end{enumerate}
\end{defn}

We note that analogously to \cite[Remark 4.1.4]{Perthame2002} we have $\nu \in  L^\infty_{t,x}\mathcal{M}_v$ with 
\begin{equation}\label{eq:nu}
\int \nu(t,x,v)\dd v = 1\quad\text{for a.e. } (t,x). 
\end{equation}

\begin{rem}\label{eq:kinetic_form_2}
    For a nonnegative measure $\nu \in \mathcal{M}_{t,x,v}$ we set $[\nu](t,x,v):= \nu(t,x,(-\infty,v])$. Observe that by Fubini's theorem we have $\partial_v[\nu]=\nu$, cf.~Example 1.75 in \cite{AFP00}. Due to Definition \ref{def:gen_kinetic_sol} (i) and (ii), we then have that
    \begin{equation}\begin{split}\label{eq:new_q_form}
     \partial_{t}f+F'(v)\cdot\nabla_{x}f-\nonl'(v)Lf
     &=\partial_{v}q+\nu S =\partial_{v}(q+[\nu] S) =: \partial_{v}\qq
    \end{split} \end{equation}
    and, by \eqref{eq:nu},
    \begin{equation}\begin{split}\label{eq:new_q_form_2}
     \|\qq\|_{L^\infty_v \calm_{TV}} 
     \le \|q\|_{L^\infty_v \calm_{TV}}  + \|[\nu] S\|_{L^\infty_v \calm_{TV}} 
     \le \|q\|_{L^\infty_v \calm_{TV}} + \|S\|_{L^1_{t,x}}.
    \end{split} \end{equation}
    We further have that
    \begin{equation}\begin{split}\label{eq:new_q_form_3}
       \qq -[\delta_{v=0}S] 
       = q+[\nu-\delta_{v=0}]S
       = q-([\partial_v f]S)
       = q-fS.
    \end{split} \end{equation}
\end{rem}

\begin{lem} 
\label{lem:L1=0}
  For $\varphi \in C^2_b(\R^d_x)$ we have 
  \begin{equation}\begin{split}\label{eq:Lphi_bound}
      \|L\varphi\|_{L^\infty_x}
      \lesssim \1_{[1,2)}(a)\|D^2\varphi\|_{L^\infty_x}+     
      \1_{(0,1)}(a)\|D^1\varphi\|_{L^\infty_x}+ \|\varphi\|_{L^\infty_x}.
  \end{split}  \end{equation}
For $R>0$ let $\varphi^R\in\CRci(\R^d_x)$ with $|\varphi^R|\le 1$, $\varphi^R=1$ on $B_R(0)$ and $\supp\varphi^R\subset B_{R+1}(0)$. Then, $L\varphi^R$ is uniformly bounded and $L\varphi^R(x) \to 0$ for all $x\in \R^d$.
\end{lem}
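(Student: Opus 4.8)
The plan is to derive both assertions from the single structural quantity $C_0:=\sup_{x\in\R^d}\int_{\R^d}w(y)\,k(x,y)\dd y<\infty$ furnished by \eqref{eqn:sec5_as}, where I abbreviate $w(y):=\bigl(\1_{[1,2)}(\frex)|y|^2+\1_{(0,1)}(\frex)|y|\bigr)\wedge 1$, so that $w(\cdot)k(x,\cdot)\in\LR{1}(\R^d)$ with $\LR{1}$-norm at most $C_0$, uniformly in $x$.

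To prove \eqref{eq:Lphi_bound} I would split the integral defining $L\varphi$ according to the two cases $\frex\in[1,2)$ and $\frex\in(0,1)$. If $\frex\in[1,2)$, I split at $|y|=2$: on $\{|y|\le 2\}$ Taylor's theorem gives $|\varphi(x+y)-\varphi(x)-y\cdot\nabla\varphi(x)|\le\tfrac12|y|^2\,\|D^2\varphi\|_{\LR{\infty}_x}\le 2\,w(y)\,\|D^2\varphi\|_{\LR{\infty}_x}$, using $\tfrac12|y|^2\le 2(|y|^2\wedge1)$ for $|y|\le 2$ and that $w(y)=|y|^2\wedge1$ in this case; on $\{|y|>2\}$ one has $|\varphi(x+y)-\varphi(x)|\le 2\|\varphi\|_{\LR{\infty}_x}=2\,w(y)\,\|\varphi\|_{\LR{\infty}_x}$ since $w(y)=1$ there. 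If $\frex\in(0,1)$ the correction term is absent, and I split at $|y|=1$: on $\{|y|\le1\}$, $|\varphi(x+y)-\varphi(x)|\le|y|\,\|D^1\varphi\|_{\LR{\infty}_x}=w(y)\,\|D^1\varphi\|_{\LR{\infty}_x}$; on $\{|y|>1\}$, $|\varphi(x+y)-\varphi(x)|\le 2\|\varphi\|_{\LR{\infty}_x}=2\,w(y)\,\|\varphi\|_{\LR{\infty}_x}$. Integrating these pointwise bounds against $k(x,y)\dd y$ and using $\int w\,k(x,\cdot)\le C_0$ yields \eqref{eq:Lphi_bound}, with the implicit constant depending only on $C_0$; in particular the integral defining $L\varphi$ converges absolutely.

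For the second assertion I would fix a concrete admissible family, for instance $\varphi^R:=\1_{B_{R+1/2}(0)}\ast\rho$ with $\rho\in\CRci(B_{1/4}(0))$ nonnegative and $\int\rho=1$; this satisfies $|\varphi^R|\le1$, $\varphi^R=1$ on $B_R(0)$, $\supp\varphi^R\subset B_{R+1}(0)$, and moreover $\sup_{R>0}\bigl(\|D^1\varphi^R\|_{\LR{\infty}_x}+\|D^2\varphi^R\|_{\LR{\infty}_x}\bigr)\le\|D^1\rho\|_{\LR{1}(\R^d)}+\|D^2\rho\|_{\LR{1}(\R^d)}<\infty$. Uniform boundedness of $L\varphi^R$ is then immediate from \eqref{eq:Lphi_bound}. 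For the pointwise limit I fix $x\in\R^d$ and take $R>|x|+1$; then $\varphi^R\equiv1$ on the open ball $B_{R-|x|}(x)$, which is a neighborhood of $x$, so $\varphi^R(x)=1$, $\nabla\varphi^R(x)=0$, and the correction term in $L\varphi^R(x)$ drops out, leaving $L\varphi^R(x)=-\int_{\R^d}(\varphi^R(x+y)-1)k(x,y)\dd y$. The integrand vanishes whenever $|x+y|<R$, hence whenever $|y|<R-|x|$, so it is supported in $\{|y|\ge R-|x|\}\subset\{|y|>1\}$, where $w(y)=1$; since $0\le\varphi^R\le1$, this gives $|L\varphi^R(x)|\le\int_{\{|y|\ge R-|x|\}}w(y)\,k(x,y)\dd y$, and this tail integral tends to $0$ as $R\to\infty$ by dominated convergence with dominating function $w(\cdot)k(x,\cdot)\in\LR{1}(\R^d)$.

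The one point I would be careful to state explicitly is that uniform boundedness of $L\varphi^R$ does depend on the cut-off family being chosen with uniformly controlled second derivatives, since an arbitrary family with only the stated support and normalization could have $C^2$-norms blowing up as $R\to\infty$; this is the reason for pinning down the mollified-indicator construction above. Beyond that, the argument is a routine near-field/far-field split followed by a single dominated convergence, and I do not anticipate any real obstacle.
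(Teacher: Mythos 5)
Your proof is correct and follows essentially the same route as the paper's: a near-field/far-field split with Taylor's theorem in the near field and the global sup-bound in the far field, then absorption into the assumed integrability $\sup_x\int w\,k(x,\cdot)\dd y<\infty$, and for the pointwise limit the observation that for $x$ fixed and $R$ large $\varphi^R\equiv1$ near $x$, so that only a tail integral of $k(x,\cdot)$ survives. The only genuine difference is that you replace the paper's ``fix $r$, let $R>r+|x|$, then let $r\to\infty$'' double limit with a direct dominated-convergence statement over $\{|y|\ge R-|x|\}$; that is a cleaner way to package the same fact.

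The one point worth highlighting is your observation that ``$L\varphi^R$ uniformly bounded'' genuinely requires a choice of cut-offs with uniformly controlled $C^1$ (if $\frex\in(0,1)$) or $C^2$ (if $\frex\in[1,2)$) norms: with only $|\varphi^R|\le1$, $\varphi^R\equiv1$ on $B_R$ and $\supp\varphi^R\subset B_{R+1}$, the transition layer can be made arbitrarily thin, and then for $x$ in that layer the near-field integral $\int_{|y|\le1}|\varphi^R(x+y)-\varphi^R(x)|k(x,y)\dd y$ is not controlled. The paper's proof derives uniform boundedness directly from \eqref{eq:Lphi_bound} without commenting on this, so it is implicitly making the same standing choice (a mollified indicator or similar); your explicit construction $\varphi^R=\1_{B_{R+1/2}}\ast\rho$ with $\rho$ fixed pins this down and makes the statement self-contained. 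This is a small but real tightening of the exposition, not a divergence in method.
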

\begin{proof}
  We first prove \eqref{eq:Lphi_bound} by noting that, using Taylor's formula,
  \begin{equation*}\begin{split}
      L\varphi(x)
      &=-\int_{|y|\le 2} \big(\varphi(x+y)-\varphi(x) - \1_{[1,2)}(a)( y\cdot \nabla\varphi(x)\big)k(x,y) \dd y\\
      &-\int_{|y| > 2} \big(\varphi(x+y)-\varphi(x) \big)k(x,y) \dd y \\
      &\lesssim \1_{[1,2)}(a)\|D^2\varphi\|_{L^\infty}\int_{|y|\le 2} |y|^2 k(x,y)\ \dd y +     
      \1_{(0,1)}(a)\|D^1\varphi\|_{L^\infty}\int_{|y|\le 2} |y|\ k(x,y) \dd y \\
      &+\|\varphi\|_{L^\infty}\int_{|y| > 2} k(x,y) \dd y\\
      &\lesssim \|\varphi\|_{C^2} \int_{\R^d_y} ((\1_{[1,2)}(a) |y|^2 +\1_{(0,1)}(a) |y|)\wedge 1)\ k(x,y) \dd y.
  \end{split}  \end{equation*}
  
  In particular, by \eqref{eqn:sec5_as} this implies that $L\varphi^R$ is uniformly bounded. We further have 
  \begin{equation}\begin{split}
    L\varphi^R(x)
    =&-\int_{|x+y|\le R} \big(\varphi^R(x+y)-\varphi^R(x) - \1_{[1,2)}(\frex)\1_{|y|\le 2}(y) y\cdot \nabla\varphi^R(x)\big)k(x,y) \dd y \\
    & -\int_{|x+y| > R} \big(\varphi^R(x+y)-\varphi^R(x) - \1_{[1,2)}(\frex)\1_{|y|\le 2}(y) y\cdot \nabla\varphi^R(x)\big)k(x,y) \dd y.
  \end{split}  \end{equation}
  Therefore, for $r\ge 2$, $R>r$ and $|x|<R-r$ we have
    \begin{equation}\begin{split}
      L\varphi^R(x)
      &\le \int_{|x+y|> R} k(x,y) 
      \le \int_{|y|> r} k(x,y)
      = \int_{|y|> r} (|y|^2 \wedge 1) k(x,y).
    \end{split}  \end{equation}
  Hence, for fixed $r$ and $R\to\infty$ we observe the pointwise convergence to zero. 
\end{proof}

An advantage of the concept of generalized kinetic solutions is that this class of solutions stable/closed in the sense expressed in the following Lemma.
\begin{lem}
    Let $f^n$ be a sequence of generalized kinetic solutions to \eqref{eq:par-hyp} with $u^n_0 \to u_0$ in $L^1(\R^d_x)$ and $S^n \to S$ in $L^1([0,T]\times\R^d_x)$. Suppose that there is a $C>0$ such that $\|f^n\|_{L^{\infty}([0,T];L^1(\R^{d}_x\times\R_v))} \le C$ and that \eqref{eq:decay_dissipation_measures} is satisfied with a uniform $\mu$. Then there is a subsequence $f_{n_k} \rightharpoonup^* f$ in $L^{\infty}_{t,x,v}$, and $f$ is a generalized kinetic solution to \eqref{eq:par-hyp}.
\end{lem}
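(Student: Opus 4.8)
The plan is to pass to the limit in every clause of Definition~\ref{def:gen_kinetic_sol}, using the uniform $L^\infty$ bound together with the structural constraint $|f^n|=\sgn(v)f^n\le 1$ to extract a weak-$*$ limit, and then verifying that the limit inherits all the required properties.

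First I would establish the existence of the limit. Since $\sgn(v)f^n(t,x,v)\in[0,1]$ pointwise, the sequence $(f^n)$ is bounded in $L^\infty_{t,x,v}$, so by Banach--Alaoglu there is a subsequence $f^{n_k}\rightharpoonup^* f$ in $L^\infty_{t,x,v}=(L^1_{t,x,v})^*$; the constraint $\sgn(v)f\in[0,1]$ survives weak-$*$ convergence because the set $\setc{g}{0\le\sgn(v)g\le 1 \text{ a.e.}}$ is convex and weak-$*$ closed. For the time regularity $f\in L^\infty([0,T];L^1(\R^d_x\times\R_v))$ with norm $\le C$, I would test against functions of the form $\varphi(t)\otimes\psi(x,v)$ with $\psi\in L^1_{x,v}$, $\|\psi\|_{L^1_{x,v}}\le 1$, and $\varphi\in L^1(0,T)$, and use lower semicontinuity of the norm under weak-$*$ limits. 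Next, from $\partial_v f^n=\delta_{v=0}-\nu^n$ and the uniform bound $\int\nu^n(t,x,v)\dd v=1$ (cf.~\eqref{eq:nu}), the measures $\nu^n$ are bounded in $L^\infty_{t,x}\calm_v$ hence also, after integrating the bound \eqref{eq:decay_dissipation_measures}, bounded in $\calm_{TV}$; extracting a further subsequence $\nu^{n_k}\rightharpoonup^*\nu$ in $\calm_{TV}$ and passing to the limit in the distributional identity $\partial_v f^n=\delta_{v=0}-\nu^n$ gives $\partial_v f=\delta_{v=0}-\nu$ with $\nu\ge 0$. Similarly the measures $q^n\ge 0$ are bounded in $\calm_{TV}$ by \eqref{eq:decay_dissipation_measures}, so a subsequence converges weak-$*$ to some $q\ge 0$, and the bound $\int q(t,x,v)\dd x\dd t\le\mu(v)$ passes to the limit by testing against nonnegative $\varphi(v)$ and using lower semicontinuity.

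The remaining task is to pass to the limit in the weak formulation of the kinetic equation, i.e.~in
\[
-\int\partial_t\varphi\, f^n-\int L\varphi\,\nonl'(v)f^n=\int\varphi(0,x,v)\chi(u^n_0(x),v)-\int\partial_v\varphi\, q^n+\int\varphi\,\nu^n S^n
\]
for fixed $\varphi\in C^\infty_c([0,T)\times\R^d_x\times\R_v)$ (recall $F\equiv 0$ in our setting). The term $-\int\partial_t\varphi\, f^n\to-\int\partial_t\varphi\, f$ since $\partial_t\varphi\in L^1_{t,x,v}$; the term $-\int L\varphi\,\nonl'(v)f^n\to-\int L\varphi\,\nonl'(v)f$ because $L\varphi\in L^\infty_{t,x}$ by Lemma~\ref{lem:L1=0} (it even lies in $C_b$, and $\varphi$ has compact $v$-support so $\nonl'(v)L\varphi\in L^1_{t,x,v}$ on that support), hence this is a valid test function against the weak-$*$ convergence; the terms $-\int\partial_v\varphi\, q^n\to-\int\partial_v\varphi\, q$ and $\int\varphi(0,\cdot)\chi(u_0^n,\cdot)\to\int\varphi(0,\cdot)\chi(u_0,\cdot)$ follow from weak-$*$ convergence of $q^n$ and from $\|\chi(u_0^n,\cdot)-\chi(u_0,\cdot)\|_{L^1_{x,v}}\lesssim\|u_0^n-u_0\|_{L^1_x}\to 0$.

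The main obstacle is the product term $\int\varphi\,\nu^n S^n$, since this is a product of two quantities converging only in weak/strong senses that are not directly compatible with passing to the limit in a product. Here I would use the trick already present in Remark~\ref{eq:kinetic_form_2}: write $\nu^n S^n=\partial_v([\nu^n]S^n)-\delta_{v=0}S^n$, or more simply work with the rewritten equation \eqref{eq:new_q_form}, $\partial_t f^n-\nonl'(v)Lf^n=\partial_v\qq^n$ with $\qq^n=q^n+[\nu^n]S^n$, and note $\qq^n-fS^n=q^n-f^nS^n$ by \eqref{eq:new_q_form_3}. The bound \eqref{eq:new_q_form_2} gives $\|\qq^n\|_{L^\infty_v\calm_{TV}}\le\|q^n\|_{L^\infty_v\calm_{TV}}+\|S^n\|_{L^1_{t,x}}$, uniformly bounded, so $\qq^n\rightharpoonup^*\qq$ in $\calm_{TV}$ along a subsequence; since $S^n\to S$ strongly in $L^1_{t,x}$ and $f^n\rightharpoonup^* f$ in $L^\infty_{t,x,v}$ with $|f^n|\le 1$, the product $f^nS^n$ converges weakly in $L^1_{t,x,v}$ (strong-times-weak-$*$ with the weak-$*$ factor bounded by $1$), whence $q^n=\qq^n-[\nu^n]S^n$... actually the cleanest route: pass to the limit in $\partial_t f^n-\nonl'(v)Lf^n=\partial_v\qq^n$ directly, obtaining $\partial_t f-\nonl'(v)Lf=\partial_v\qq$ for some nonnegative $\qq\in L^\infty_v\calm_{TV}$ (nonnegativity of $\qq$ needs an argument: $\qq^n=q^n+[\nu^n]S^n$ need not be nonnegative, so instead I would recover $q$ itself by setting $q:=\qq-[\nu]S+\text{correction}$, i.e.~via \eqref{eq:new_q_form_3} define $q$ as the weak-$*$ limit of $q^n=\qq^n-[\nu^n]S^n$, using that $[\nu^n]S^n\to[\nu]S$ since $[\nu^n]\in[0,1]$ bounded and $S^n\to S$ in $L^1$; then $q\ge 0$ is preserved and $\int q\,\dd x\dd t\le\mu$ passes to the limit). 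Unwinding, $\partial_t f-\nonl'(v)Lf=\partial_v q+\nu S$ in the sense of Definition~\ref{def:gen_kinetic_sol}, with $F\equiv 0$. This establishes all three clauses of the definition for $f$, completing the proof.
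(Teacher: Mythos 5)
The paper states this lemma without a proof, so there is no argument to compare against; your plan must stand on its own, and on the whole it does. The weak-$*$ extraction via Banach--Alaoglu, the preservation of the pointwise constraint $\sgn(v)f\in[0,1]$ under weak-$*$ limits, the lower-semicontinuity argument for the $L^\infty_tL^1_{x,v}$ bound, the term-by-term passage to the limit in the weak formulation (using Lemma~\ref{lem:L1=0} for the $L\varphi$-term), and the correct identification of the product $\int\varphi\,\nu^nS^n$ as the one genuinely nontrivial step are all sound.

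Two details should be tightened. First, the $\nu^n$ are \emph{not} bounded in $\calm_{TV}$ of the full domain $[0,T]\times\R^d_x\times\R_v$: the mass normalization \eqref{eq:nu} is per $(t,x)$ with $x$ ranging over all of $\R^d$, and \eqref{eq:decay_dissipation_measures} bounds $q^n$, not $\nu^n$, so your phrasing conflates the two. What you actually need and have is local boundedness, which suffices for weak-$*$ extraction against compactly supported test functions. Second, to pass to the limit in $[\nu^n]S^n$ you need not only $[\nu^n]\in[0,1]$ but also $[\nu^n]\rightharpoonup^*[\nu]$ in $L^\infty_{t,x,v}$; boundedness alone is not enough. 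This does hold, but should be made explicit: integrating $\partial_vf^n=\delta_{v=0}-\nu^n$ from $v=-\infty$ and using $f^n(\cdot,\cdot,-\infty)=0$ gives the identity $[\nu^n]=\1_{v\ge 0}-f^n$, whence $[\nu^n]\rightharpoonup^*\1_{v\ge 0}-f=[\nu]$ is an immediate consequence of $f^n\rightharpoonup^*f$. Once that is in place the strong $L^1$ convergence of $S^n$ lets the product pass to the limit. In fact, the same identity lets you avoid the detour through $\qq^n$ entirely: integrate by parts in $v$ to write $\int\varphi\nu^nS^n=\int\varphi(\cdot,\cdot,0)S^n+\int\partial_v\varphi\,f^nS^n$; the first term converges because $S^n\to S$ in $L^1_{t,x}$, and the second by combining $|f^n|\le 1$, $S^n\to S$ in $L^1$, and $f^n\rightharpoonup^*f$. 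The remaining verifications ($q\ge 0$ and \eqref{eq:decay_dissipation_measures} for the limit) pass by lower semicontinuity against nonnegative test functions, as you indicate.
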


In the following, we use for $\varphi\in C_c^\infty([0,T))$ the short-hand notation $\overline{g}(v):=\int_{\R_+}\int_{\R^{d}_x} \varphi(r)g(r,x,v) \, \dd x\dd r$ for a generic $g:(r,x,v)\mapsto g(r,x,v)$. 

\begin{lem} \label{lem:kinetic_bounded_eta} Let $f$ be a generalized kinetic solution to \eqref{eq:par-hyp} with $u_{0}\in L^{1}(\R_{x}^{d})$, $S\in L^{1}([0,T]\times\R_{x}^{d})$, let $\varphi\in C_c^\infty([0,T))$, and let $\qq$ be defined as in Remark \ref{eq:kinetic_form_2}. Then, for every $\eta\in C^{\infty}(\R_{v})$ with $\eta'\in C_{c}^{\infty}(\R_{v})$, we have 
\begin{align}
\begin{split}
\int_{\R_v}\eta' (\overline{\qq} - 1_{v>0} \overline{S}) \,\dd v
=&\int_{\R_{v}} \eta  \left(\int_{\R_{x}^{d}}\varphi(0)\chi(u_0(x),v)\,\dd x\right)\dd v\\
&+\int_{\R_+}\int_{\R_{v}}\eta \varphi' \left(\int_{\R_{x}^{d}}f\,\dd x\right)\dd v\dd r+ \eta(0) \overline{S}.
\label{eq:ctn}
\end{split}
\end{align}
\end{lem}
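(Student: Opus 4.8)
The plan is to test the kinetic equation, written in the form of Remark \ref{eq:kinetic_form_2} as $\partial_{t}f+F'(v)\cdot\nabla_{x}f-\nonl'(v)Lf=\partial_{v}\qq$ with $\qq=q+[\nu]S$ (cf.\ \eqref{eq:new_q_form}), against the product function
\[
\psi_{R,L}(r,x,v):=\varphi(r)\,\varphi^{R}(x)\,\eta(v)\,\zeta_{L}(v),
\]
where $\varphi^{R}\in\CRci(\R^{d}_{x})$ is the spatial cut-off provided by Lemma \ref{lem:L1=0} and $\zeta_{L}(v):=\zeta(v/L)$ for a fixed $\zeta\in\CRci(\R)$ with $\zeta\equiv1$ on $[-1,1]$, $\supp\zeta\subset[-2,2]$, $0\le\zeta\le1$. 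Since $\eta'\in\CRci(\R_{v})$, the function $\eta$ is bounded and equals constants $a_{\pm}$ near $\pm\infty$; and since $\nonl'$ is bounded on $\supp\zeta_{L}$, we have $\nonl'\eta\zeta_{L}f\in\LR{1}_{t,x,v}$ for each fixed $L$ and $\psi_{R,L}$ is an admissible test function in Definition \ref{def:gen_kinetic_sol}(ii). Plugging it in, and using that $L$ acts only in $x$ so that $L\psi_{R,L}=\varphi(r)(L\varphi^{R})(x)\eta(v)\zeta_{L}(v)$, produces an identity between integrals.

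First I would send $R\to\infty$. The transport contribution carries $\nabla\varphi^{R}$, which is bounded uniformly in $R$ and supported on $B_{R+1}\setminus B_{R}$; together with local boundedness of $F'$ on $\supp\zeta_{L}$ and $f\in\LR{1}_{t,x,v}$ this term tends to $0$. The nonlocal contribution carries $L\varphi^{R}$, which by Lemma \ref{lem:L1=0} is bounded uniformly in $R$ and converges to $0$ pointwise, so with $\nonl'\eta\zeta_{L}f\in\LR{1}_{t,x,v}$ and dominated convergence it also tends to $0$. The remaining terms converge by dominated convergence ($\varphi^{R}\to1$, $|\varphi^{R}|\le1$), using $f\in\LR{\infty}_{t}\LR{1}_{x,v}$, $\int_{\R_{v}}\int_{\R^{d}_{x}}|\chi(u_{0}(x),v)|\,\dd x\,\dd v=\|u_{0}\|_{\LR{1}_{x}}<\infty$, and $\qq\in\LR{\infty}_{v}\calm_{TV}$ from \eqref{eq:new_q_form_2}. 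The outcome, in the bar-notation of the lemma, is
\[
\int_{\R_{v}}\eta'\zeta_{L}\,\overline{\qq}\,\dd v=\int_{\R_{v}}\eta\zeta_{L}\Bigl(\int_{\R^{d}_{x}}\varphi(0)\chi(u_{0}(x),v)\,\dd x\Bigr)\dd v+\int_{\R_{+}}\!\int_{\R_{v}}\varphi'\eta\zeta_{L}\Bigl(\int_{\R^{d}_{x}}f\,\dd x\Bigr)\dd v\,\dd r-\int_{\R_{v}}\eta\,\zeta_{L}'\,\overline{\qq}\,\dd v.
\]

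Next I would send $L\to\infty$. As soon as $[-L,L]\supset\supp\eta'$ the left-hand side is exactly $\int_{\R_{v}}\eta'\overline{\qq}\,\dd v$, and the first two terms on the right converge by dominated convergence with majorants bounded by $|\varphi(0)|\,\|\eta\|_{\infty}\|u_{0}\|_{\LR{1}_{x}}$ and $\|\varphi'\|_{\infty}\|\eta\|_{\infty}\|f\|_{\LR{\infty}_{t}\LR{1}_{x,v}}$. The decisive term is $-\int_{\R_{v}}\eta\,\zeta_{L}'\,\overline{\qq}\,\dd v$, whose integrand lives on $\{L\le|v|\le2L\}$, where $|\zeta_{L}'|\le\|\zeta'\|_{\infty}/L$ and $\eta\equiv a_{\pm}$. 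Splitting $\overline{\qq}=\overline{q}+\overline{[\nu]S}$: the first piece vanishes because \eqref{eq:decay_dissipation_measures} and $q\ge0$ give $|\overline{q}(v)|\le\|\varphi\|_{\infty}\mu(v)$ with $\mu\in L^{\infty}_{0}$, so $\tfrac1L\int_{L\le|v|\le2L}|\overline{q}|\,\dd v\lesssim\sup_{|v|\ge L}\mu(v)\to0$; for the second piece, $[\nu](r,x,v)=\nu(r,x,(-\infty,v])\in[0,1]$ increases to $\nu(r,x,\R)=1$ as $v\to+\infty$ and decreases to $0$ as $v\to-\infty$ by \eqref{eq:nu}, so by dominated convergence $\overline{[\nu]S}(v)\to\overline{S}$ as $v\to+\infty$ and $\to0$ as $v\to-\infty$. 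Combining this with $\int_{L}^{2L}\zeta_{L}'\,\dd v=\zeta(2)-\zeta(1)=-1$ and $\int_{-2L}^{-L}\zeta_{L}'\,\dd v=1$, and using $\eta\equiv a_{\pm}$ on those ranges, yields $-\int_{\R_{v}}\eta\,\zeta_{L}'\,\overline{\qq}\,\dd v\to a_{+}\overline{S}$, where $a_{+}:=\lim_{v\to+\infty}\eta(v)$.

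Passing to the limit, I would then have
\[
\int_{\R_{v}}\eta'\,\overline{\qq}\,\dd v=\int_{\R_{v}}\eta\Bigl(\int_{\R^{d}_{x}}\varphi(0)\chi(u_{0}(x),v)\,\dd x\Bigr)\dd v+\int_{\R_{+}}\!\int_{\R_{v}}\varphi'\eta\Bigl(\int_{\R^{d}_{x}}f\,\dd x\Bigr)\dd v\,\dd r+a_{+}\overline{S},
\]
and the claim \eqref{eq:ctn} follows by writing $a_{+}=\eta(0)+\int_{0}^{\infty}\eta'(v)\,\dd v=\eta(0)+\int_{\R_{v}}\eta'(v)1_{v>0}\,\dd v$, so that $a_{+}\overline{S}=\eta(0)\overline{S}+\int_{\R_{v}}\eta'1_{v>0}\overline{S}\,\dd v$, and moving the last integral to the left. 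The hard part will be the analysis of the error term $-\int_{\R_{v}}\eta\,\zeta_{L}'\,\overline{\qq}\,\dd v$: it does not vanish in the limit but produces exactly the boundary contribution $a_{+}\overline{S}$, and extracting it requires simultaneously the decay \eqref{eq:decay_dissipation_measures} of $q$ and the identification of $[\nu]$ with the cumulative distribution function of a probability measure via \eqref{eq:nu}.
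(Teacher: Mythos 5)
Your proposal is correct and follows essentially the same two-step strategy as the paper's proof: first remove the spatial cutoff via Lemma \ref{lem:L1=0} and dominated convergence, then remove a velocity cutoff and account for the boundary contribution at $v\to+\infty$. The only cosmetic difference is in the bookkeeping of that boundary term — you split $\overline{\qq}=\overline{q}+\overline{[\nu]S}$ and show $\overline{[\nu]S}(v)\to\overline{S}$ (resp.\ $0$) as $v\to\pm\infty$, then recover the $\eta(0)\overline{S}$ and $\int\eta'1_{v>0}\overline{S}$ terms from $a_+=\eta(0)+\int_0^\infty\eta'$, whereas the paper writes $\qq=(q+[\nu-\delta_{v=0}]S)+1_{v>0}S$, observes $\overline{q+[\nu-\delta_{v=0}]S}\in L^\infty_0$, and extracts $-\eta(0)\overline{S}$ directly from $\int(\eta\varphi_R)'1_{v>0}\,\dd v$. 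Since $[\nu]-1_{v>0}=[\nu-\delta_{v=0}]$, these are the same observation in two guises, and both computations yield \eqref{eq:ctn}.
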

\begin{proof}
  By a standard approximation argument, dominated convergence, and Lemma \ref{lem:L1=0}, the kinetic formulation yields \eqref{eq:ctn}, for every $\eta\in C_{c}^{\infty}(\R_{v})$. Next we prove that \eqref{eq:ctn} continues to hold for all $\eta\in C^{\infty}(\R_{v})$ with $\eta'\in C_{c}^{\infty}(\R_{v})$. For $R>0$ let $\varphi_R\in C_{c}^{\infty}(\R_{v})$ be such that $\varphi_R(v)= 1$ for $|v|\le R$, $\supp\varphi_R\subset [-(R+1),R+1]$ and $|\varphi_R|+|\varphi_R'|\lesssim 1$. 
  Defining $\eta_R:=\eta\varphi_R$, we have by \eqref{eq:ctn}
\begin{align*}
\int_{\R_v}(\eta\varphi_R)' \overline{\qq} \,\dd v
&=\int_{\R_{v}} \eta_R  \left(\int_{\R_{x}^{d}}\varphi(0)\chi(u_0(x),v)\,\dd x\right)\dd v+\int_{\R_{v}}\eta_R\left(\int_{\R_+}\int_{\R_{x}^{d}}\varphi' f\,\dd x\,\dd r\right)\dd v .
\end{align*}
Since $\eta_R$ is uniformly bounded in $R$, $\eta_R\to\eta$ locally uniformly, and since
\begin{equation}\label{eq:l1_v}
  v\mapsto \int_{\R_{x}^{d}}\varphi(0)\chi(u_0(x),v)\,\dd x+\int_{\R_+}\int_{\R_{x}^{d}}\varphi'(r)f(r,x,v)\,\dd x\dd r  \in L^{1}(\R_{v}),
\end{equation}
we may take the limit $R\to\infty$ in the two terms on the right-hand side by dominated convergence. We next note that
\begin{align*}
\int_{\R_v}(\eta\varphi_R)' \overline{\qq} \,\dd v 
&= \int_{\R_v}(\eta\varphi_R)' \overline{(q+[\nu-\delta_{v=0}]S)} \,\dd v + \int_{\R_v}(\eta\varphi_R)' \overline{[\delta_{v=0}]S} \,\dd v \\
&= \int_{\R_v}(\eta\varphi_R)' \overline{(q+[\nu-\delta_{v=0}]S)} \,\dd v - \eta(0) \overline{S}
\end{align*}
and
\begin{align*}
\int_{\R_v}(\eta\varphi_R)' \overline{(q+[\nu-\delta_{v=0}]S)} \,\dd v 
=\int_{\R_v}(\eta'\varphi_R+\eta\varphi_R') \overline{(q+[\nu-\delta_{v=0}]S)} \,\dd v.
\end{align*}
By dominated convergence the contribution from the term $\eta'\varphi_R$ converges, since $\eta'$ has compact support and $\overline{(q+[\nu-\delta_{v=0}]S)} \in L^1_{loc}(\R_v)$. By definition of $[\nu]$, \eqref{eq:nu} and dominated convergence, we have that $\overline{[\nu-\delta_{v=0}]S} \in L_0^\infty(\R_v)$. 
Hence, the second term on the right hand side vanishes for $R\to \infty$, since both $\eta$ and $\varphi_R'$ are bounded with $\supp \varphi_R'\subset[-(R+1),-R]\cup[R,R+1]$. In conclusion, 
\begin{align*}
\int_{\R_v}(\eta\varphi_R)' \overline{\qq} \,\dd v 
&\to \int_{\R_v} \eta' \overline{(q+[\nu-\delta_{v=0}]S)} \,\dd v - \eta(0) \overline{S}\\
&\to \int_{\R_v} \eta' (\overline{\qq} - 1_{v>0} \overline{S}) \,\dd v - \eta(0) \overline{S}. 
\end{align*}
\end{proof}

In \cite{GST20} the following estimate on the kinetic measure $q$ was provided for local PDEs. We carefully need to repeat the argument with additional subtleties caused by the possible lack of continuity in time of generalized kinetic solutions.

\begin{lem} \label{lem:ph-est-1}Let $f$ be a generalized kinetic solution to \eqref{eq:par-hyp} with $u_{0}\in L^{1}(\R_{x}^{d})$, $S\in L^{1}([0,T]\times\R_{x}^{d})$, let $\varphi\in C_c^\infty([0,T))$, and let $\qq$ be defined as in Remark \ref{eq:kinetic_form_2}. Then, the map $v\mapsto  \overline{\qq}(v)$ is continuous and, for all $v_{0}\in\R_{v}$, and we have
\begin{equation}\label{eq:linfty_bound_q}
\begin{split}
\int_{[0,T)}\int_{\R^{d}_x} \qq(r,x,v_0)\,\dd x\dd r
&\le \int_{\R_{x}^{d}}(\sgn(v_{0})(u_{0}-v_{0}))_{+}\,\dd x\,\dd r. 
\end{split}
\end{equation}
\end{lem}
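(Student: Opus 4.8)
The plan is to test the kinetic formulation of Definition \ref{def:gen_kinetic_sol} with cleverly chosen functions $\eta$ and then pass to a limit. Concretely, I would start from the identity \eqref{eq:ctn} in Lemma \ref{lem:kinetic_bounded_eta}, which reads
\[
\int_{\R_v}\eta' (\overline{\qq} - 1_{v>0} \overline{S}) \,\dd v
=\int_{\R_{v}} \eta  \Big(\int_{\R_{x}^{d}}\varphi(0)\chi(u_0(x),v)\,\dd x\Big)\dd v
+\int_{\R_+}\int_{\R_{v}}\eta \varphi' \Big(\int_{\R_{x}^{d}}f\,\dd x\Big)\dd v\dd r+ \eta(0) \overline{S},
\]
valid for all $\eta\in C^\infty(\R_v)$ with $\eta'\in C_c^\infty(\R_v)$. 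The idea is to take $\eta = \eta_{v_0,\eps}$ an approximation of $\eta(v)=(v-v_0)_+$ when $v_0>0$ (resp.\ $\eta(v)=(v_0-v)_+$ when $v_0<0$), so that $\eta'$ approximates $\sgn(v_0)\1_{\mathrm{conv}\{v_0,\cdot\}}$, i.e.\ $\1_{(v_0,\infty)}$ or $-\1_{(-\infty,v_0)}$. Plugging this in and passing $\eps\to 0$, the left-hand side converges (by dominated convergence, using $\overline{\qq}\in L^1_{loc}$ and $\overline S$ bounded) to $\int_{v_0}^{\infty}\overline{\qq}\,\dd v - \int_{v_0}^\infty \1_{v>0}\overline S\,\dd v$, while on the right-hand side, $\int_{\R_v}\eta'\chi(u_0(x),v)\,\dd v$ integrates out to $(\sgn(v_0)(u_0(x)-v_0))_+$ against $\varphi(0)$, and similarly the $f$-term and the $\eta(0)\overline S$ term rearrange. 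The key algebraic input is the pointwise identity $\int_{\R_v} \1_{v>v_0}\chi(a,v)\,\dd v = (a-v_0)_+ - (-v_0)_+$ (and its mirror for $v_0<0$), which is the same kind of elementary computation already used in deriving the kinetic formulation.

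The second ingredient is to upgrade the integrated identity into a pointwise-in-$v_0$ statement. For this I would first establish the continuity of $v\mapsto \overline{\qq}(v)$: since $\qq=q+[\nu]S$ with $q\in\mathcal M^+$ and $[\nu]$ monotone in $v$, one sees $v\mapsto\int_{[0,T)}\int_{\R^d}\qq(r,x,v)\,\dd x\dd r$ has at most countably many jumps; but in fact by taking $\eta'$ a narrow bump at $v_0$ in \eqref{eq:ctn} and letting the width go to zero, the right-hand side (being continuous in the localization parameter because $\chi(u_0,\cdot)$ and $f(\cdot,\cdot,v)$ are in $L^1_v$ — cf.\ \eqref{eq:l1_v}) forces $\overline{\qq}$ to have no atoms, hence to be continuous. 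Granted continuity, the integrated inequality $\int_{v_0}^\infty \overline{\qq}\,\dd v \le \int_{v_0}^\infty (\text{RHS terms})\,\dd v$ can be differentiated: because the quantity $\int_{[0,T)}\int_{\R^d}\qq(r,x,v)\,\dd x\dd r$ is nonincreasing for $v>0$ and nondecreasing for $v<0$ (this follows from $\qq$ being a nonnegative measure "beyond" the $\delta_{v=0}$ correction, i.e.\ from \eqref{eq:new_q_form_3}: $\qq - [\delta_{v=0}S] = q - fS$ has the right monotonicity once we note $q\ge 0$ and $\mathrm{sgn}(v)f\ge 0$), the integral $\int_{v_0}^\infty \overline{\qq}$ determines $\overline{\qq}(v_0)$ from above via a one-sided derivative, yielding \eqref{eq:linfty_bound_q}.

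The main obstacle I anticipate is precisely the step of going from the $v$-integrated identity to the pointwise bound at an arbitrary $v_0$, handled carefully in the absence of time-continuity of $f$ — the remark before the lemma already flags this. The subtlety is that $\overline{\qq}$ involves the measure $q$ which a priori is only in $\mathcal M^+$ with the decay bound \eqref{eq:decay_dissipation_measures}, so one must argue its continuity in $v$ rather than assume it; and one must correctly track the $\delta_{v=0}S$ contribution (the term $1_{v>0}\overline S$ and the $\eta(0)\overline S$ term) so that the final clean inequality \eqref{eq:linfty_bound_q} emerges with the right sign and the right right-hand side $\int_{\R^d_x}(\sgn(v_0)(u_0-v_0))_+\,\dd x$. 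I would organize the proof as: (i) continuity of $v\mapsto\overline{\qq}(v)$ via a localization argument in \eqref{eq:ctn}; (ii) the limit $\eps\to 0$ in \eqref{eq:ctn} with $\eta=\eta_{v_0,\eps}$ to get the integrated identity; (iii) monotonicity of $v_0\mapsto\int\int\qq$ from \eqref{eq:new_q_form_3} together with $q\ge0$ and $\sgn(v)f\ge0$; (iv) differentiate/take one-sided limits to conclude \eqref{eq:linfty_bound_q}.
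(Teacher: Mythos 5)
Your opening move is correct: the key tool is indeed the identity \eqref{eq:ctn} from Lemma \ref{lem:kinetic_bounded_eta}, and the strategy is to test it with $\eta$ localized near $v_0$. However, the specific choice of test function and the subsequent reasoning diverge from the paper in ways that create real problems.

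You propose testing with $\eta \approx (v-v_0)_+$, so that $\eta' \approx \1_{(v_0,\infty)}$, producing a $v$-integrated identity $\int_{v_0}^\infty\overline{\qq} = \ldots$, which you then want to ``differentiate'' using a monotonicity of $v_0 \mapsto \int\!\int \qq(r,x,v_0)\,\dd x\,\dd r$. There are two gaps here. First, $\eta' = \1_{(v_0,\infty)}$ is not compactly supported, so this $\eta$ is not admissible in \eqref{eq:ctn}; one would have to truncate, and then face the question of whether $\overline{\qq} - 1_{v>0}\overline{S}$ is integrable over $(v_0,\infty)$ at all — this is not controlled by the hypotheses ($\overline{q}$ only vanishes at infinity via \eqref{eq:decay_dissipation_measures}, not in $L^1$). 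Second, and more seriously, the claimed monotonicity does not follow from what you invoke: $q\ge 0$ as a measure and $\sgn(v)f\ge 0$ say nothing about monotonicity of the $v$-marginal $\int\!\int (q - fS)$, particularly since $S$ has arbitrary sign, so $-fS$ has no definite sign. Step (iii) of your outline therefore does not go through, and step (iv) depends on it.

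The paper avoids all of this by a simpler device, and interestingly you already describe it in passing in your continuity step (i): take $\eta$ such that $\eta' = \phi_\pm^\eps(\cdot - v_0)$ is a narrow bump approximating $\delta_{v=v_0}$ (with $\eta(v_0)=0$, so $\eta \to \sgn_\pm(v-v_0)$). Passing $\eps\searrow 0$ in \eqref{eq:ctn} — with the prerequisite that $\overline{\qq}$ has one-sided limits, which follows since \eqref{eq:ctn} and \eqref{eq:l1_v} force $\overline{\qq}\in BV_{loc}(\R_v)$ — gives the pointwise identity \eqref{eq:q-id} for $\overline{\qq}(v_0\pm)$. Comparing the $+$ and $-$ limits shows they coincide, giving continuity, and the same identity \eqref{eq:q-id} already contains the desired bound: one observes that for $\varphi'\le 0$ the $f$-term has favorable sign (using $\sgn(v)f\ge 0$), and then lets $\varphi \nearrow 1_{[0,T)}$ via Fatou and dominated convergence. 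No $v$-integration, no differentiation, no monotonicity needed. In short: the bump-function argument you suggest merely for continuity is actually the whole proof — it directly produces the pointwise formula from which the inequality falls out, whereas your integrated-identity route both introduces inadmissible test functions and hinges on an unjustified monotonicity.
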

\begin{proof}
We first argue that $\overline{\qq}$ has left and right limits.
Due to \eqref{eq:l1_v}, Lemma \ref{lem:kinetic_bounded_eta} implies $\overline{\qq} \in BV_{loc}(\R_v)$ 
and thus the existence of left and right limits.
\medskip

Assume first $v_0\in \R_+$. Let $\phi_{\pm}\in C_{c}^{\infty}(\R_{v})$ with $\phi_{\pm}\ge 0$, $\supp \phi_{+}\subset[0,1]$, $\supp \phi_{-}\subset[-1,0]$, $\int_{\R_v}\phi_{\pm} \dd v=1$ and define $\phi_{\pm}^\eps(v)=\eps^{-1}\phi_{\pm}(\eps^{-1}v)$ for $\eps>0$. Moreover let $\eta_{\pm}^\eps$ be such that $(\eta_{\pm}^\eps)'(v)=\phi_{\pm}^\eps(v-v_0)$ and $(\eta_{\pm}^\eps)(v_0)=0$. Observe that $(\eta_{\pm}^{\eps})'\to\delta_{v=v_0}$ and $\eta_{\pm}^{\eps}(v)\to \sgn_{+}(v-v_0)$ as $\eps\searrow 0$ independent of the choice of $\pm$. Choosing now $\eta:=\eta_{\pm}^\eps$ in \eqref{eq:ctn} and using dominated convergence to take the limit $\eps\searrow 0$, we obtain 
\begin{equation}\begin{split}\label{eq:q-id}
  \overline{\qq}(v_{0}\pm) & = \int_{\R_{x}^{d}}\varphi(0)(u_0-v_0)_+\,\dd x+\int_{\R_+} \varphi' \left(\int_{\R_{x}^{d}}\int_{\R_{v}}\sgn_+(v-v_0) f\,\dd v\dd x\right)\dd r+  \overline{S}.
\end{split}\end{equation}
In particular, we deduce that $\overline{\qq}(v_{0}-)=\overline{\qq}(v_{0}+)$, so that $\overline{\qq}$ is continuous on $\R_+$. The case $v_{0}\in\R_{-}$ is treated analogously replacing the conditions $\phi_{\pm}\ge 0$ and $\int_{\R_v}\phi_{\pm} \dd v=1$ by $\phi_{\pm}\le 0$ and $\int_{\R_v}\phi_{\pm} \dd v=-1$, respectively, so that $\eta_{\pm}^{\eps}(v)\to \sgn_{+}(v-v_0)$ is replaced by $\eta_{\pm}^{\eps}(v)\to \sgn_{-}(v-v_{0})$. We obtain that
\begin{equation*}
\begin{split}
&\int_{\R_+}\int_{\R^{d}_x}\varphi(r) \qq(r,x,v_0)\,\dd x\dd r \\
&\le
\int_{\R_{x}^{d}}\varphi(0)(\sgn(v_{0})(u_0-v_0))_+\,\dd x
+\int_{\R_+} \varphi' \left(\int_{\R_{x}^{d}}\int_{\R_{v}}\sgn_{\sgn(v_{0})}(v-v_0) f\,\dd v\dd x\right)\dd r+\overline{S}.
\end{split}
\end{equation*}
Note that if $\varphi' \le 0$ then the second term on the right hand side has non-negative sign, due to Definition \ref{def:gen_kinetic_sol}, (iii). Choosing $\varphi$ to approximate $1_{[0,T)}$, and using Fatou's Lemma and dominated convergence we deduce \eqref{eq:linfty_bound_q}.
\end{proof}

\begin{lem} \label{lem:ph-est-1a}Let $f$ be a generalized kinetic solution to \eqref{eq:par-hyp} with $u_{0}\in L^{1}(\R_{x}^{d})$, $S\in L^{1}([0,T]\times\R_{x}^{d})$. 
Then,  
\begin{equation}\label{eq:f_est}
\begin{split}
\|f\|_{L^{\infty}([0,T];L^1(\R^{d}_x\times\R_v))} \le \|u_0\|_{L^1_x}+3\|S\|_{L^1_{t,x}}.
\end{split}
\end{equation}
\end{lem}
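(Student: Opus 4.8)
The plan is to test the kinetic formulation from Definition \ref{def:gen_kinetic_sol} against a $v$-independent cutoff in space and the indicator of a time interval, integrate out the $v$-variable, and thereby recover an $L^1$ bound. First I would fix $t_0 \in [0,T)$ and apply Lemma \ref{lem:ph-est-1} with a test function $\varphi \in C_c^\infty([0,T))$ that approximates $\1_{[0,t_0)}$. Recalling from \eqref{eq:new_q_form_3} that $\qq - [\delta_{v=0}S] = q - fS$, and that $q \ge 0$, this yields a pointwise-in-$v_0$ estimate
\begin{equation*}
\int_{[0,t_0)}\int_{\R^d_x} \big(q(r,x,v_0) - f(r,x,v_0)S(r,x)\big)\,\dd x\,\dd r \le \int_{\R^d_x}(\sgn(v_0)(u_0 - v_0))_+\,\dd x + \|S\|_{L^1_{t,x}},
\end{equation*}
the last term accounting for the $[\delta_{v=0}S]$ contribution. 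Since $|f| \le 1$, the term involving $fS$ is controlled by $\|S\|_{L^1_{t,x}}$, so $\int_{[0,t_0)}\int_{\R^d_x} q(r,x,v_0)\,\dd x\,\dd r \le \int_{\R^d_x}(\sgn(v_0)(u_0 - v_0))_+\,\dd x + 2\|S\|_{L^1_{t,x}}$.

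Next I would integrate the kinetic equation \eqref{eq:new_q_form} in $v$ over $\R$, against the same time cutoff $\varphi$ and a spatial cutoff $\varphi^R$ as in Lemma \ref{lem:L1=0}. The $L\varphi$-term vanishes in the limit $R\to\infty$ by Lemma \ref{lem:L1=0} and dominated convergence (using $f \in L^\infty_tL^1_{x,v}$ is \emph{a priori} only assumed finite for some constant — but that is exactly what we may use as a working hypothesis, since the statement is about bounding that norm; alternatively one argues at the level of an approximating sequence where finiteness is known). Using $\int_{\R_v}\partial_v\qq\,\dd v = 0$ in the distributional sense (the total $v$-mass of a $v$-derivative), one gets the macroscopic balance: for a.e. $t_0$,
\begin{equation*}
\int_{\R^d_x}\int_{\R_v} f(t_0,x,v)\,\dd v\,\dd x = \int_{\R^d_x}\int_{\R_v}\chi(u_0(x),v)\,\dd v\,\dd x + \int_{[0,t_0)}\int_{\R^d_x}\Big(\int_{\R_v}\nu(r,x,v)\,\dd v\Big)S(r,x)\,\dd x\,\dd r.
\end{equation*}
But $\int_{\R_v}\chi(u_0,v)\,\dd v = u_0$, and by \eqref{eq:nu} the inner $\nu$-integral equals $1$, so the right side is $\int u_0 + \int_{[0,t_0)}\int S$. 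This, however, only controls the signed integral $\int\int f\,\dd v\,\dd x$, not $\|f(t_0)\|_{L^1_{x,v}}$. To recover the absolute value, I would instead integrate against $\sgn(v)$ (approximated by smooth $\eta$ with $\eta' \ge 0$ supported near $0$ and $\eta \to \sgn$): since $|f| = \sgn(v)f$ by Definition \ref{def:gen_kinetic_sol}(i), $\int_{\R_v}\sgn(v)f\,\dd v = \int_{\R_v}|f|\,\dd v$, and then $\|f(t_0)\|_{L^1_{x,v}} = \int\int \sgn(v)f\,\dd v\,\dd x$. Testing \eqref{eq:new_q_form} against $\sgn(v)\varphi(r)\varphi^R(x)$ produces a term $-\int\int \sgn'(v)\qq = -\int\int 2\delta_{v=0}\qq = -2\overline{\qq}(0)$, which by Lemma \ref{lem:ph-est-1} with $v_0 = 0$ is bounded below, plus the $\nu S$ and $\chi(u_0)$ contributions handled as above.

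The main obstacle I anticipate is the bookkeeping of the distributional $v$-integration and the boundary term at $v=0$: one must carefully identify $\int_{\R_v}\sgn(v)\,\partial_v\qq\,\dd v = -2\qq|_{v=0}$ in the right (continuous, thanks to Lemma \ref{lem:ph-est-1}) sense, and combine the bound $\overline{\qq}(0) \le \int_{\R^d_x}(u_0)_+\,\dd x + 2\|S\|_{L^1_{t,x}}$ (and its $\R_-$ analogue giving the $(u_0)_-$ part) with the macroscopic identity, keeping track of all occurrences of $\|S\|_{L^1_{t,x}}$. Summing the contributions from $\R_+$ and $\R_-$ should give $\|f(t_0)\|_{L^1_{x,v}} \le \|u_0\|_{L^1_x} + 3\|S\|_{L^1_{t,x}}$; taking the essential supremum over $t_0 \in [0,T)$ yields \eqref{eq:f_est}. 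A minor point is that generalized kinetic solutions need not be time-continuous, so the inequality should first be derived for a.e.\ $t_0$ via the $\varphi \approx \1_{[0,t_0)}$ approximation and Fatou, exactly as in the proof of Lemma \ref{lem:ph-est-1}.
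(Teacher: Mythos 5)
Your high-level plan — test the kinetic formulation against a smooth approximation of $\sgn(v)$, kill the $L\varphi$ term with the $\varphi^R$ cut-off of Lemma~\ref{lem:L1=0}, identify the boundary term $-2\overline{\qq}(0)$ via the continuity of $\overline{\qq}$ from Lemma~\ref{lem:ph-est-1}, and then pass $\varphi\to\1_{[0,t_0)}$ — is exactly the paper's mechanism. The paper packages this as ``add \eqref{eq:q-id} at $v_0=0$ for $\sgn_+$ and the analogous relation for $\sgn_-$'', which after simplification is the same identity
\[
 -\int_{\R_+}\varphi'\Big(\int_{\R^d_x}\int_{\R_v}|f|\,\dd v\,\dd x\Big)\dd r \;=\; \int_{\R^d_x}\varphi(0)|u_0|\,\dd x \;-\; 2\,\overline{\qq}(0) \;+\; O(\|S\|_{L^1_{t,x}}),
\]
so up to this point there is no real divergence between the two arguments.

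The gap is in the final step. To turn the displayed identity into an upper bound for $\|f(t_0)\|_{L^1_{x,v}}$ you need an upper bound on $-2\,\overline{\qq}(0)$, equivalently a \emph{lower} bound on $\overline{\qq}(0)$. You instead invoke Lemma~\ref{lem:ph-est-1} at $v_0=0$, which produces an \emph{upper} bound $\overline{q}(0)\le\|(u_0)_\pm\|_{L^1_x}$ — this gives only a lower bound on $-2\,\overline{\qq}(0)$, and therefore a lower bound on $\|f(t_0)\|_{L^1_{x,v}}$, which is vacuous for \eqref{eq:f_est}. The ingredient you actually need is elementary and does not involve Lemma~\ref{lem:ph-est-1} at all: by Definition~\ref{def:gen_kinetic_sol}(ii) the measure $q$ is nonnegative, and since $0\le[\nu]\le 1$ one has $\qq = q + [\nu]S \ge [\nu]S \ge -|S|$, hence $\overline{\qq}(0)\ge -\|S\|_{L^1_{t,x}}$. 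This is precisely what the paper records as ``$-\qq\le -[\nu]S\le|S|$ by \eqref{eq:nu}''. If you replace your Lemma-\ref{lem:ph-est-1} step with this one-line pointwise bound, the rest of your computation (including the $\sgn(v)\chi(u_0,\cdot)$ bookkeeping and the Fatou passage in $t_0$) goes through and recovers the claimed estimate; the detour through the $v$-integrated macroscopic balance with $\eta\equiv 1$ is, as you noticed, not needed.
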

\begin{proof}
   Adding \eqref{eq:q-id} for $v_0=0$ and the analogous equality for $\sgn_-$ we have that
   \begin{align}
\begin{split}
-\int_{\R_+}\varphi' \left(\int_{\R_{x}^{d}}\int_{\R_{v}} |f|\,\dd x\dd v\right)\dd r
&\le \int_{\R_{x}^{d}}\varphi(0)|u_0|(x)\,\dd x-2\int_{\R_+}\int_{\R_{x}^{d}}\int_{\R_v}\varphi\qq+ \overline{S},
\end{split}
\end{align}
and we conclude since $-\qq \le  -[\nu] S \le |S|$ by  \eqref{eq:nu}. 
\end{proof}

%
\section{Application to nonlocal degenerate PDEs
}\label{App_PME}

In this section, we provide the proof of Theorem \ref{cor:pme_l1} by applying the averaging lemmata obtained in Section \ref{AvLem} to generalized kinetic solutions and to entropy solutions to \eqref{pme_sys}.
The idea is in essence to interpolate the outcomes of Lemma \ref{lem:av} and Lemma \ref{lem:av3} via Lemma \ref{lem:interpol}.

\begin{proof}[Proof of Theorem \ref{cor:pme_l1}]
 We extend $\f$ to all times $t\in \R$ by multiplying with a smooth cut-off function $\vp\in \CRci(0,T)$ with $0\le \vp\le 1$ and $\|\vp'\|_{\LR{1}_t}\le 2$.
 We write $[\nu](t,x,v):=\nu(t,x,(-\infty,v])$ and $\qq:=q+[\nu]S$ as in Remark \ref{eq:kinetic_form_2}.
 This gives rise to the distributional equation
  \begin{align}
   \partial_t(\vp\f)-\nonl'L_x(\vp\f) &= \partial_\vf (\vp \qq) + \vp' f.
  \end{align}
 Set $g_0:= \vp' f$ and $g_1:=\vp \qq$.
 Since $f$ is a generalized kinetic solution, we have $\vp\f\in \LR{\infty}(\R_t\times \R^d_x\times \R_v)$ with $\|\f\|_{\LR{\infty}_{t,x,v}}\le 1$ as well as $|g_0|\in \calm_{TV}(\R_t\times\R^d_x\times\R_v)$, and by \eqref{eq:new_q_form_2} we have $g_1\in \LR{\infty}(\R_{v};\calm_{TV}(\R_{t}\times\R_{x}^{d}))$.
 Therefore we are in the situation of Lemma \ref{lem:av} and Lemma \ref{lem:av3} with $p(x,\xi)$ given by \eqref{fourier-repr-ker}.
 
 \newcounter{cor:pme_l1_prf} 
  \refstepcounter{cor:pme_l1_prf} 
  \textit{Step} \arabic{cor:pme_l1_prf}\label{cor:pme_l1_prf_st1}. In this step we establish for $s_x\in (0,\frac{\frex}{m})$ the bound
  \begin{align}\label{js02}
  \norm{\vp u^\eta}_{S^{(0,s_x)}_{m,\infty}B}^m\lesssim (1+\|\eta'\|_{\LR{1}_v})(\|u_0\|_{L^{1}_{x}}^m + \|S\|_{L^1_{t,x}}^m
  + 1) + \|\eta\nonl' \vp f\|_{\LR{1}_{t,x,v}}.
 \end{align}
 By Lemma \ref{lem:av}, we have for any $s_x\in(0,\frac{\frex}{m})$
 \begin{align}\label{eq:bound_1}
 \begin{split}
  \norm{\vp u^\eta}_{S^{(0,s_x)}_{m,\infty}B}^m
  &\lesssim \|g_{0}\|_{\calm_{TV}}+(1+\|\eta'\|_{\LR{1}_v})\|g_{1}\|_{\LR{\infty}_v\calm_{TV}} \\
  &\quad + \|\vp u\|_{\LR{\infty}_t\LR{1}_{x}\cap\LR{1}_{t,x}}^m + \|\vp u\|_{\LR{1}_{t,x}} + \|\eta\nonl'\vp f\|_{\LR{1}_{t,x,v}}.
 \end{split}
 \end{align}
 Since $[0,T]$ is compact, we have by Lemma \ref{lem:ph-est-1a}
\begin{align*}
 \|\vp u\|_{\LR{\infty}_t\LR{1}_{x}\cap\LR{1}_{t,x}}\lesssim \|\vp u\|_{\LR{\infty}_t\LR{1}_{x}} \lesssim \|u_0\|_{L^{1}_{x}}+ \|S\|_{L^1_{t,x}},
\end{align*}
and similarly
 \begin{align*}
 \|g_0\|_{\calm_{TV}}&=\|\vp' f \|_{\calm_{TV}} \lesssim   \|u\|_{L^\infty_{t} L^1_{x}} \|\vp'\|_{L^1_t} \le 2\|f\|_{L^\infty_{t} L^1_{x,v}}\lesssim \|u_0\|_{L^{1}_{x}}+ \|S\|_{L^1_{t,x}}.
\end{align*}
This estimates the first, the third, and the fourth term on the right-hand side of \eqref{eq:bound_1}. For the second term we utilize \eqref{eq:new_q_form_2} and Lemma \ref{lem:ph-est-1}, and obtain
\begin{align*}
 \|g_1\|_{\LR{\infty}_v\calm_{TV}}
 \lesssim \|\qq\|_{\LR{\infty}_v\calm_{TV}}
 \lesssim \|q\|_{\LR{\infty}_v\calm_{TV}}+\|S\|_{\LR{1}_{t,x}}
 \lesssim \|u_0\|_{L^{1}_{x}}+\|S\|_{L^1_{t,x}}.
\end{align*}
Hence, \eqref{js02} follows since $\|u_0\|_{L^{1}_{x}}+ \|S\|_{L^1_{t,x}} + (\|u_0\|_{L^{1}_{x}}+ \|S\|_{L^1_{t,x}})^m\lesssim \|u_0\|_{L^{1}_{x}}^m+ \|S\|_{L^1_{t,x}}^m + 1$.
 
 \refstepcounter{cor:pme_l1_prf} 
  \textit{Step} \arabic{cor:pme_l1_prf}\label{cor:pme_l1_prf_st2}. In this step we establish the bound
  \begin{align}\label{js03}
  \norm{\vp u^\eta}_{S^{(1,0)}_{1,\infty}B}\lesssim (1+\|\eta'\|_{\LR{1}_v})(\|u_0\|_{L^{1}_{x}}+ \|S\|_{L^1_{t,x}})
  + \|\eta\nonl' \vp f\|_{\LR{1}_{t,x,v}}.
 \end{align}
 Indeed, by Lemma \ref{lem:av3}, we have
 \begin{align*}
  \norm{\vp u^\eta}_{S^{(1,0)}_{1,\infty}B}\lesssim \|g_{0}\|_{\calm_{TV}} +(1+\|\eta'\|_{\LR{1}_v})\|g_{1}\|_{\LR{\infty}_v\calm_{TV}} + \|\vp u\|_{\LR{1}_{t,x}} + \|\eta\nonl'\vp f\|_{\LR{1}_{t,x,v}},
 \end{align*}
 and the first three terms on the right-hand side are estimated as in Step \ref{cor:pme_l1_prf_st1}. 
  
   \refstepcounter{cor:pme_l1_prf} 
  \textit{Step} \arabic{cor:pme_l1_prf}\label{cor:pme_l1_prf_st3}. In this step we obtain the desired regularity by interpolation.
  That is, we will show for $p\in (1,m)$
  \begin{align}\label{cor:ome_l1_e1}
       \|\varphi u^\eta\|_{\WSR{\sigma_t}{p}(\R;\WSR{\sigma_x}{p}(\R^d))}^p
       &\lesssim (1+\|\eta'\|_{\LR{1}_v})(\|u_0\|_{L^{1}_{x}}^m + \|S\|_{L^1_{t,x}}^m + 1) +  \|\eta\nonl' \vp f\|_{\LR{1}_{t,x,v}}.
  \end{align}
   Note that $S^{\overline{r}}_{p,\infty}B\subset S^{\overline{r}-\overline{\eps}}_{p,1}B$ for
   $\overline{\eps}\in (0,\infty)^2$ by Remark \ref{rem:spaces}.
   Using Lemma \ref{lem:interpol} with $\overline{r}_0=(1-\eps,-\eps)$, $\overline{r}_1=(-\eps,\frac{\frex}{m}-\eps)$, $p_0=1$, $p_1=m$, $q_0=q_1=1$ and $\theta=\frac{p-1}{p}\frac{m}{m-1}$, so that $\overline{r}_\theta=(\kappa_t-\eps, \kappa_x-\eps)$, $p_\theta=p$ and $q_\theta=1$, we obtain with $s_x\in (\frac{\frex}{m}-\eps,\frac{\frex}{m})$
   \begin{align*}
       \|\varphi u^\eta\|_{S^{(\kappa_t-\eps,\kappa_x-\eps)}_{p,1}B}^p&\lesssim \|\varphi u^\eta\|_{S^{(-\eps,\frac{\frex}{m}-\eps)}_{m,1}B}^{p(1-\theta)}\|\varphi u^\eta\|_{S^{(1-\eps,-\eps)}_{1,1}B}^{p\theta}
       \lesssim \|\varphi u^\eta\|_{S^{(0,s_x)}_{m,\infty}B}^{p(1-\theta)}\|\varphi u^\eta\|_{S^{(1,0)}_{1,\infty}B}^{p\theta} \\
       &\lesssim (1+\|\eta'\|_{\LR{1}_v})(\|u_0\|_{L^{1}_{x}}^m + \|S\|_{L^1_{t,x}}^m + 1) +  \|\eta\nonl' \vp f\|_{\LR{1}_{t,x,v}}.
   \end{align*}
   Here we have used $c^{p(1-\theta)}(c^\frac1m)^{p\theta}=c$ with $c:=(1+\|\eta'\|_{\LR{1}_v})(\|u_0\|_{L^{1}_{x}}^m + \|S\|_{L^1_{t,x}}^m + 1) +  \|\eta\nonl' \vp f\|_{\LR{1}_{t,x,v}}$.
   Since \[S^{(r_t,r_x)}_{p,1}B\subset B^{r_t-\eps}_{p,p}(\R;B^{r_x-\eps}_{p,p}(\R^d_x))= \WSR{r_t-\eps}{p}(\R;\WSR{r_x-\eps}{p}(\R^d))\] for $r_t,r_x>\eps>0$, see Proposition 2.2.3/2 and Remark 2.3.4/4 in \cite{ScT87}, we obtain the claimed estimate \eqref{cor:ome_l1_e1}.
   
   \medskip
   
 We now show an additional estimate in the case $p=m$.
 Choose $\eps\in (0,m)$ sufficiently small such that for $\tilde p:=m-\eps$ with corresponding $\tilde\kappa_x:=\frac{\tilde p-1}{\tilde p}\frac{\frex}{m-1}$ there holds $\WSR{\tilde\sigma_x}{\td p}(\R^d)\subset \WSR{\sigma_x}{m}(\R^d)$ for some $\tilde\sigma_x\in (0,\tilde\kappa_x)$.
 Since also
 \begin{align*}
     \tilde\kappa_t - \frac{1}{\tilde p}>-\frac{1}{m} \ \Leftrightarrow \ \frac{\eps}{m-\eps}\frac{1}{m-1}-\frac{1}{m-\eps}>-\frac{1}{m} \ \Leftrightarrow \eps>0,
 \end{align*}
 there exists some $\tilde\sigma_t\in (0,\tilde\kappa_t)$ with the same inequality $\tilde\sigma_t - \frac{1}{\tilde p}>-\frac{1}{m}$, and thus we have for any Banach space $E$ the Sobolev embedding $\WSR{\tilde\sigma_t}{\tilde p}(\R;E)\subset \LR{m}(\R;E)$.
 In total we obtain
 \begin{align}
 \begin{split}\label{js02a}
     \norm{\vp u^\eta}_{\LR{m}(\R;\WSR{\sigma_x}{m}(\R^d))}^{\tilde p}&\le \norm{\vp u^\eta}_{\WSR{\tilde\sigma_t}{\tilde p}(\R;\WSR{\tilde\sigma_x}{\tilde p}(\R^d))}^{\tilde p} \\
     &\lesssim (1+\|\eta'\|_{\LR{1}_v})(\|u_0\|_{L^{1}_{x}}^m + \|S\|_{L^1_{t,x}}^m + 1) +  \|\eta\nonl' \vp f\|_{\LR{1}_{t,x,v}}.
 \end{split}
 \end{align} 

  \refstepcounter{cor:pme_l1_prf} 
  \textit{Step} \arabic{cor:pme_l1_prf}\label{cor:pme_l1_prf_st5}. 
  In this step we show that under the additional assumption $f(t,x,v)=\chi(u(t,x),v)$ for a.a. $(t,x,v)$ and $|\nonl|(u)\lesssim |u|+|u|^m$, we have for all $p\in (1,m]$ and $\eps>0$
  \begin{align}\label{js06}
    \|\varphi u^\eta\|_{\WSR{\sigma_t}{p}(\R;\WSR{\sigma_x}{p}(\R^d))}^p
       &\lesssim (1+\|\eta'\|_{\LR{1}_v})^{1+\eps}(\|u_0\|_{L^{1}_{x}}^{m+\eps} + \|S\|_{L^1_{t,x}}^{m+\eps} + 1).
  \end{align}
  Observe that
  \begin{align*}
    \|\eta\nonl' \vp f\|_{\LR{1}_{t,x,v}}= \|\nonl(\vp u^\eta)\|_{\LR{1}_{t,x}} \lesssim \|\vp u^\eta\|_{\LR{1}_{t,x}}+ \|\vp u^\eta\|_{\LR{m}_{t,x}}^m.
  \end{align*}
  By virtue of Lemma \ref{lem:ph-est-1a} we have
  \begin{align*}
   \|\vp u^\eta\|_{\LR{1}_{t,x}}\lesssim \|u^\eta\|_{\LR{\infty}_t\LR{1}_{x}}\lesssim (1+\|\eta'\|_{\LR{1}_v})(\|u_0\|_{\LR{1}_x} + \|S\|_{\LR{1}_{t,x}})\le (1+\|\eta'\|_{\LR{1}_v})(\|u_0\|_{L^{1}_{x}}^{m+\eps} + \|S\|_{L^1_{t,x}}^{m+\eps}+ 1),   
  \end{align*}
  so that by \eqref{cor:ome_l1_e1} (respectively by \eqref{js02a} in the case $p=m$) it suffices to show that $\|\vp u^\eta\|_{\LR{m}_{t,x}}^m\lesssim (1+\|\eta'\|_{\LR{1}_v})(\|u_0\|_{L^{1}_{x}}^{m+\eps} + \|S\|_{L^1_{t,x}}^{m+\eps}+ 1)$.
  For the purpose of this proof, we hence may assume
  \begin{align*}
      (1+\|\eta'\|_{\LR{1}_v})(\|u_0\|_{L^{1}_{x}}^{m+\eps} + \|S\|_{L^1_{t,x}}^{m+\eps}+ 1)\le \|\vp u^\eta\|_{\LR{m}_{t,x}}^m,
  \end{align*}
  since otherwise there is nothing to prove.
  For $s_x\in (0,\frac{\frex}{m})$ we then have by Steps \ref{cor:pme_l1_prf_st1} and \ref{cor:pme_l1_prf_st2}
  \begin{align*}
      \norm{\vp u^\eta}_{S^{0,s_x}_{m,\infty}}^m&\lesssim (1+\|\eta'\|_{\LR{1}_v})(\|u_0\|_{L^{1}_{x}}^m + \|S\|_{L^1_{t,x}}^m + 1) + \|\eta\nonl' \vp f\|_{\LR{1}_{t,x,v}} \lesssim \|\vp u^\eta\|_{\LR{m}_{t,x}}^m. \\
      \norm{\vp u^\eta}_{S^{(1,0)}_{1,\infty}B}&\lesssim (1+\|\eta'\|_{\LR{1}_v})(\|u_0\|_{L^{1}_{x}}+ \|S\|_{L^1_{t,x}}) + \|\eta\nonl' \vp f\|_{\LR{1}_{t,x,v}}\lesssim \|\vp u^\eta\|_{\LR{m}_{t,x}}^m.
  \end{align*}
  We now choose $\eps_x\in (0,s_x)$ and $\theta\in (0,1)$ subject to
  \begin{align*}
      \frac{d(m-1)}{d(m-1)+m(s_x-\eps_x)} < \theta.
  \end{align*}
  Then for $p:=\frac{m}{m(1-\theta)+\theta}\in (1,m)$ we have $\frac1{p}=1-\theta+\frac{\theta}{m}$.
  Thus, for $\eps_{t}\in (0,1-\theta)$ and $\overline{r}:=(1-\theta-\eps_{t},\theta (s_x -\eps_x))$ we have as in Steps \ref{cor:pme_l1_prf_st3} and \ref{cor:pme_l1_prf_st4}
  \begin{align*}
      \|\vp u^\eta\|_{\WSR{r_t}{p}(\R;\WSR{r_x}{p}(\R^d))} \lesssim \|\vp u^\eta\|_{S^{(1,0)}_{1,\infty}B}^{1-\theta}\|\vp u^\eta\|_{S^{(0,s_x)}_{m,\infty}B}^\theta\lesssim \|\vp u^\eta\|_{\LR{m}_{t,x}}^{m(1-\theta)+\theta}.
  \end{align*}
  Next we set $p^*:=\frac{dp}{d-p r_x}$.
  Choosing $\eps_x\in (0,s_x)$ sufficiently close to $s_x$ and hence $r_x$ sufficiently small, we can always guarantee that $p^*<\infty$.
  Thus, applying the Sobolev embedding in $x$ and a trivial embedding in $t$, we obtain
  \begin{align*}
      \|\vp u^\eta\|_{\LR{p}_t\LR{p^*}_x}\lesssim \|\vp u^\eta\|_{\WSR{r_t}{p}(\R;\WSR{r_x}{p}(\R^d))}\lesssim \|\vp u^\eta\|_{\LR{m}_{t,x}}^{m(1-\theta)+\theta}.
  \end{align*}
  Moreover $p^*\in (m,\infty)$ since
  \begin{align*}
   p^*>m \quad \Leftrightarrow \quad \frac{d(m-1)}{d(m-1)+m(s_x-\eps_x)} < \theta. 
  \end{align*}
  Hence, choosing $\tau:=\frac{m-1}{m}\frac{p^*}{p^*-1}\in (0,1)$ and $m^*:=\tau m$, we have
  \begin{align*}
      \frac1m=\frac{1-\tau}{\infty}+\frac{\tau}{m^*}, \quad \text{and} \quad \frac1m=\frac{1-\tau}{1}+\frac{\tau}{p^*},
  \end{align*}
  so that
  \begin{align*}
      \|\vp u^\eta\|_{\LR{m}_{t,x}}^m\lesssim \|\vp u^\eta\|_{\LR{\infty}_t\LR{1}_x}^{(1-\tau)m}\|\vp u^\eta\|_{\LR{m^*}_t\LR{p^*}_x}^{\tau m}
  \end{align*}
  Since
  \begin{align*}
      p>m^* \quad \Leftrightarrow \quad \frac{1}{m-\theta(m-1)}>\frac{1}{\theta+\frac{\theta m}{d(m-1)}(s_x-\eps_x)}
  \end{align*}
  and the right condition is fulfilled as $\theta\nearrow 1$, we find a sufficiently large $\theta<1$ such that $p>m^*$.
  Thus
  \begin{align*}
      \|\vp u^\eta\|_{\LR{m}_{t,x}}^m\lesssim \|\vp u^\eta\|_{\LR{\infty}_t\LR{1}_x}^{(1-\tau)m}\|\vp u^\eta\|_{\LR{p}_t\LR{p^*}_x}^{\tau m} \lesssim \|\vp u^\eta\|_{\LR{\infty}_t\LR{1}_x}^{(1-\tau)m}\|\vp u^\eta\|_{\LR{m}_{t,x}}^{(m(1-\theta)+\theta)\tau m}.
  \end{align*}
  Since $p>m^*=\tau m$ is equivalent to $m>(m(1-\theta)+\theta)\tau m$, this implies
  \begin{align*}
      \|\vp u^\eta\|_{\LR{m}_{t,x}}\lesssim \|\vp u^{\eta}\|_{\LR{\infty}_t\LR{1}_x}^{\frac{1-\tau}{1-(m(1-\theta)+\theta)\tau}}\lesssim \left((1+\|\eta'\|_{\LR{1}_v})(\|u_0\|_{\LR{1}_x}+\|S\|_{\LR{1}_{t,x}})\right)^{\frac{1-\tau}{1-(m(1-\theta)+\theta)\tau}}.
  \end{align*}
  Now ${\frac{1-\tau}{1-(m(1-\theta)+\theta)\tau}}\searrow 1$ as $\theta\nearrow 1$, so that for any $\eps>0$ we find $\theta$ such that the exponent is bounded by $1+\eps$.
  This shows \eqref{js06}.
  
  \refstepcounter{cor:pme_l1_prf} 
  \textit{Step} \arabic{cor:pme_l1_prf}\label{cor:pme_l1_prf_st4}. In this step we remove the localization in time.
  Let $\vp_n(t)=\psi(nt)-\psi(nt-T/2)$, where $\psi\in\CRi(\R)$ with $0\le \psi\le 1$, $\supp\psi\subset(0,\infty)$, $\psi(t)=1$ for $t>T/2$ and $\|\psi'\|_{L^1}=1$. For $n\to\infty$, $\vp_n$ converges to $1_{[0,T]}$ in the supremum norm.
  Hence, we may conclude \eqref{cor:pme_est1_l1} from \eqref{cor:ome_l1_e1} by lower semi-continuity.
  
  \medskip
  
  If we additionally we choose $\eta=1$ on $B_1(0)$, $\supp \eta\subset B_2(0)$ and $0\le \eta\le 1$ and define $\eta_R(v):=\eta(v/R)$, then $\eta_R\to 1$ and $\|\eta_R'\|_{\LR{1}_v}\lesssim 1$ for $R\to \infty$.
  Therefore we may also conclude \eqref{cor:pme_est1_l1_2} from \eqref{js06} by lower semi-continuity.
\end{proof}

\section{Existence of generalized kinetic solutions}

In this section we prove Theorem \ref{prop:wp-kinetic} on the existence of generalized kinetic solutions.

\begin{proof}[Proof of Theorem \ref{prop:wp-kinetic}]\textit{Step 1: Viscous approximation. }
   In this step, we consider the ${\varepsilon_1} \to0$ limit for $\varepsilon_2,\varepsilon_3>0$ fixed and $u_0$, $S$, bounded smooth of solutions to the approximate equation
    \begin{equation}\label{eq:approx}
    \partial_{t}u^{{\varepsilon_1,\varepsilon_2,\varepsilon_3}  }+L^{\varepsilon_1} \nonl^{\varepsilon_3}(u^{{\varepsilon_1,\varepsilon_2,\varepsilon_3}  }) = \varepsilon_2 \Delta  u^{{\varepsilon_1,\varepsilon_2,\varepsilon_3}  }+S \text{ on }(0,T)\times\R_{x}^{d},
    \end{equation}
    with $u^{{\varepsilon_1,\varepsilon_2,\varepsilon_3}  }(0)=u_0$, and $\nonl^{\varepsilon_3}(u):=\nonl(u)+{\varepsilon_3} u$.
    Here, $L^{\varepsilon_1} $ is defined from $L$ by regularizing the kernel $k$, so that $L^{\varepsilon_1} $ becomes a regular convolution operator. 
    
    Since $\varepsilon_2,\varepsilon_3>0$ are fixed in this step we drop them in the notation. 
    For each ${\varepsilon_1} >0$, by \cite{L1996} there is a unique solution to \eqref{eq:approx}. Following the same arguments as the ones leading to Definition \ref{def:kinetic_sol-1}, the kinetic function $f^{{\varepsilon_1} }(t,x,v):=\chi(u^{{\varepsilon_1} }(t,x),v)$ satisfies, in the sense of distributions, the kinetic form
    \begin{equation}\label{eqn:approx_kinetic}
      \partial_{t}f^{{\varepsilon_1} }+\nonl'(v)L^{{\varepsilon_1} }f^{{\varepsilon_1} }= \varepsilon_2 \Delta   f^{{\varepsilon_1} }+\partial_{v}q^{{\varepsilon_1} }+\delta_{v=u^{{\varepsilon_1} }}S
    \end{equation}
    on $(0,T)\times\R_{x}^{d}\times\R_{v}$, where $q^{{\varepsilon_1} }=m^{{\varepsilon_1} }+n^{{\varepsilon_1} }$, $n^{{\varepsilon_1} }$ is given by \eqref{nonl-dis-eq} and $m^{{\varepsilon_1} } =\varepsilon_2 \delta_{v=u^{{\varepsilon_1} }}|\nabla u^{{\varepsilon_1} }|^2$.     Moreover, by direct computation
    \begin{equation}\begin{split}\label{eq:f_eps_del}
        |f^{{\varepsilon_1} }(t,x,v)|=\sgn(v)f^{{\varepsilon_1} }(t,x,v) &\le 1, \\
        \partial_v f^{{\varepsilon_1} }=\delta_{v=0}-\delta_{v=u^{{\varepsilon_1} }}.
    \end{split}\end{equation}
    Choosing, as in \cite[Equation (2.7)]{ChP03}, $s(u)=(u-v)_+$ for $v\ge 0$ and $s(u)=(u-v)_-$ for $v \le 0$, and a standard approximation argument implies \eqref{eq:decay_dissipation_measures}, for $q^{{\varepsilon_1} }$ with the uniform bound
    \begin{equation}\label{eq:mu}
      \mu(v) = 1_{v>0}\|(u_0-v)_+\|_{L^1(\R^d)}+1_{v<0}\|(u_0-v)_-\|_{L^1(\R^d)}.
    \end{equation}

    By a standard approximation argument, the entropy inequalities
    \eqref{entropy-sol-ineq} imply the uniform $L^p$ estimates, for each $p\ge 1$,
    \begin{equation}\begin{split}\label{eq:lp}
      \sup_{t\in[0,T]}\|u^{{\varepsilon_1} }(t)\|_{L^p(\R^d_x)}^p + (p-1) \int_0^T\int_{\R^d} \int_\R v^{p-2} n^{\varepsilon_1} (t,x,v) \dd v \dd x \dd t \\
      \lesssim \|u_0\|_{L^p(\R^d_x)}^p + \|S\|_{L^p([0,T]\times \R^d_x)}^p,
    \end{split}\end{equation}
    and
    \begin{equation}\label{eq:h1}
     \varepsilon_2\int_0^T \int_{\R^d_x} |\nabla u^{{\varepsilon_1} }(t)|^2 dxdt \lesssim \|u_0\|_{L^2(\R^d_x)}^2 + \|S\|_{L^2([0,T]\times \R^d_x)}^2.
    \end{equation}
    
    Again arguing via an approximation argument, for two solutions $u^{1,{\varepsilon_1} }, u^{2,{\varepsilon_1} }$ to \eqref{eq:approx}  with smooth data $u^{1}_0$, $u^{2}_0$ and $S^1$, $S^2$ respectively,  we have that
    \begin{equation}\label{eq:l1_approx}
       \esssup_{t\in[0,T]}\|u^{1,{\varepsilon_1} }(t)-u^{2,{\varepsilon_1} }(t)\|_{L^1_x} \le \|u^{1}_0-u^{2}_0\|_{L^1_x} + \|S^{1}-S^{2}\|_{L^1_{t,x}}.
    \end{equation}

    Using \eqref{eq:h1}, the Aubin-Lions-Simon lemma \cite{Si1986} implies that there is a subsequence (not relabelled) so that  $u^{{\varepsilon_1} } \to u$ in $L^1_tL^1_x$ and $f^{{\varepsilon_1} } \rightharpoonup^* f$ in $L^\infty_{t,x,v}$.
    Due to \eqref{eq:mu} we have $q^{{\varepsilon_1} } \rightharpoonup^* q$ for some $q\ge 0$. The strong convergence of $u^{{\varepsilon_1} }$ implies that for a.e.\ $(t,x)$, for all $v\in\R_v$ we have that $f=\chi(u,v)$, and that \eqref{eq:l1_approx} is satisfied for the respective limits $u^{1}$, $u^{2}$.
    This allows to pass to the limit in \eqref{eq:f_eps_del} to deduce the analogous properties for $f$. 
    
    Taking the limit in \eqref{eq:decay_dissipation_measures} for $q^{{\varepsilon_1} }$ with \eqref{eq:mu}, yields  \eqref{eq:decay_dissipation_measures}  for $q$ with \eqref{eq:mu}. Moreover, using Fatou's Lemma,    with $n$ defined as in  \eqref{nonl-dis-eq}, we have
    \begin{equation}\label{eqn:approx_kinetic_measure_bound}
      q \ge n.
    \end{equation}
    
      Since, for smooth test functions $L^{\varepsilon_1} \varphi$, $L\varphi$ are uniformly bounded and $L^{\varepsilon_1} \varphi \to  L\varphi$ pointwise, we can pass to the limit in \eqref{eqn:approx_kinetic} to obtain that 
    \begin{equation}\label{eqn:approx_kinetic_2}
      \partial_{t}f+\nonl'(v)Lf=\varepsilon_2 \Delta f+\partial_{v}q+\delta_{v=u}S.
    \end{equation}
    In conclusion, $f$ is a generalized kinetic solution to 
    \begin{equation}\label{eq:approx_2}
    \partial_{t}u+L \nonl(u) = \varepsilon_2 \Delta u+S \text{ on }(0,T)\times\R_{x}^{d}.
    \end{equation}

        \textit{Step 2: Vanishing viscosity. } In this step, we consider the limit $\varepsilon_2 \to 0$.
        Since $\varepsilon_3$ is fixed we drop it in the notation.
        Due to $(\nonl^{\varepsilon_3})'\ge {\varepsilon_3} $ we have that
\begin{equation}\begin{split}\label{nonl-dis-eq-2}
n^{\varepsilon_2}(t,x,v)
&=\int_{\R^d} |\nonl^{\varepsilon_3}(u(t,x+y))-\nonl^{\varepsilon_3}(v)| \1_{\mathrm{conv}\set{\tau_y u(t,x),u(t,x)}}(v) k(x,y) \dd y \\
&\ge {\varepsilon_3}  \int_{\R^d} |u(t,x+y)-v| \1_{\mathrm{conv}\set{\tau_y u(t,x),u(t,x)}}(v) k(x,y) \dd y,
\end{split}\end{equation}
and from \eqref{eq:lp} with $p=2$, it follows that
\begin{equation}\begin{split}
  {\varepsilon_3}  &\|u\|_{L^2([0,T];W^{a/2,2}_x)}^2 \\
 & \lesssim 
 \int_0^T\int_{\R^d} \int_\R \int_{\R^d} {\varepsilon_3}  |u(t,x+y)-v| \1_{\mathrm{conv}\set{\tau_y u(t,x),u(t,x)}}(v) k(x,y) \dd y  \dd v \dd x \dd t \\
 &\le \|u_0\|_{L^2(\R^d_x)}^2 + \|S\|_{L^2([0,T]\times \R^d_x)}^2.
\end{split}\end{equation}
Using the Aubin-Lions-Simon lemma \cite{Si1986} we can extract $u^{{\varepsilon_2} } \to u$ in $L^1_tL^1_{x,\textrm{loc}}$ and $f^{{\varepsilon_2} } \rightharpoonup^* f$ in $L^\infty_{t,x,v}$. We may then argue as in Step 1 to take the limit $\varepsilon_2 \to 0$ to obtain a generalized kinetic solution to
\begin{equation}\label{eq:approx_3}
    \partial_{t}u+L \nonl (u) = S \text{ on }(0,T)\times\R_{x}^{d}.
    \end{equation}    
Notably, the estimates \eqref{eq:lp} and \eqref{eq:l1_approx} carry over to these limits.

    \textit{Step 3: Vanishing nonlocal viscosity. }
In this step, we consider the $\varepsilon_3\to0$ limit for $u_0$, $S$, bounded smooth, of the solutions $u^{\varepsilon_3}$ to \eqref{eq:approx_3} constructed in Step 2.
Compared to the ${\varepsilon_1} \to0$ limit of Step 1, in order to pass to the limit $\varepsilon_3\to0$ we need a uniform in $\varepsilon_3$ estimate replacing \eqref{eq:h1}.
For this we invoke Theorem \ref{cor:pme_l1} after localizing $f^{\varepsilon_3}$.

We note that $\nonl^{\eps_3}$ fulfills \eqref{ass_nonl_1} with a uniform constant in $\eps_3>0$, since $\set{\nonl'(v)+\eps_3\le \delta}\subset \set{\nonl'(v)\le \delta}$ and $\#\set{\nonl'(v)+\eps_3 =  \delta} = \#\set{\nonl'(v) = \delta-\eps_3} \le C$.

Let $\eta \in C^\infty_c (\R_v)$ be a cutoff function with $|\eta|\le 1$. Then, by Theorem \ref{cor:pme_l1},
$u^{\eta,\eps_3}=\int_v \eta f^{\eps_3}$ satisfies
     \begin{align*}
      \|u^{\eta,\eps_3}\|_{\WSR{\sigma_t}{p}(0,T;\WSR{\sigma_x}{p}(\R^d))}
      &\lesssim (1+\|\eta'\|_{\LR{1}_v})(\|u_0\|_{L^1_{x}}^m +\|S\|_{L^1_{t,x}}^m + 1) +\|\eta\nonl' f^{\eps_3}\|_{\LR{1}_{t,x,v}}\\
      &\lesssim (1+\|\eta'\|_{\LR{1}_v})(\|u_0\|_{L^1_{x}}^m +\|S\|_{L^1_{t,x}}^m + 1) +\|\nonl'\|_{L^\infty(\supp \eta)}\|f^{\eps_3}\|_{\LR{1}_{t,x,v}}\\
      &\lesssim (1+\|\eta'\|_{\LR{1}_v})(1+\|\nonl'\|_{L^\infty(\supp \eta)})(\|u_0\|_{L^1_{x}}^m +\|S\|_{L^1_{t,x}}^m + 1).
     \end{align*}
    The compactness of $\WSR{\sigma_t}{p}(0,T;\WSR{\sigma_x}{p}(\R^d)) \hookrightarrow L^p_tL^p_x$ implies that for each $\eta$ we can choose an a.e. convergent subsequence of $u^{\eta,\eps_3}$.
    Since $\eta$ is arbitrary, and $u^{\eps_3}$ is uniformly integrable, a diagonal argument implies the existence of an a.e.\ convergent subsequence of $u^{\eta,\eps_3} \to u$.
    This allows to pass to the $\eps_3\to 0$ limit along the lines of Step 1.

    \textit{Step 4: General data.}  Let $u_0^n$, $S^n$ be bounded and smooth approximations of $u_0$, $S$, and let $u^n$ be a corresponding solution constructed in the previous step.
    Then, \eqref{eq:l1_approx} implies
      $$\esssup_{t\in[0,T]}\|u^{n_1}(t)-u^{n_2}(t)\|_{L^1_x} \lesssim \|u^{n_1}_0-u^{n_2}_0\|_{L^1_x} + \|S^{n_1}-S^{n_2}\|_{L^1_{t,x}}, $$
    and, thus,  $u^{n}$ is a Cauchy sequence in $L^\infty_tL^1_x$. This implies that there is an a.s. convergent subsequence $u^{n} \to u$. The proof can then be concluded as in Step 1.
\end{proof}


\noindent
\thanks{\textbf{Acknowledgment.}
BG acknowledges support by the Max Planck Society through the Research Group ”Stochastic Analysis in the Sciences (SAiS)”. This work was co-funded by the European Union (ERC, FluCo, grant agreement No. 101088488). Views and opinions expressed are however those of the author(s) only and do not necessarily reflect those of the European Union or of the European Research Council. Neither the European Union nor the granting authority can be held responsible for them.
}


\frenchspacing

\end{document}